\newtheorem{theo}{Theorem}
\newtheorem{lemma}[theo]{Lemma}
\newtheorem{cor}[theo]{Corollary}
\newtheorem{prop}[theo]{Proposition}
\newtheorem{defi}[theo]{Definition}
\newtheorem{conj}[theo]{Conjecture}
\newtheorem{remark}[theo]{Remark}
\newcommand{\bC}{{\mathbb{C}}}
\newcommand{\bR}{{\mathbb{R}}}
\newcommand{\calA}{{\mathcal{A}}}
\newcommand{\calB}{{\mathcal{B}}}
\newcommand{\calE}{{\mathcal{E}}}
\newcommand{\calU}{{\mathcal{U}}}
\newcommand{\calV}{{\mathcal{V}}}
\def\e{{\epsilon}}
\def\a{{\alpha}}
\def\b{{\beta}}
\newcommand{\B}{\mathbb{B}}
\newcommand{\BS}{\mathbb{S}}
\newcommand{\BD}{\mathbb{D}}
\begin{document}
\title{VANISHING POLYHEDRON AND COLLAPSING MAP}
\author{L\^e D\~ung Tr\'ang}
\author{Aur\'elio Menegon Neto}

\thanks{Partial support from CNPq (Brazil).}
%\keywords{Milnor fiber, boundary, vanishing zone, L\^e's polyhedron, non-isolated singularity, real singularity, collapsing}
%\subjclass[2000]{Primary 14J17, 14B05, 32S05, 32S25, 32S30, 32S45. Secondary 14P15, 32C05.}
\date{18-11-2015}
\address{L\^e D\~ung Tr\'ang: Universit\'e Aix-Marseille - France.}
\email{ledt@ictp.it}
\address{Aur\'elio Menegon Neto: Universidade Federal da Para\'iba - Brazil.}
\email{aurelio@mat.ufpb.br}

%%%
\begin{abstract}
In this paper we give a detailed proof that the Milnor fiber $X_t$ of an analytic complex isolated singularity function defined on a reduced $n$-equidimensional analytic complex space $X$ is a regular neighborhood of a polyhedron $P_t \subset X_t$ of real dimension $n-1$. Moreover, we describe the degeneration of $X_t$ onto the special fiber $X_0$, by giving a continuous collapsing map $\Psi_t: X_t \to X_0$ which sends $P_t$ to $\{0\}$ and which restricts to a homeomorphism $X_t \backslash P_t \to X_0 \backslash \{0\}$.
\end{abstract}

\maketitle

%%%%%%%%%%%%%%%%%%%%%%%%%%%%%%%%%
\section*{Introduction}

Let $f: (X,x) \to (\bC,0)$ be a germ of complex analytic function $f$ at a point $x$ of a reduced equidimensional complex analytic space $X \subset \bC^N$ (with arbitrary singularity). In \cite{Le3} the first author proved that there exist sufficiently small positive real numbers $\e$ and $\eta$ with $0<\eta \ll \e \ll 1$ such that the restriction:
$$f_|: \B_\e(x) \cap X \cap f^{-1}(\BD_\eta^*) \to \BD_\eta^*$$
is a locally trivial topological fibration, where $\B_\e(x)$ is the ball around $x \in \bC^N$ with radius $\e$, $\BD_\eta$ is the disk around $0 \in \bC$ with radius $\eta$ and $\BD_\eta^* := \BD_\eta \backslash \{0\}$.

\medskip
For any $t \in \BD_\eta^*$ the topology of $X_t := \B_\e(x) \cap X \cap f^{-1}(t)$ does not depend on $\e$ small enough (see Theorem 2.3.1 of \cite{LT}). We call $X_t$ the Milnor fiber of $f$, with boundary $\partial X_t := X_t \cap \BS_\e(x)$. Also set $X_0 := \B_\e(x) \cap X \cap f^{-1}(0)$.

\medskip
Let ${\mathcal S} = (S_\a)_{\a \in A}$ be a Whitney stratification of $X$ (\cite{Wh}, \cite{Ma}). According to Definition 1.1 of \cite{Le5}, we say that $f: (X,x) \to (\bC,0)$ has an isolated singularity at $x \in X$ in the stratified sense, i.e., relatively to the Whitney stratification ${\mathcal S}$, if there is an open neighborhood $U$ of $x$ in $X$ such that the restriction of $f$ to each stratum $S_\a \cap U$ that does not contain $x$ is a submersion and if the restriction of $f$ to the stratum $S_{\a(x)} \cap U$ that contains $x$ has an isolated singularity at $x$.

\medskip
The first author sketched a proof of the following theorem in \cite{Le4}:

\begin{theo} \label{theo_main}
Let $X \subset \bC^N$ be a reduced equidimensional complex analytic space and let ${\mathcal S} = (S_\a)_{\a \in A}$ be a Whitney stratification of $X$. Let $f: (X,x) \to (\bC,0)$ be a germ of complex analytic function $f$ at a point $x \in X$. If $f$ has an isolated singularity at $x$ relatively to ${\mathcal S}$ and if $\e$ and $\eta$ are sufficiently small positive real numbers as above, then for each $t \in \BD_\eta^*$ there exist:
\begin{itemize}
\item[$(i)$] a polyhedron $P_t$ of real dimension $\dim_\bC X_t$ in the Milnor fiber $X_t$, compatible with the Whitney stratification $\mathcal{S}$, and a continuous simplicial map: 
$$\tilde{\xi}_t: \partial X_t \to P_t$$
compatible with $\mathcal{S}$, such that $X_t$ is homeomorphic to the mapping cylinder of $\tilde{\xi}_t$;

\medskip
\item[$(ii)$] a continuous map $\Psi_t: X_t \to X_0$ that sends $P_t$ to $\{0\}$ and that restricts to a homeomorphism $X_t \backslash P_t \to X_0 \backslash \{0\}$.
\end{itemize}
\end{theo}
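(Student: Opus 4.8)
The plan is to build the polyhedron $P_t$ and the collapsing map $\Psi_t$ out of a relative polar curve / Lefschetz pencil construction applied to a generic linear form. Concretely, I would choose a generic linear function $\ell: \bC^N \to \bC$ and study the map $\Phi = (\ell, f): X \to \bC^2$. Because $f$ has an isolated singularity relative to $\mathcal{S}$, for generic $\ell$ the restriction of $\ell$ to $X_0 = f^{-1}(0)$ defines a singularity that is itself isolated in the stratified sense, and the polar locus $\Gamma$ of $\Phi$ (the closure of the critical locus of $\Phi$ on the strata not through $x$, together with the critical locus inside $S_{\alpha(x)}$) is a curve. The key geometric input is the local structure of $\Phi$ near $x$: away from the discriminant $\Delta = \Phi(\Gamma)$, $\Phi$ is a stratified submersion, so one gets a locally trivial fibration over $\bC^2 \setminus \Delta$, and the degenerations of the fibers as one approaches $\Delta$ are controlled by the transversal Milnor fibers of the $\ell|_{S_\alpha}$.

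**The first main step** is to slice: write $X_t = (\ell, f)^{-1}(\{f = t\} \cap \B_\e)$ and foliate it by the curves $X_{u,t} = \ell^{-1}(u) \cap X_t$ for $u$ in a small disk $\BD_\rho$. For $t$ fixed and $u$ generic these are (complex) hypersurface sections of the Milnor fiber; as $u$ varies they undergo vanishing-cycle degenerations over the finitely many points $u$ where $\{\ell = u, f = t\}$ meets $\Gamma$. The polyhedron $P_t$ is assembled inductively on $\dim_\bC X$: in $X_{u,t}$ one has, by the inductive hypothesis applied to $\ell|_{X_0}$-type data (a function with isolated stratified singularity on a space of one lower dimension), a polyhedron $P_{u,t}$ of dimension $\dim_\bC X_t - 1$, and $P_t$ is obtained as the union $\bigcup_u P_{u,t}$ together with the ``vanishing'' cells that collapse over the special values of $u$; these extra cells, coming from the relative polar curve, raise the dimension by exactly one, giving $\dim_\bR P_t = \dim_\bC X_t$. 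The simplicial collapsing map $\tilde{\xi}_t : \partial X_t \to P_t$ and the mapping-cylinder structure come from the Milnor-fibration retraction in each slice, glued using a triangulation of $\BD_\rho$ adapted to the special values of $u$ and compatible with $\mathcal{S}$; one needs here that $X_t$ retracts onto $P_t$, which follows from the fact that $X_t$ is a regular neighborhood of $P_t$ (curve-selection / local conic structure along the strata).

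**For part $(ii)$**, the collapsing map $\Psi_t: X_t \to X_0$ is built slice by slice as well. Over each value $u \in \BD_\rho$ one has the curve section $X_{u,t} \subset X_t$ degenerating to $X_{u,0} \subset X_0$ as $t \to 0$; on the nonsingular part one uses the Ehresmann trivialization of the fibration $(\ell, f)$ over $\BD_\rho \times \BD_\eta^* \setminus \Delta$ to identify $X_{u,t} \setminus P_{u,t}$ with $X_{u,0} \setminus \{x\}$, and near the polar values the transversal collapsing maps from the inductive step are glued in, sending the vanishing cells of $P_{u,t}$ to the origin. Patching these over $u \in \BD_\rho$ — with the extra polar cells of $P_t \setminus \bigcup_u P_{u,t}$ all mapping to $\{0\}$ — yields a continuous $\Psi_t$ that is a homeomorphism off $P_t$ onto $X_0 \setminus \{0\}$. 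The base case of the induction, $\dim_\bC X = 1$, is explicit: $X_t$ is a finite set of small disks (one per branch, with multiplicity the local intersection number), $P_t$ is a finite set of points, and $\Psi_t$ collapses each disk to the corresponding point of $X_0$.

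**The hard part** will be the gluing across the discriminant: making the slice-wise polyhedra $P_{u,t}$, the retractions, and the collapsing maps fit together continuously as $u$ crosses a polar value, where the topology of the slice $X_{u,t}$ genuinely changes by a vanishing cycle. This requires a careful choice of a ``good'' system of geometric monodromy / vanishing-cell data that is simultaneously compatible with the Whitney stratification $\mathcal{S}$ and varies controllably with $u$ — essentially a relative version, over the parameter disk $\BD_\rho$, of the carousel / zig-zag construction used for plane curves, now carried on the possibly singular ambient space $X$. Ensuring the resulting $P_t$ is an honest polyhedron (not just a CW-complex) and that $\tilde\xi_t$ can be taken simplicial, after suitable subdivision and compatible triangulation of all the data, is where most of the technical work lies; the equidimensionality of $X$ and the isolatedness of the stratified singularity are what keep the polar curve one-dimensional and the induction on dimension running.
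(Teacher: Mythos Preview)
Your overall strategy---generic linear form $\ell$, polar curve $\Gamma$ of $\phi=(\ell,f)$, induction on $\dim_\bC X$ using the hyperplane slice---is exactly the paper's, and you correctly identify that the hard work is in gluing across the discriminant. Two points, however, are genuinely off and would need to be repaired.

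\textbf{The support of $P_t$ and the dimension count.} You describe $P_t$ as $\bigcup_u P_{u,t}$ over $u\in\BD_\rho$, with extra polar cells raising the dimension by one. If $u$ really ranges over the two-real-dimensional disk this gives the wrong dimension, and if the $P_{u,t}$ are meant to collapse to a single slice the notation is misleading. In the paper, $P_t$ lives over a one-dimensional \emph{star} $Q_t=\bigcup_j\delta(y_j(t))\subset D_t$ (simple paths from a base point $\lambda_t$ to the discriminant points $y_j(t)$): it is the union of a single central polyhedron $P_t'\subset\ell_t^{-1}(\lambda_t)$ (the inductive vanishing polyhedron for $f|_{X\cap\{\ell=0\}}$, of real dimension $n-2$) together with \emph{wings} $C_j$ sitting over the paths $\delta(y_j(t))$. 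Each wing is a collapsing cone $P_j$ for $\ell_t$ near the polar point $x_j(t)$, extended by a trivializing vector field along $\delta(y_j(t))$ until it meets $P_t'$. It is this one-parameter sweep over the star that gives real dimension $n-1$; nothing lives over the interior of the sectors of $D_t$.

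\textbf{The collapsing map.} Your slice-by-slice description of $\Psi_t$ has a concrete problem: for $u\neq 0$ the slice $X_{u,0}=\ell^{-1}(u)\cap X_0$ does \emph{not} contain $x$, so ``identify $X_{u,t}\setminus P_{u,t}$ with $X_{u,0}\setminus\{x\}$'' is vacuous there, and for generic $u$ the slices $X_{u,t}$ and $X_{u,0}$ are already homeomorphic with nothing to collapse. The degeneration is not slice-wise in $u$; it happens because the discriminant points $y_j(t)\in D_t$ coalesce to $0\in D_0$ as $t\to 0$. The paper handles this by working over a path $\gamma$ from $t$ to $0$ in the $f$-disk: one builds a \emph{collapsing cone} $P_\gamma\subset X_\gamma=f^{-1}(\gamma)\cap\B_\e$ with $P_\gamma\cap X_t=P_t$ and $P_\gamma\cap X_0=\{x\}$, together with a stratified vector field $E$ on $X_\gamma$ retracting onto $P_\gamma$. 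A second vector field on $X_\gamma\setminus P_\gamma$, lifting the tangent field of $\gamma$ and tangent to the boundaries of a nested system of neighborhoods $\tilde V_A(P_\gamma)$, then flows $X_t\setminus P_t$ homeomorphically onto $X_0\setminus\{x\}$ and extends continuously by $P_t\mapsto\{x\}$. This vector-field machinery (and its rugosity/integrability via Whitney conditions) is the technical core you are gesturing at with ``Ehresmann trivialization'', but it has to be set up over $\gamma$, not over the $\ell$-disk.

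A minor point: in the base case $\dim_\bC X=1$ the Milnor fibre $X_t$ is a finite set of \emph{points}, not small disks; the paper in fact starts the induction at $\dim_\bC X=2$, where $P_t=\ell_t^{-1}(Q_t)$ is already a graph and the picture is explicit.
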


The purpose of this paper is to give a complete and detailed proof of Theorem \ref{theo_main}. This theorem was conjectured by R. Thom in the early 70's when $X$ is smooth.

\medskip

In section \ref{section_polar_curves} we construct the {\it relative polar curve} of $f$, which is the main tool to prove Theorem \ref{theo_main}. In section \ref{section_two_dimensional_case} we prove the Theorem when $X$ is two-dimensional. Then in section \ref{section_main_theorem} we prove the Theorem in the general case, using two propositions (Proposition \ref{prop_a} and Proposition \ref{prop_b}). In section \ref{section_prop_a} we prove those Propositions by finite induction on the dimension of $X$. Finally, in section \ref{section_prop_Et} we make the detailed construction of a vector field (Lemma \ref{prop_Et}) that is used in section \ref{section_prop_a}.

%%%%%%%%%%%%%%%%%%%%%%%%%%%%%%%%%%%%%
\medskip
\section{Polar curves}
\label{section_polar_curves}

The notion of polar curve of a complex analytic function defined on an open neighborhood of $\bC^n$ relatively to a linear form $\ell$ were introduced by B. Teissier and the first author in \cite{Le1} and \cite{Te}. Later, in \cite{Le2} M. Kato and the first author extended this concept to a germ of complex analytic function $f: (X,x) \to (\bC,0)$ relatively to a Whitney stratification ${\mathcal S} = (S_\a)_{\a \in A}$ of a reduced equidimensional complex analytic space $X$. 

\medskip
Notice that by now we are not supposing that $f$ has an isolated singularity in the stratified sense. This hypothesis will be asked after the lemma below.

\medskip
Let $f: X \to \bC$ be a representative of the germ of function at $x$ such that $X$ is closed in an open neighborhood $U$ of $x$ in $\bC^N$. For any linear form:
$$\ell: \bC^N \to \bC $$
the function $f$ and the restriction of $\ell$ to $X$ induce the analytic morphism:
$$\phi_\ell: X \to \bC^2$$
defined by $\phi_\ell(z) = \big( \ell(z), f(z) \big)$, for any $z \in X$. 

\medskip
We have the following lemma:

\begin{lemma} \label{lemma_polar_curve}
There is a representative $X$ of $(X,x)$ in an open neighborhood $U$ of $\bC^n$ and a non-empty Zariski open set $\Omega$ in the space of non-zero linear forms of $\bC^N$ to $\bC$ that take $x \in \bC^N$ to $0 \in \bC$ such that, for any $\ell \in \Omega$, the analytic morphism $\phi_\ell: X \to \bC^2$ satisfies: 
\begin{itemize}
\item[$(i)$] The part of the critical locus of the restriction of $\phi_\ell$ to a stratum $S_\a$ that lies outside $f^{-1}(0)$ is either empty or a smooth reduced complex curve, whose closure in $X$ is denoted by $\Gamma_\a$.
\item[$(ii)$] The image $(\Delta_\a,0)$ of $(\Gamma_\a,x)$ by $\phi_\ell$ is the germ of a complex curve.
\end{itemize}
\end{lemma}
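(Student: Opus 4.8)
The plan is to realise, one stratum at a time, the relevant part of the critical locus of $\phi_\ell$ as a fibre of the projection from a relative conormal space, and then to obtain $(i)$ and $(ii)$ for all $\ell$ outside a proper Zariski-closed subset of the space of linear forms vanishing at $x$ by a Bertini--Sard argument. I would begin by shrinking the representative so that $\B_\e(x)\cap X$ meets only finitely many strata, each with closure containing $x$. For such an $X$ I claim $\mathrm{crit}(f|_{S_\a})\subset f^{-1}(0)$ for every $\a$: on any irreducible component $Z$ through $x$ of the closure of $\mathrm{crit}(f|_{S_\a})$ the differential of $f|_{S_\a}$, hence that of $f|_Z$, vanishes along a dense open subset, so $f|_Z$ is locally constant and thus identically $f(x)=0$, while components not through $x$ disappear after further shrinking. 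Hence $f$ is a submersion on $S_\a^{\ast}:=S_\a\backslash f^{-1}(0)$, and a point $z\in S_\a^{\ast}$ lies in $\mathrm{crit}(\phi_\ell|_{S_\a})$ precisely when $\ker\ell\supset T_z\bigl(f^{-1}(f(z))\cap S_\a\bigr)$; equivalently, the part of $\mathrm{crit}(\phi_\ell|_{S_\a})$ lying outside $f^{-1}(0)$ is the classical relative polar locus of $f|_{S_\a}$ with respect to $\ell$. Strata on which $f$ is constant satisfy $S_\a\subset f^{-1}(0)$ (as $x\in\overline{S_\a}$ forces the value $0$) and may be discarded.

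For each remaining stratum, with $d_\a:=\dim_\bC S_\a$, I would introduce the relative conormal space $C_\a$: the closure in $X\times\check{\bP}^{N-1}$ of the set of pairs $(z,[H])$ with $z\in S_\a^{\ast}$ and $H\supset T_z(f^{-1}(f(z))\cap S_\a)$. Over $S_\a^{\ast}$ this is a locally trivial $\check{\bP}^{N-d_\a}$-bundle, so its open part $C_\a^{0}$ lying over $S_\a^{\ast}$ is smooth of pure dimension $N$; the second projection $\rho_\a\colon C_\a^{0}\to\check{\bP}^{N-1}$ is a morphism of smooth analytic spaces, and the first projection maps its fibre $\rho_\a^{-1}([\ker\ell])$ isomorphically onto the polar locus $\mathrm{crit}(\phi_\ell|_{S_\a})\backslash f^{-1}(0)$. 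A Bertini--Sard argument, carried out inside the space of non-zero linear forms vanishing at $x$, then produces a non-empty Zariski-open subset $\Omega_\a$ of that space such that, for $\ell\in\Omega_\a$, the fibre $\rho_\a^{-1}([\ker\ell])$ is either empty or smooth of pure dimension $N-(N-1)=1$; in the latter case its image in $S_\a^{\ast}$ is a smooth, hence reduced, curve, and $\Gamma_\a$ is defined as its closure in $X$. Intersecting the finitely many $\Omega_\a$ gives $(i)$, compatibility with $\mathcal{S}$ being automatic since $\Gamma_\a$ lies over $S_\a$.

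For $(ii)$, a non-empty $\Gamma_\a$ is a curve passing through $x$ and not contained in $f^{-1}(0)$, being the closure of a subset of $S_\a^{\ast}$ accumulating at $x$; hence $f|_{\Gamma_\a}$ is non-constant, therefore finite near $x$, so $\phi_\ell|_{\Gamma_\a}=(\ell|_{\Gamma_\a},f|_{\Gamma_\a})$ is finite as well, and $\Delta_\a=\phi_\ell(\Gamma_\a)$ is a one-dimensional analytic germ at $0$, that is, the germ of a complex curve.

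The step I expect to be the main obstacle is the genericity in the second paragraph. Because $\Gamma_\a$ itself moves with $\ell$, the properties ``dimension $\le1$, smooth, reduced'' must be encoded as membership of $\ell$ in a \emph{single} proper Zariski-closed subset of the (finite-dimensional) space of linear forms vanishing at $x$; this is exactly why one runs Sard/Bertini on the fixed total space $C_\a^{0}$ rather than on the varying polar locus. One must then still verify that intersecting the resulting dense opens with the constraint $\ell(x)=0$ leaves a non-empty set --- a non-emptiness that rests on a dimension estimate for the incidence variety of hyperplanes through $x$ that contain limiting tangent spaces along $S_\a$, where the Whitney conditions on $\mathcal{S}$ are the essential input. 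The remaining points, namely that the closures in $X\times\check{\bP}^{N-1}$ are genuinely analytic and the bookkeeping over the finitely many strata, are routine.
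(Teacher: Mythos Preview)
Your argument is correct and takes a genuinely different route from the paper. The paper works with explicit Jacobian matrices: writing $I(\overline{S}_\a)=(g_1,\dots,g_m)$, it expresses the critical locus $C_{\ell,\a}$ via the vanishing of the $(\rho+2)$-minors of the matrix with rows $\nabla g_i$, $\nabla f$, and $(a_1,\dots,a_N)$; since these determinants are linear in the coefficients $a_i$ of $\ell$, an analytic Bertini theorem for linear systems shows that for $\ell$ in a Zariski-open set the singular locus of this determinantal scheme lies in $\mathrm{Crit}(f|_{\overline{S}_\a})\cup\Sigma_{\overline{S}_\a}$, hence inside $f^{-1}(0)$, forcing $C_{\ell,\a}$ to be smooth. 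Your approach via the relative conormal space $C_\a^{0}$ and generic fibres of $\rho_\a\colon C_\a^{0}\to\check{\bP}^{N-1}$ is the standard modern treatment (essentially the L\^e--Teissier viewpoint); it is more conceptual and makes the dimension count $\dim C_\a^{0}-\dim\check{\bP}^{N-1}=N-(N-1)=1$ transparent, a point the paper leaves implicit. For $(ii)$ the paper argues finiteness from $\phi_\ell^{-1}(0,0)\cap\Gamma_\a\subset\{x\}$ via the geometric Weierstrass preparation theorem and then invokes Remmert's proper mapping theorem; your argument through non-constancy of $f|_{\Gamma_\a}$ reaches the same conclusion.

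One small correction: the Whitney conditions on $\mathcal{S}$ are \emph{not} needed for the non-emptiness of $\Omega_\a$ inside the space of forms vanishing at $x$, and the paper does not use them in this lemma. It suffices to run the Bertini--Sard argument directly on the restriction of $\rho_\a$ to the preimage of the hyperplane $\{[H]:x\in H\}\subset\check{\bP}^{N-1}$, or to observe that the analytic locus of bad values of $\rho_\a$ is proper and cannot contain this entire hyperplane once $d_\a\ge 1$. The Whitney hypothesis becomes relevant only later in the paper, for the transversality and isotopy arguments.
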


\begin{proof}
Let us choose an open neighborhood $U$ in $\bC^n$ such that the intersection $U \cap S_\a$ is not empty for finite many indices $\a$. Furthermore, we may assume that the closure $\overline{S}_\a$ in $U$ is defined by an ideal $I(\overline{S}_\a)$ generated by complex analytic functions $g_1, \dots, g_m$ defined on $U$, that is, $I(\overline{S}_\a) = (g_1, \dots, g_m)$.

\medskip
Now consider a linear form $\ell = a_1 x_1 + \dots + a_N x_N$ and a stratum $S_\a$ such that $0 \in \overline{S}_\a$. Let $C_{\ell,\a}$ be the critical set of the restriction of $\phi_\ell$ to $S_\a \setminus f^{-1}(0)$. We will show that $C_{\ell,\a}$ is contained in a member of a linear system parametrized by $\ell$. Consider the matrix:

$$J_\a = 
\begin{pmatrix}
\partial g_1 / \partial x_1 & \dots & \partial g_1 / \partial x_N \\
\vdots & \ddots & \vdots \\
\partial g_m / \partial x_1 & \dots & \partial g_m / \partial x_N \\
\end{pmatrix}
.$$ 

A point $z$ of $S_\a$ is a point where the rank of $J_\a$ at $z$ is $\rho := \displaystyle\max_{z \in \overline{S}_\a} \ \text{rank} \big( J_\a (z) \big)$, since it is a non-singular point of $\overline{S}_\a$. A point of $C_{\ell,\a}$ is a point of $S_\a \setminus f^{-1}(0)$ where the matrix:

$$J_{\phi,\a} = 
\begin{pmatrix}
\partial g_1 / \partial x_1 & \dots & \partial g_1 / \partial x_N \\
\vdots & \ddots & \vdots \\
\partial g_m / \partial x_1 & \dots & \partial g_m / \partial x_N \\
\partial f / \partial x_1 & \dots & \partial f / \partial x_N \\
a_1 & \dots & a_N \\
\end{pmatrix}
$$ 
has rank at most $\rho+1$. So the determinants of the $(\rho+2)$-minors:

$$
\begin{pmatrix}
\partial g_{i_1} / \partial x_{j_1} & \dots & \partial g_{i_1} / \partial x_{j_{\rho+2}} \\
\vdots & \ddots & \vdots \\
\partial g_{i_\rho} / \partial x_{j_1} & \dots & \partial g_{i_\rho} / \partial x_{j_{\rho+2}} \\
\partial f / \partial x_{j_1} & \dots & \partial f / \partial x_{i_{\rho+2}} \\
a_{i_1} & \dots & a_{i_{\rho+2}} \\
\end{pmatrix}
$$
are zero, that is:
$$
\sum_{k=1}^{\rho+2} (-1)^{k+1} \cdot a_{i_k} \cdot \text{det}
\begin{pmatrix}
\partial g_{i_1} / \partial x_{j_1} & \dots & \partial g_{i_1} / \partial x_{j_{k-1}} & \partial g_{i_1} / \partial x_{j_{k+1}} & \dots & \partial g_{i_1} / \partial x_{j_{\rho+2}} \\
\vdots & \ddots & \vdots & \vdots & \ddots & \vdots \\
\partial g_{i_\rho} / \partial x_{j_1} & \dots & \partial g_{i_\rho} / \partial x_{j_{k-1}} & \partial g_{i_\rho} / \partial x_{j_{k+1}} & \dots & \partial g_{i_\rho} / \partial x_{j_{\rho+2}} \\
\partial f / \partial x_{j_1} & \dots & \partial f / \partial x_{j_{k-1}} & \partial f / \partial x_{j_{k+1}} & \dots & \partial f / \partial x_{j_{\rho+2}} \\
\end{pmatrix}
=0.$$

\medskip
An analytic version of a classical theorem of Bertini (see \cite{Be1} and \cite{Be2}) states that if $h_1, \dots, h_r$ are holomorphic functions defined on a complex analytic space $Y$ and if $\lambda_i$ are sufficiently generic, for $i=1, \dots, r$, then the singular locus of the subvariety $\left\{ \sum_{i=1}^r \lambda_i h_i =0 \right\}$ is contained in the union of the singular set of $Y$ and the set:
$$\{h_1 = \dots = h_r =0 \} \, .$$

\medskip
So it follows from the analytic theorem of Bertini that there exists a non-empty Zariski open set $\Omega_\a$ in the space of non-zero linear forms of $\bC^N$ to $\bC$ that take $x \in \bC^N$ to $0 \in \bC$ such that for any $\ell \in \Omega_\a$ one has that the singular points $\Sigma_{C_{\ell,\a}}$ of $C_{\ell,\a}$ are contained in the union of the set of the points where the determinants above are zero and the singular locus of $S_\a$. That is:
$$\Sigma_{C_{\ell,\a}} \subset \big( \text{Crit} \ (f_{| \overline{S}_\a}) \cup \Sigma_{\overline{S}_\a} \big) \cap S_\a \, .$$
Since this intersection is contained in $f^{-1}(0) \cap S_\a$ and $C_{\ell,\a}$ is contained in $S_\a \setminus f^{-1}(0)$, it follows that $C_{\ell,\a}$ is smooth. 

\medskip
Now, since $\phi_\ell^{-1}(0,0) \cap (\Gamma_\a,x) \subset \{x\}$, it follows from the geometric version of the Weierstrass preparation theorem given in \cite{Ho} that the restriction of $\phi_\ell$ to the germ $(\Gamma_\a,x)$ is finite. In particular, a theorem of Remmert (see \cite{Gr}, page 60, for instance) implies that the image $\Delta_\a$ of the analytic set $\Gamma_\a$ by the finite morphism $\phi_\ell$ is an analytic set of the same dimension, and hence it is a complex curve. 

\medskip
Finally, notice that there is a finite number of indices $\a \in A$ such that $x$ is contained in $\overline{S}_\a$. Let $A_x$ be the finite subset of $A$ formed by such indices. So the set:
$$\Omega := \bigcap_{\a \in A_x} \Omega_\a$$
is the desired non-empty Zariski open set in the space of non-zero linear forms of $\bC^N$ to $\bC$ that take $x \in \bC^N$ to $0 \in \bC$.

\end{proof}

For any $\ell \in \Omega$ we say that the germ of curve at $x$ given by:
$$\Gamma_\ell := \bigcup_{\a \in A} \Gamma_\a$$
is the {\it polar curve of $f$ relatively to $\ell$ at $x$} and that the germ of curve at $0$ given by: 
$$\Delta_\ell := \bigcup_{\a \in A} \Delta_\a$$
is the {\it polar discriminant of $f$ relatively to $\ell$ at $0$}. In fact, $\Delta_\ell$ behaves as a discriminant of the germ $\phi_\ell$ as indicated in Proposition \ref{prop_discriminant} below.

\medskip
From now on, we fix a linear form $\ell \in \Omega$ and we set $\phi := \phi_\ell$, $\Gamma := \Gamma_\ell$ and $\Delta := \Delta_\ell$. 

\medskip
Also, from this point we will assume that $f$ has an isolated singularity at $x$ relatively to the stratification $\mathcal{S}$.

\medskip
Now let us recall some definitions and a result that gives us an important tool to prove Theorem \ref{theo_main} (see \cite{LT} for instance). 

\begin{defi} \label{defi_svf}
Let $\mathcal{X}$ and $\mathcal{Y}$ be analytic spaces with Whitney stratifications $(\mathcal{X}_\a)_{\a \in A}$ and $(\mathcal{Y}_\b)_{\b \in B}$ respectively. A map $h: \mathcal{X} \to \mathcal{Y}$ is a stratified map if:
\begin{itemize}
\item[$(i)$] $h$ is continuous;
\item[$(ii)$] $h$ sends each stratum $\mathcal{X}_\a$ to a unique stratum $\mathcal{Y}_{\b(\a)}$, for some $\b(\a) \in B$;
\item[$(iii)$] the restriction of $h$ to each stratum $\mathcal{X}_\a$ induces a smooth map $h_\a: \mathcal{X}_\a \to \mathcal{Y}_{\b(\a)}$.
\end{itemize}

We say that a stratified map $h$ as above is a stratified submersion if each $h_\a$ is a submersion.

We say that a stratified map $h$ as above is a stratified homeomorphism if $h$ is a homeomorphism and each $h_\a$ is a smooth diffeomorphism.
\end{defi}

\begin{defi}
Let $\mathcal{X}$ be an analytic space with Whitney stratification $(\mathcal{X}_\a)_{\a \in A}$. We say that a continuous vector field $v$ in $\mathcal{X}$ is a stratified vector field if for each $\a \in A$, the restriction $v_\a$ of $v$ to $\mathcal{X}_\a$ is a smooth vector field. 
\end{defi}

We have:

\begin{lemma} \label{lemma_Ve}
Let $\mathcal{X}$ be a complex analytic space with a Whitney stratification $(\mathcal{X}_\a)_{\a \in A}$, and let $h: \mathcal{X} \to U$ be a proper stratified submersion over an open subset $U$ of $\bC^k$. If $v$ is a smooth vector field in $U$, then $h$ lifts $v$ to a stratified vector field in $\mathcal{X}$ that is integrable.
\end{lemma}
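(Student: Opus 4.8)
The plan is to build the lift stratum by stratum and then glue the pieces with a partition of unity, which is the standard Thom–Mather technique; the only subtlety is that the gluing has to respect the stratification and produce a vector field that is still *continuous* on all of $\mathcal{X}$, not merely smooth on each stratum. First I would fix a point $p \in \mathcal{X}$, say $p \in \mathcal{X}_\a$, and work in a neighborhood $W$ of $p$ in $\mathcal{X}$. Because $h$ is a stratified submersion, the restriction $h_\a : \mathcal{X}_\a \to U$ is a smooth submersion near $p$, so by the implicit function theorem there is a smooth local lift $w_\a$ of $v$ on $\mathcal{X}_\a \cap W$, i.e.\ a smooth vector field on $\mathcal{X}_\a \cap W$ with $(dh_\a)(w_\a) = v \circ h_\a$. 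The point is to extend $w_\a$ to a stratified vector field on all of $W$ that still lifts $v$. Here one uses control data for the Whitney stratification (Mather's theory): near $p$ there is a tubular neighborhood of $\mathcal{X}_\a$ with a smooth retraction $\pi_\a$ and a tube function $\rho_\a$, and one can extend $w_\a$ to the higher strata $\mathcal{X}_\b$ with $\mathcal{X}_\a \subset \overline{\mathcal{X}_\b}$ by requiring compatibility with $\pi_\a$ and $\rho_\a$; the Whitney conditions are exactly what guarantee that such an extension exists, is smooth on each stratum, and is continuous across strata. Performing this on each stratum (starting from the lowest-dimensional ones) produces, in a neighborhood of $p$, a stratified vector field $w^{(p)}$ on $W$ lifting $v$.

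Next I would globalize. Cover a neighborhood of any compact piece of $\mathcal{X}$ (or all of $\mathcal{X}$, using properness of $h$ to reduce to the compact case over $U$) by finitely many such open sets $W_i$ carrying stratified lifts $w^{(i)}$ of $v$, choose a smooth partition of unity $\{\theta_i\}$ subordinate to $\{W_i\}$ whose members are constant along... more precisely, restrict $\theta_i$ to each stratum so that $\theta_i|_{\mathcal{X}_\a}$ is smooth, and set
$$v^* := \sum_i \theta_i \, w^{(i)}.$$
Since the lift condition $(dh)(w^{(i)}) = v \circ h$ is affine in the vector field and $\sum_i \theta_i \equiv 1$, the convex combination $v^*$ still satisfies $(dh)(v^*) = v \circ h$ on each stratum; restricted to $\mathcal{X}_\a$ it is a finite sum of smooth vector fields times smooth functions, hence smooth, and it is continuous on $\mathcal{X}$ because each $w^{(i)}$ is. Thus $v^*$ is a stratified vector field lifting $v$.

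Finally, integrability: I would invoke that a continuous vector field which is smooth and tangent along each stratum of a Whitney stratification admits integral curves that stay within strata (a stratified flow), by the existence theory for controlled vector fields — this is again Thom–Mather, and it is precisely the point where the control conditions $\pi_\a \circ (\text{flow}) = \text{flow} \circ \pi_\a$, built into the construction of $w^{(i)}$, are used to show the local flows on the various strata fit together into a continuous flow on $\mathcal{X}$; properness of $h$ keeps the integral curves from escaping to infinity over a fixed point of $U$, so the flow is defined for all the relevant time. The main obstacle is the middle step: arranging that the stratumwise lifts are mutually compatible with the control data so that the glued field is genuinely continuous across strata and its flow is a stratified flow — everything else is a routine partition-of-unity argument. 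In the write-up I would therefore either cite the controlled-vector-field machinery of Mather (and the treatment in the reference \cite{LT} already mentioned) or reproduce just enough of it to construct $w^{(i)}$ with the required compatibilities.
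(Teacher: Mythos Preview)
Your plan is correct and is essentially the classical Thom--Mather argument via control data and controlled vector fields; it would work for any Whitney stratified space. The paper, however, does not go this route. Instead it exploits the fact that $\mathcal{X}$ is \emph{complex analytic}: by Teissier's theorem (Theorem 1.2, Chapter V of \cite{Te2}), the Whitney conditions on a complex analytic space imply the \emph{strict} Whitney conditions (Verdier's condition (w)), and then Proposition 4.6 of Verdier \cite{Ve} gives directly that the lift can be chosen \emph{rugose}, which immediately yields integrability. So the paper's proof is a two-line citation chain specific to the complex analytic setting, bypassing control data entirely. Your approach is more general and self-contained but heavier; the paper's is shorter and leans on the extra regularity available in the complex analytic case. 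Either is acceptable, but if you are writing this up for the present paper you should at least note the Teissier--Verdier shortcut, since rugosity (rather than mere controlledness) is what the paper invokes repeatedly in later sections.
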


The proof comes from the fact that, under the hypothesis of the first isotopy theorem of Thom-Mather (\cite{Ma}), a smooth vector field lifts to a rugose vector field, and hence integrable. For this, we notice that for a complex analytic space, Theorem 1.2 of chapter V of \cite{Te2} gives that the Whitney conditions imply the strict Whitney conditions, and then our claim follows from Proposition 4.6 of \cite{Ve}.

\medskip
As we said before, Lemma \ref{lemma_Ve} will be used many times in the next sections of this paper.

\medskip
Now let us go back to the map $\phi = (\ell, f): X \to \bC^2$ defined above. Notice that it is stratified and that it induces a stratified submersion $\phi^{-1}(U \setminus \Delta) \to U \setminus \Delta$, where $U$ is an open subset of $\bC^2$.

\medskip
By an analytic version of Corollary 2.8 of \cite{Mi}, there exist $\e$ and $\eta_2$ small enough positive real numbers with $0< \eta_2 \ll \e \ll 1$ such that, for any $t \in \BD_{\eta_2}$, the sphere $\BS_\e(x)$ around $x$ of radius $\e$ intersects $f^{-1}(t) \cap S_\a$ transversally, for any $\a \in A$.

\medskip
We can also choose the linear form $\ell$ in such a way that there exists $\eta_1$ sufficiently small, with $0< \eta_2 \ll \eta_1 \ll \e \ll 1$, such that $\phi^{-1}(s,t) \cap S_\a = \ell^{-1}(s) \cap f^{-1}(t) \cap S_\a$ intersect $\BS_\e(x)$ transversally, for any $(s,t) \in \BD_{\eta_1} \times \BD_{\eta_2}$, where $\BD_{\eta_1}$ and $\BD_{\eta_2}$ are the closed disks in $\bC$ centered at $0$ and with radii $\eta_1$ and $\eta_2$, respectively.

\medskip
So we have:

\begin{prop} \label{prop_discriminant}
The map $\phi=(\ell,f)$ induces a stratified submersion: 
$$\phi_|: \B_\e(x) \cap X \cap \phi^{-1}(\BD_{\eta_1} \times \BD_{\eta_2} \setminus \Delta) \to \BD_{\eta_1} \times \BD_{\eta_2} \setminus \Delta \, .$$
\end{prop}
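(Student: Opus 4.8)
The plan is to assemble the statement from three ingredients already in place: (a) the fact that $\phi=(\ell,f)$ is a stratified map inducing a stratified submersion away from the discriminant $\Delta$, noted just before Proposition~\ref{prop_discriminant}; (b) the two transversality choices of $\e$, $\eta_1$, $\eta_2$ made in the two displayed paragraphs preceding the statement; and (c) the observation that $\phi^{-1}(s,t)\cap S_\a=\ell^{-1}(s)\cap f^{-1}(t)\cap S_\a$. First I would fix $(s,t)\in(\BD_{\eta_1}\times\BD_{\eta_2})\setminus\Delta$ and check that the restricted map has the three properties of a stratified submersion: properness, the stratum-to-stratum condition, and submersivity on each stratum.

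Properness is the easy part: $\B_\e(x)$ is compact, $X$ is closed in the ambient neighborhood, and $\phi^{-1}(\BD_{\eta_1}\times\BD_{\eta_2}\setminus\Delta)\cap\B_\e(x)\cap X$ maps to $\BD_{\eta_1}\times\BD_{\eta_2}\setminus\Delta$; one has to be slightly careful because the source is not compact (we have removed $\phi^{-1}(\Delta)$), so properness must be read relative to the target $\BD_{\eta_1}\times\BD_{\eta_2}\setminus\Delta$ — a sequence in the source whose image converges to a point of the target stays away from $\Delta$, hence in a closed subset of the compact $\B_\e(x)\cap X$, so it has a convergent subsequence. The stratum-to-stratum condition is inherited from the fact that $\phi$ itself is stratified (here the strata on the target side are just the connected components of the open set $\BD_{\eta_1}\times\BD_{\eta_2}\setminus\Delta$, a single stratum). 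For submersivity: away from $f^{-1}(0)$, a point of $S_\a$ where $\phi|_{S_\a}$ fails to be submersive lies on the polar curve $\Gamma_\a$ by Lemma~\ref{lemma_polar_curve}, hence its image lies on $\Delta_\a\subset\Delta$, and such points have been excluded; the stratum $S_{\a(x)}$ through $x$ needs the isolated-singularity hypothesis together with $\eta_2\ll\e$ so that $x\notin X_t$. The subtle case is where we also meet $\partial\B_\e(x)=\BS_\e(x)$: there the relevant map is $\phi$ restricted to $S_\a\cap\BS_\e(x)$, and submersivity onto $\BD_{\eta_1}\times\BD_{\eta_2}$ is exactly guaranteed by the second transversality choice, namely that $\phi^{-1}(s,t)\cap S_\a$ meets $\BS_\e(x)$ transversally for all $(s,t)\in\BD_{\eta_1}\times\BD_{\eta_2}$; this transversality forces the differential of $\phi|_{S_\a\cap\BS_\e(x)}$ to remain surjective.

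The main obstacle, and the place where I would spend the most care, is the behaviour along the corner $\partial\B_\e(x)$: the source $\B_\e(x)\cap X\cap\phi^{-1}(\cdots)$ is a manifold-with-boundary in the stratified sense, so being a ``stratified submersion'' must be interpreted so that the restriction to the boundary part of each stratum is also a submersion onto the target — this is what makes Lemma~\ref{lemma_Ve} (and ultimately Thom--Mather isotopy) applicable later. I would therefore state explicitly that, by the two genericity choices of $\e,\eta_1,\eta_2$, for each $\a$ the two maps $\phi|_{(\B_\e(x)\cap S_\a)\setminus\phi^{-1}(\Delta)}$ and $\phi|_{(\BS_\e(x)\cap S_\a)\setminus\phi^{-1}(\Delta)}$ are both submersions onto $(\BD_{\eta_1}\times\BD_{\eta_2})\setminus\Delta$, and that together with properness this is precisely the assertion. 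Everything else is bookkeeping over the finite set of strata meeting $\B_\e(x)$.
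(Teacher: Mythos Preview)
Your proposal is correct and follows essentially the same route as the paper. The paper gives no explicit proof of this proposition --- it is presented with ``So we have:'' immediately after the two transversality paragraphs --- so the intended argument is precisely the assembly you describe: submersivity on the interior strata comes from the definition of the polar curve and discriminant (critical points of $\phi|_{S_\a}$ outside $f^{-1}(0)$ map into $\Delta$, and the remaining ones are handled by the isolated-singularity hypothesis), while submersivity on the boundary strata $S_\a\cap\BS_\e(x)$ is exactly the content of the transversality choices of $\e,\eta_1,\eta_2$.

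One minor remark: by the paper's Definition~\ref{defi_svf}, ``stratified submersion'' does not include properness as part of the definition. Your properness paragraph is correct and is needed for Corollary~\ref{cor_discriminant} (Thom--Mather), but it is strictly more than what the proposition itself asserts.
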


In particular, the first isotopy theorem of Thom-Mather gives:

\begin{cor} \label{cor_discriminant}
The restriction $\phi_|$ above is a topological locally trivial fibration.
\end{cor}

Therefore the curve $\Delta$ plays the role of a local topological discriminant for the stratified map $\phi$.
  
\medskip
For any $t$ in the disk $\BD_{\eta_2}$ set: 
$$D_t := \BD_{\eta_1} \times \{t\} \, .$$
If $t \neq 0$, the Milnor fiber $f^{-1}(t) \cap \B_\e(x)$ of $f$ is homeomorphic to $\phi^{-1}(D_t) \cap \B_\e(x)$ (see Theorem 2.3.1 of \cite{LT}). So, in order to simplify notation, we reset: 
$$X_t := \phi^{-1}(D_t) \cap \B_\e(x) \, .$$
Notice that with this notation, the boundary $\partial X_t$ of $X_t$ is given by the union of $\phi^{-1}(\mathring{D}_t) \cap \BS_\e(x)$ and $\phi^{-1}(\partial D_t) \cap \B_\e(x)$.

\medskip
Next we give a lemma that will be implicitly used many times in the paper:

\begin{lemma} \label{lemma_strata}
Let $\mathcal{X}$ and $\mathcal{Y}$ be two analytic spaces in $\bC^N$ such that $\mathcal{X}$ has a Whitney stratification $(\mathcal{X}_\a)_{\a \in A}$ and such that $\mathcal{Y}$ is non-singular. If $\mathcal{Y}$ intersects each stratum $\mathcal{X}_\a$ transversally in $\bC^N$, then the Whitney stratification of $\mathcal{X}$ induces a Whitney stratification $(\mathcal{Y}_\a)_{\a \in A}$ of $\mathcal{X} \cap \mathcal{Y}$, where $\mathcal{Y}_\a := \mathcal{X}_\a \cap \mathcal{Y}$, for each $\a \in A$.
\end{lemma}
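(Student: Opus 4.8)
The plan is to verify the three defining properties of a Whitney stratification directly for the collection $(\mathcal{Y}_\a)_{\a \in A}$ with $\mathcal{Y}_\a := \mathcal{X}_\a \cap \mathcal{Y}$. First I would check that the $\mathcal{Y}_\a$ are locally closed smooth submanifolds: since $\mathcal{Y}$ is non-singular and meets each stratum $\mathcal{X}_\a$ transversally in $\bC^N$, the intersection $\mathcal{X}_\a \cap \mathcal{Y}$ is a smooth submanifold of $\bC^N$, with $T_p \mathcal{Y}_\a = T_p \mathcal{X}_\a \cap T_p \mathcal{Y}$ for each $p \in \mathcal{Y}_\a$, and it is locally closed because $\mathcal{X}_\a$ is. Next I would verify the frontier condition: if $\mathcal{Y}_\a \cap \overline{\mathcal{Y}_\b} \neq \emptyset$ then $\mathcal{Y}_\a \subset \overline{\mathcal{Y}_\b}$; this will follow from the frontier condition for the $\mathcal{X}_\a$ together with the fact that transversal intersection with the fixed non-singular $\mathcal{Y}$ commutes with closure locally — more precisely, near a point of $\mathcal{X}_\a \cap \mathcal{Y}$ the closure $\overline{\mathcal{X}_\b}$ is a union of strata, each of which meets $\mathcal{Y}$ transversally, and transversality is an open condition so it persists on a neighborhood.

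The substantive point is the Whitney conditions (a) and (b) for the pair $(\mathcal{Y}_\b, \mathcal{Y}_\a)$ at a point $p \in \mathcal{Y}_\a$ lying in $\overline{\mathcal{Y}_\b}$. Here I would take sequences $p_i \to p$ in $\mathcal{Y}_\b$ and $q_i \to p$ in $\mathcal{Y}_\a$ such that the secant lines $\overline{p_i q_i}$ converge to a line $L$ and the tangent planes $T_{p_i}\mathcal{Y}_\b$ converge to a plane $T$; I must show $T_p \mathcal{Y}_\a \subset T$ (condition (a)) and $L \subset T$ (condition (b)). The idea is to use the Whitney conditions for $(\mathcal{X}_\b, \mathcal{X}_\a)$, which hold by hypothesis: passing to a subsequence, $T_{p_i}\mathcal{X}_\b \to \tau$ for some plane $\tau \supseteq T_p\mathcal{X}_\a$ and $\tau \supseteq L$. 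Since $p_i \in \mathcal{Y}$ and $\mathcal{Y}$ is smooth, $T_{p_i}\mathcal{Y} \to T_p\mathcal{Y}$. Using transversality $T_{p_i}\mathcal{X}_\b + T_{p_i}\mathcal{Y} = \bC^N$ and the fact that $T_{p_i}\mathcal{Y}_\b = T_{p_i}\mathcal{X}_\b \cap T_{p_i}\mathcal{Y}$, one wants to conclude that the limit $T$ of $T_{p_i}\mathcal{Y}_\b$ equals $\tau \cap T_p\mathcal{Y}$; then condition (a) follows since $T_p\mathcal{Y}_\a = T_p\mathcal{X}_\a \cap T_p\mathcal{Y} \subseteq \tau \cap T_p\mathcal{Y} = T$, and condition (b) follows since $L \subseteq \tau$ and $L \subseteq T_p\mathcal{Y}$ (as all the $q_i$, and $p$, lie in $\mathcal{Y}$) give $L \subseteq \tau \cap T_p\mathcal{Y} = T$.

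The main obstacle, and the step that needs care, is precisely this claim that intersection of limits equals limit of intersections: in general $\lim(V_i \cap W_i)$ can be strictly larger than $(\lim V_i) \cap (\lim W_i)$, and one only gets the inclusion $\lim(V_i \cap W_i) \subseteq (\lim V_i) \cap (\lim W_i)$ for free by a dimension/compactness argument in the Grassmannian. To get equality here I would exploit that $\dim \mathcal{Y}_\b = \dim \mathcal{X}_\b - \mathrm{codim}\,\mathcal{Y}$ is constant along the sequence (by transversality, which is open, the intersection dimension is locally constant), so $\dim T_{p_i}\mathcal{Y}_\b$ is constant, hence $\dim T = \dim(\lim V_i \cap W_i)$; and transversality of the limits $\tau$ and $T_p\mathcal{Y}$ — which holds because transversality is open and $\mathcal{X}_\b$ meets $\mathcal{Y}$ transversally near $p$ in the sense that every limit of tangent planes of nearby strata contained in $\overline{\mathcal{X}_\b}$ is still transverse to $\mathcal{Y}$ — forces $\dim(\tau \cap T_p\mathcal{Y})$ to equal that same number, so the inclusion $T \subseteq \tau \cap T_p\mathcal{Y}$ is an equality of subspaces of equal dimension. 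I would remark that this last transversality-of-limits fact is itself a consequence of the Whitney condition (a) for $(\mathcal{X}_\b, \mathcal{X}_\a)$ applied along $\overline{\mathcal{X}_\b}$, together with the hypothesis that $\mathcal{Y}$ meets \emph{every} stratum transversally. Once equality of dimensions is in hand, conditions (a) and (b) for $(\mathcal{Y}_\b, \mathcal{Y}_\a)$ drop out as above, completing the proof.
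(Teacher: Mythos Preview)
The paper states this lemma without proof, treating it as a standard fact about transversal intersections with Whitney stratified sets. Your direct verification is correct and is essentially the standard argument. The crucial step---that the limit $T$ of $T_{p_i}\mathcal{Y}_\b = T_{p_i}\mathcal{X}_\b \cap T_{p_i}\mathcal{Y}$ equals $\tau \cap T_p\mathcal{Y}$---is handled exactly right: the inclusion $T \subseteq \tau \cap T_p\mathcal{Y}$ is automatic, and equality follows by a dimension count once one knows $\tau + T_p\mathcal{Y} = \bC^N$, which you correctly deduce from Whitney~(a) for $(\mathcal{X}_\b,\mathcal{X}_\a)$ (giving $T_p\mathcal{X}_\a \subseteq \tau$) together with the transversality of $\mathcal{Y}$ with $\mathcal{X}_\a$ (giving $T_p\mathcal{X}_\a + T_p\mathcal{Y} = \bC^N$).

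One small point worth tightening: your treatment of the frontier condition is a bit loose. What is needed is $\mathcal{X}_\a \cap \mathcal{Y} \subseteq \overline{\mathcal{X}_\b \cap \mathcal{Y}}$, not merely $\mathcal{X}_\a \cap \mathcal{Y} \subseteq \overline{\mathcal{X}_\b} \cap \mathcal{Y}$, and the passage from the latter to the former is not just ``transversality is open''---it uses precisely the same transversality-of-limits observation you make later (every limit of tangent planes to $\mathcal{X}_\b$ at a point $q \in \mathcal{X}_\a \cap \mathcal{Y}$ is transverse to $T_q\mathcal{Y}$, so $\mathcal{X}_\b \cap \mathcal{Y}$ has the expected dimension near $q$ and cannot avoid $q$ in its closure). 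You already have this ingredient; just invoke it explicitly at the frontier step.
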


In particular, the Whitney stratification ${\mathcal S}$ of $X$ induces a Whitney stratification $\mathcal{S}(t)$ of $X_t$. Precisely, the strata of such Whitney stratification are the following intersections, for $\a \in A$:
\begin{itemize}
\item[$(i)$] $\mathcal{S}_\a \cap (X_t \setminus \partial X_t)$;
\item[$(ii)$] $\mathcal{S}_\a \cap \phi^{-1}(\mathring{D}_t) \cap \BS_\e(x)$;
\item[$(iii)$] $\mathcal{S}_\a \cap \phi^{-1}(\partial D_t) \cap \mathring{\B}_\e(x)$;
\item[$(iv)$] $\mathcal{S}_\a \cap \phi^{-1}(\partial D_t) \cap \BS_\e(x)$.
\end{itemize}

\medskip
Now, for any $t \in \BD_{\eta_2}^*$ set $D_t := \BD_{\eta_1} \times \{t\}$. Then $\phi$ induces a stratified map:
$$\ell_t: X_t \to D_t \, .$$
By construction, the restriction of $\ell_t$ to each stratum of $X_t$ is a submersion onto the image of the stratum at any point out of $\Gamma$. Therefore it induces a locally trivial fibration over $D_t \backslash (\Delta \cap D_t)$. That is, if we set:
$$\Delta \cap D_t  = \{y_1(t), \dots, y_k(t) \}$$
then the restriction of $\ell_t$ given by:
$$\varphi_t: X_t \setminus \ell_t^{-1} \big( \{y_1(t), \dots, y_k(t) \} \big) \to D_t \setminus \{y_1(t), \dots, y_k(t) \}$$
is a stratified submersion (see Definition \ref{defi_svf}) and a locally trivial fibration, by Thom-Mather first isotopy theorem.

\begin{remark}
In the case that $\Gamma$ is empty, one has that: 
$$\phi_|: \B_\e(x) \cap X \cap \phi^{-1} (\BD_{\eta_1} \times \BD_{\eta_2}) \to \BD_{\eta_1} \times \BD_{\eta_2} $$
is a locally trivial topological fibration, which implies a locally trivial topological fibration $\ell_t: X_t \to D_t$. Hence in this case the Milnor fiber $X_t$ is homeomorphic to the product of $D_t$ and the general fiber of $\ell_t$. 
\end{remark}

So from now on we shall assume that the polar curve $\Gamma$ is not empty.

%%%%%%%%%%%%%%%%%%%%%%%%%%%%%%%%%%%%%%
\medskip
\section{The two-dimensional case}
\label{section_two_dimensional_case}

We shall prove Theorem \ref{theo_main} by induction on the dimension $n$ of the analytic space $X$. We could start by proving the theorem for $n=1$ and then proceed by induction for $n\geq 2$, but we choose to start with the $2$-dimensional case, in order to provide the reader a better intuition of the constructions.

\medskip
So in this section we prove our theorem when $(X,0)$ is a $2$-dimensional reduced equidimensional germ of complex analytic space and $f: (X,0) \to (\bC,0)$ has an isolated singularity at $0$ in the stratified sense. 

\medskip
One particularity of this $2$-dimensional case is that the singular set $\Sigma$ of $X$ has dimension at most one. If $\Sigma$ has dimension one, we can put it inside the curve $\Gamma$. Precisely, (only) in this section we denote by $\Gamma$ the union of the polar curve of $f$ with $\Sigma$. We also denote by $\Delta$ the union of the polar discriminant of $f$ with $\phi(\Sigma)$. Notice that if $\Delta$ is not empty, it is a complex curve.

%%%%%%%%%%
\vspace{0.3cm}
\subsection{First step: constructing the vanishing polyhedron}
\ \\

For any $t \in \BD_{\eta_2}^*$ fixed, we remind that:
$$\Delta \cap D_t  = \{y_1(t), \dots, y_k(t) \} \, .$$
Let $\lambda_t$ be a point in $D_t \setminus \{y_1(t), \dots, y_k(t)\}$ and for each $j = 1, \dots, k$, let $\delta(y_j(t))$ be the line segment in $D_t$ starting at $\lambda_t$ and ending at $y_j(t)$. We can choose $\lambda_t$ in such a way that any two of these line segments intersect only at $\lambda_t$.

\medskip
Set: 
$$Q_t := \bigcup_{j=1}^k \delta(y_j(t))$$
and
$$P_t := \ell_t^{-1}(Q_t) \, .$$

%It is well known that topological manifolds of dimension 2 or 3, differentiable manifolds of any dimension, analytic and semi-analytic sets are polyhedra.

\medskip
Since $\ell_t$ is finite, one can see that $P_t$ is a one-dimensional polyhedron in $X_t$. And since the map $\varphi_t: X_t \setminus \ell_t^{-1} \big( \{y_1(t), \dots, y_k(t) \} \big) \to D_t \setminus \{y_1(t), \dots, y_k(t) \}$ is a stratified submersion, each $1$-simplex of $P_t$ is contained in some stratum $X_t \cap \mathcal{S}_\a$ of $X_t$, so $P_t$ is adapted to the stratification $\mathcal{S}$. 

\medskip
We say that $P_t$ is a {\it vanishing polyhedron for $f$}.

\medskip
Notice that, in this $2$-dimensional case, the vanishing polyhedron $P_t$ contains the $0$-dimensional strata of the Whitney stratification of $X_t$ induced by $\mathcal{S}$, which are the points of $\Sigma \cap X_t$. In particular, $X_t \backslash P_t$ is a smooth manifold.

\begin{lemma} \label{lemma_v_t}
There exists a continuous vector field $v_t$ in $D_t$ such that:
\begin{enumerate}
\item It is smooth on $D_t \setminus Q_t$;
\item It is null on $Q_t$;
\item It is  transversal to $\partial D_t$ and points inwards;
\item The associated flow $q_t: [0, \infty) \ \times \ (D_t \backslash Q_t) \to D_t \backslash Q_t$ defines a map:
$$
\begin{array}{cccc}
\xi_t \ : & \! \partial D_t & \! \longrightarrow & \! Q_t \\
& \! u & \! \longmapsto & \! \displaystyle \lim_{\tau \to \infty} q_t(\tau,u)
\end{array}
,$$
such that $\xi_t$ is continuous, simplicial and surjective.
\end{enumerate}
\end{lemma}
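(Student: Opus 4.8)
The statement concerns only the closed $2$-disk $D_t=\BD_{\eta_1}\times\{t\}$ and the one-dimensional star $Q_t=\bigcup_{j=1}^k\delta(y_j(t))$, a tree contained in the interior $\mathring D_t$ with a single branch point $\lambda_t$ (of valence $k$) and leaves $y_1(t),\dots,y_k(t)$; so I would argue by an explicit construction in the disk. First I would fix convenient coordinates: prolong each segment $\delta(y_j(t))$ beyond $y_j(t)$ along the same line until it meets $\partial D_t$ in a point $z_j$; the $k$ segments from $\lambda_t$ to the $z_j$ cut $D_t$ into $k$ closed curvilinear sectors $R_1,\dots,R_k$, where $R_j$ lies between $[\lambda_t,z_j]$ and $[\lambda_t,z_{j+1}]$, and on each $R_j$ I would take coordinates $(r,\theta)\in[0,1]\times[0,1]$ (rescaled radius and angle) with $\lambda_t=\{r=0\}$, $\delta(y_j(t))=\{r\le\rho,\ \theta=0\}$ and $\delta(y_{j+1}(t))=\{r\le\rho,\ \theta=1\}$ for a fixed $\rho\in(0,1)$; the portions $\{\rho\le r\le1,\ \theta\in\{0,1\}\}$ are ``phantom cuts'' lying in $D_t\setminus Q_t$. (One may first move $(D_t,Q_t)$ to a standard model by a homeomorphism of the disk, but this is not needed.)

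Next I would define $v_t$ sectorwise by $v_t:=-a(r,\theta)\,\partial_r-w(\theta)\,\partial_\theta$, where $w$ is smooth with $w<0$ on $(0,\tfrac12)$, $w>0$ on $(\tfrac12,1)$, vanishing exactly at $0,\tfrac12,1$ and with all its $\theta$-derivatives agreeing at the two ends (for instance $w(\theta)=-\sin2\pi\theta$), and $a\ge0$ is smooth, with all $\theta$-derivatives matching across $\theta\in\{0,1\}$, vanishing exactly on $\big(\{0\}\times[0,1]\big)\cup\big(\{0,1\}\times[0,\rho]\big)$ — that is, at $\lambda_t$ and on the $Q_t$-parts of the walls — and positive elsewhere, in particular $a(1,\theta)>0$ for all $\theta$. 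I would then check that these formulas patch along the phantom cuts (there the $\partial_\theta$-components vanish and the $\partial_r$-components coincide, and all derivatives match), so that $v_t$ is a well-defined vector field on $D_t$, smooth on $D_t\setminus Q_t$; that it extends continuously by $0$ over $Q_t$ (using $|\partial_\theta|=O(r)$ near $\lambda_t$ and the vanishing of $a$ on $Q_t$ and at $\lambda_t$); and that along $\partial D_t=\{r=1\}$ it is transverse and inward since $a(1,\theta)>0$. This gives properties (1)--(3).

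For (4) I would analyze the flow. In each sector $\dot\theta=w(\theta)$ drives $\theta$ monotonically to the nearer wall (or freezes it on the ridge $\theta=\tfrac12$), while $\dot r=-a(r,\theta)<0$ wherever $a>0$. Choosing $a$ to vanish to first order in $\theta$ as $\theta\to0,1$ (for $r<\rho$), to first order in $r-\rho$ as $r\to\rho^+$ along $\theta\in\{0,1\}$, and like $r$ as $r\to0$, I would verify that every orbit starting in $\mathring D_t\setminus Q_t$ stays there for all $\tau\ge0$ and converges, in infinite time, to a point of $Q_t$: to a point of $\delta(y_j(t))$ or $\delta(y_{j+1}(t))$ according as $\theta\to0$ or $1$, and to $\lambda_t$ along a ridge. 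Orbits issuing from $\partial D_t$ can neither recross $\partial D_t$ nor reach $Q_t$ in finite time, so $\xi_t(u)=\lim_{\tau\to\infty}q_t(\tau,u)$ is defined; continuous dependence on initial data together with the uniformity of the convergence yields continuity of $\xi_t$. Tracking endpoints — with $m_j$ the midpoint of the arc of $\partial D_t$ inside $R_j$ — one gets $z_j\mapsto y_j(t)$, $m_j\mapsto\lambda_t$, and $\xi_t$ mapping the boundary arc $[m_{j-1},m_j]$ (which contains $z_j$) onto $\delta(y_j(t))$ as a ``tent''; hence $\xi_t$ is surjective onto $Q_t=\bigcup_j\delta(y_j(t))$, and — arranging the monotone $r$-profile of $a$ to be piecewise affine, or passing to suitable subdivisions of $\partial D_t$ and $Q_t$ — it is simplicial.

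The main obstacle will be the single choice of the weight $a$: it must vanish on $Q_t$ and at $\lambda_t$, for $v_t$ to be continuous on $D_t$ and null on $Q_t$, yet vanish only to the right finite order there, so that orbits asymptote to $Q_t$ in \emph{infinite} time — needed for $\lim_{\tau\to\infty}$ to be meaningful and for $\xi_t$ to be continuous — while still being strictly positive and inward along $\partial D_t$ and, crucially, patching smoothly across the phantom cuts, which forces the infinite tangency of $a$ and $w$ to constants at $\theta\in\{0,1\}$. The other delicate point is checking that $\xi_t$ is genuinely continuous (and the limit genuinely exists) at the ridge points and at the $z_j$, where $\xi_t$ only folds and is not locally injective; conditions (1)--(3) and surjectivity should then be routine.
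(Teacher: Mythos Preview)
Your route differs substantially from the paper's. The paper never decomposes $D_t$ into sectors: it takes a thin tubular neighbourhood $\mathcal R_t$ of $Q_t$, cuts it into ``rectangles'' $R_t^j$ along each edge and caps $M_t^j$ at the tips, sends each piece to a standard model $\mathbb I\times\mathbb I$ (resp.\ a half-disk) via the Riemann map together with Carath\'eodory's boundary extension, and pulls back the negative gradient of the distance to one side. On $D_t\setminus\operatorname{int}\mathcal R_t'$ it simply takes $-\nabla d(\cdot,\lambda_t)$, and glues the two by a partition of unity. So the paper's argument is modular and never has to match anything across ``phantom cuts''; the price is invoking conformal uniformization. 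Your explicit polar formula is more elementary and would be attractive if it closed.

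The genuine gap is the unresolved tension between smoothness on $D_t\setminus Q_t$ and the dynamics producing $\xi_t$. In the true angle $\phi$, with sector width $\beta_j$, the $\partial_\phi$-component of $v_t$ is $-w(\theta)\beta_j$, and its $m$-th $\phi$-derivative at a phantom cut equals $-w^{(m)}(1)\beta_j^{\,1-m}$ from one side and $-w^{(m)}(0)\beta_{j+1}^{\,1-m}$ from the other. For $w=-\sin 2\pi\theta$ these disagree already at $m=3$ unless all $\beta_j$ coincide, and for $k\ge4$ no diffeomorphism of the disk can equalize the angles at $\lambda_t$. You later invoke infinite-order flatness of $w$ at $0,1$, which does cure smoothness, but then $\theta(\tau)\to0$ slower than any exponential, and with your declared first-order vanishing $a\sim c\,\theta$ one gets $\int_0^\infty a\,d\tau=\int_0^{\theta_0}\theta\,|w(\theta)|^{-1}\,d\theta=+\infty$, forcing $r\to0$ along every orbit and hence $\xi_t\equiv\lambda_t$ --- not surjective. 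Making both requirements hold simultaneously means coupling the vanishing orders of $w$ at $\{0,1\}$ and of $a$ in $\theta$ (and in $r-\rho$) so that the field is $C^\infty$ across the cuts \emph{and} the radial limit $r_\infty\in(0,\rho]$ depends nontrivially and continuously on the initial boundary point; that is the real content, and it is not yet done. Two smaller slips: with your sign conventions $\dot\theta=-w$, which makes the ridge $\theta=\tfrac12$ attracting rather than repelling; and a single common $\rho$ is only available after a normalization you describe as optional.
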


\begin{proof}
Let $d_t: D_t \to \bR$ be the function given by the distance to the set $Q_t$, that is $d_t(x) := d(x, Q_t)$. Consider the small closed neighborhood of $Q_t$ in $D_t$ given by: 
$$\mathcal{R}_t:= (d_t)^{-1}\big( [0,r] \big) $$
for some small $r>0$. 

\medskip
Notice that the boundary of $\mathcal{R}_t$ is a Jordan curve. In fact, let $d_t^j$ be the function given by the distance to $\delta(y_j(t))$, which is smooth outside $\delta(y_j(t))$ since $\delta(y_j(t))$ is a smooth manifold (see \cite{Fo} for instance). So $(d_t^j)^{-1}\big( (0,r] \big)$ is a smooth manifold $\mathcal{R}_t(j)$ with boundary diffeomorphic to the circle. Clearly, $\mathcal{R}_t = \cup_{j=1}^k \mathcal{R}_t(j)$ and its boundary is a piecewise linear closed curve.

\medskip
We endow $\mathcal{R}_t$ with the vector field $v_1$ given as follows:

\medskip
Let us decompose $\mathcal{R}_t$ in the sets $R_t^1, \dots, R_t^k, M_t^1, \dots, M_t^k$ as indicated in Figure \ref{Fig2} below.

\begin{figure}[!h]
\centering
\includegraphics[scale=1]{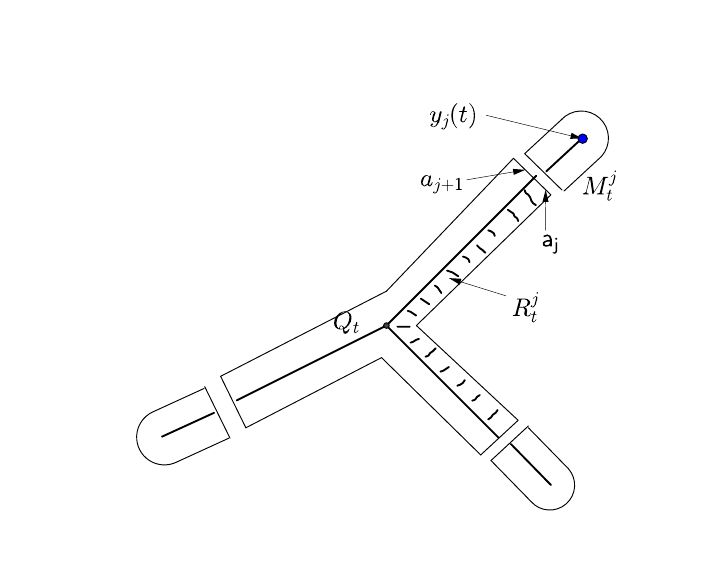}
\caption{}
\label{Fig2}
\end{figure}

We endow each ``rectangular'' region $R_t^j$ with an integrable vector field $\omega_j$ as follows: 

\medskip
Let $\mathbb{I}$ denote the closed interval $[0,1]$. Using the Riemann mapping and Carath\'eodory's theorem (see sections 17.3 and 17.20 of \cite{NP}, for instance, or see \cite{Ca} for the original proof), we can consider a homeomorphism $\tilde{h}_j: R_t^j \to \mathbb{I} \times \mathbb{I}$ such that: 
\begin{itemize}
\item[$\circ$] $\tilde{h}_j$ takes the side $Q_t \cap R_t^j$ onto $\mathbb{I} \times \{0\}$;
\item[$\circ$] $\tilde{h}_j$ takes the side $a_j$ onto $\{0\} \times \mathbb{I}$;
\item[$\circ$] $\tilde{h}_j$ restricts to a diffeomorphism $h_j: int(R_t^j) \to (0,1) \times (0,1)$, where $int(R_t^j)$ denotes the interior of $R_t^j$.
\end{itemize}

Also let $\rho: \mathbb{I} \times \mathbb{I} \to \bR$ be the function given by the distance to $\mathbb{I} \times \{0\}$, that is, $\rho(u,v) := v$. Since $\rho$ is smooth, the vector field $\omega$ on $\mathbb{I} \times \mathbb{I}$ given by the opposite of the gradient vector field associated to $\rho$ is continuous, smooth, non-zero outside $\mathbb{I} \times \{0\}$, zero on $\mathbb{I} \times \{0\}$, and each orbit associated to it has a limit point on $\mathbb{I} \times \{0\}$.

\medskip
So the pull-back of $\omega$ by $h_j$ gives a vector field $\omega_j$ on $R_t^j$ that is continuous, smooth and non-zero outside $Q_t \cap R_t^j$, zero on $Q_t \cap R_t^j$, and each orbit associated to it has a limit point on $Q_t \cap R_t^j$. 

\medskip
Since $h_j^{-1}$ sends each integral curve associated to $\omega$ to an integral curve associated to $w_j$ and since $h_j^{-1}$ sends the side $\{0\} \times \mathbb{I}$ to the side $a_j$, then the fact that $\{0\} \times \mathbb{I}$ coincides with an integral curve implies that the line segment $a_j$ coincides with an integral curve associated to $w_j$.

\medskip
Putting all the rectangles $R_t^j$ together, each of them endowed with the corresponding vector field $\omega_j$, we get a subset $R_t$ of $\mathcal{R}_t$ endowed with a continuous vector field $w$ that is smooth and non-zero outside $Q_t$, zero on $Q_t$, and each orbit associated to it has a limit point on $Q_t$. Moreover, each line segment $\tilde{a}_j = a_j \cup a_{j+1}$ coincides with an integral curve associated to $w$ (for consistence of the notation, we set $a_{k+1}:= a_1$).

\medskip
Now we do a similar process to endow each component $M_t^j$ with a vector field $w_j$ that is continuous, smooth and non-zero outside $Q_t \cap M_t^j$, zero on $Q_t \cap M_t^j$, and each orbit associated to it has a limit point on $Q_t \cap M_t^j$. Even more, we will do it in such a way that $w_j$ ``glues smoothly" with the vector field $w$ of $R_t$.

\medskip
Consider the set $A \subset \bR^2$ given by the union of the rectangle $\mathbb{I} \times \mathbb{I}$ and a semi-disk as in Figure \ref{FigA}. Also consider a diffeomorphism $g_j: M_t^j \to A$ that takes the side $\tilde{a}_j$ onto $\{0\} \times \mathbb{I}$ and that takes $Q_t \cap M_t^j$ onto $\mathbb{I} \times \{1/2\}$.

\medskip
Now consider the subsets $A_1 := [0,2/3] \times [0,1]$ and $A_2 := A \setminus \big( [0,1/3) \times [0,1] \big)$ of $A$. 

\medskip
Recall that the side $\tilde{a}_j$ of $M_t^j$ is endowed with a vector field given by the restriction of the vector field $\omega$ to $\tilde{a}_j \subset R_t$. This naturally gives a vector field $\upsilon_1$ in $A_1$ that is continuous, smooth and non-zero outside $[0,2/3] \times \{1/2\}$, zero on $[0,2/3] \times \{1/2\}$, and each integral curve associated to it is a line segments $\{u\} \times \mathbb{I}$, for some $u \in [0,2/3]$, which has a limit point $(u,0)$. 

\medskip
On the other hand, we endow $A_2$ with the vector field $\upsilon_2$ given by the opposite of the gradient vector field of the function distance to the line segment $[1/3,1] \times \{1/2\}$. It is continuous, smooth and non-zero outside $[1/3,1] \times \{1/2\}$, zero on $[1/3,1] \times \{1/2\}$, and each integral curve associated to it has a limit point in $[1/3,1] \times \{1/2\}$. 

\medskip
Then we glue the vector fields $\upsilon_1$ and $\upsilon_2$, using a partition of the unity, to obtain a vector field $\upsilon$ on $A$ that is continuous, smooth and non-zero outside $\mathbb{I} \times \{1/2\}$, zero on $\mathbb{I} \times \{1/2\}$, and each orbit associated to it has a limit point on $\mathbb{I} \times \{1/2\}$. 

\medskip
Then we define the vector field $w_j$ on $M_t^j$ as the pull-back of $\upsilon$ by $g_j$.

\begin{figure}[!h]
\centering
\includegraphics[scale=0.15]{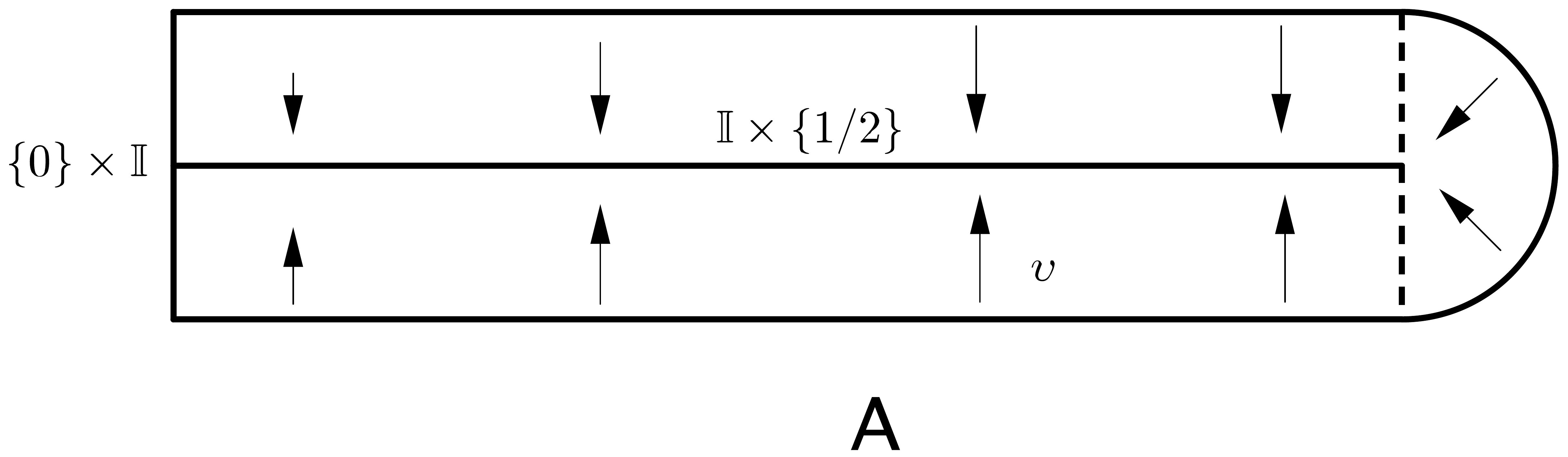}
\caption{}
\label{FigA}
\end{figure}

\medskip
So gluing the vector field $w$ and the vector fields $w_j$ we obtain a vector field $v_1$ on $\mathcal{R}_t$ that is continuous, smooth and non-zero outside $Q_t$, zero on $Q_t$, and each orbit associated to it has a limit point on $Q_t$.

\medskip
On the other hand, let $r'$ be a small real number with $0<r'<r$ and with $r-r' \ll 1$, and set $\mathcal{R}_t':= (d_t)^{-1}\big( [0,r'] \big) $, so $\mathcal{R}_t' \subset \mathcal{R}_t$. Also denote by $int(\mathcal{R}_t)$ the interior of $\mathcal{R}_t$ and by $int(\mathcal{R}_t')$ the interior of $\mathcal{R}_t'$. We endow $D_t \backslash int(\mathcal{R}_t')$ with the vector field $v_2$ given by the opposite of the gradient vector field of the function on $D_t \backslash int(\mathcal{R}_t')$ given by the distance to the point $\lambda_t$.

\medskip
Finally, one can check that the vector fields $v_1$ and $v_2$ never have opposite directions. So the vector field $v_t$ is obtained by gluing the vector fields $v_1$ and $v_2$, using a partition of unity. That is, we consider a pair $(\rho_1, \rho_2)$ of continuous functions from the compact disk $D_t$ to the closed unit interval $[0,1]$ such that:
\begin{itemize}
\item[$\circ$] for every point $p \in D_t$ one has that $\rho_1(p) + \rho_2(p) = 1$ ;
\item[$\circ$] the support of $\rho_1$ is contained in $D_t \backslash \mathcal{R}_t'$ ;
\item[$\circ$] the support of $\rho_2$ is contained in $int(\mathcal{R}_t)$ .
\end{itemize}
Hence for any $p \in D_t \backslash int(\mathcal{R}_t)$ we have that $\big( \rho_1(p), \rho_2(p) \big) = (1,0)$ and for any $p \in \mathcal{R}_t'$ we have that $\big( \rho_1(p), \rho_2(p) \big) = (0,1)$. So we set $v_t := \rho_1 v_1 + \rho_2 v_2$ . 

\medskip
Clearly, $v_t$ is a continuous vector field on $D_t$ which is smooth and non-zero on $D_t \setminus Q_t$, zero on $Q_t$ and transversal to $\partial D_t$, pointing inwards. Moreover, each orbit associated to $v_t$ has a limit point in $Q_t$.

\medskip
So the flow $q_t: [0, \infty) \ \times \ (D_t \backslash Q_t) \to D_t \backslash Q_t$ associated to $v_1$ defines a continuous and surjective map:
$$
\begin{array}{cccc}
\xi'_t \ : & \! \partial D_t & \! \longrightarrow & \! Q_t \\
& \! u & \! \longmapsto & \! \displaystyle \lim_{\tau \to \infty} q_t(\tau,u)
\end{array}
.$$
\end{proof}

\begin{remark}
Lemma \ref{lemma_v_t} is still true if the set $Q_t$ is takes as the union of simple paths that intersect only at a point $\lambda_t$, instead of considering line segments. It is enough to consider an homeomorphism of the disk $D_t$ on itself which is a diffeomorphism outside the point $\lambda_t$.
\end{remark}

One has:

\begin{prop} \label{prop_vector_field}
We can choose a lifting of the vector field $v_t$ to a continuous vector field $E_t$ in $X_t$ so that: 
\begin{enumerate}
\item It is tangent to each stratum of the interior of $X_t$ (see Lemma \ref{lemma_strata});
\item It is smooth on each stratum of the induced stratification of $X_t \backslash P_t$;
\item It is integrable on $X_t \backslash P_t$;
\item It is zero on $P_t$;
\item It is transversal to $\partial X_t$ and points inwards;
\item The flow $\tilde{q}_t: [0, \infty) \ \times \ (X_t \backslash P_t) \to X_t$ associated to $E_t$ defines a map:
$$
\begin{array}{cccc}
\tilde{\xi}_t \ : & \! \partial X_t & \! \longrightarrow & \! P_t \\
& \! z & \! \longmapsto & \! \displaystyle \lim_{\tau \to \infty} \tilde{q}_t(\tau,z)
\end{array}
$$
such that $\tilde{\xi}_t$ is continuous, simplicial and surjective; 
\item The fiber $X_t$ is homeomorphic to the mapping cylinder of $\tilde{\xi}_t$.
\end{enumerate}
\end{prop}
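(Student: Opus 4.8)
The plan is to lift the vector field $v_t$ of Lemma \ref{lemma_v_t} through the finite stratified map $\ell_t : X_t \to D_t$ in three stages, corresponding to the three regions $D_t \setminus \mathcal{R}_t$, the collar $\mathcal{R}_t \setminus Q_t$, and $Q_t$ itself, and then to glue these local lifts with a partition of unity subordinate to the same cover used in the construction of $v_t$. Away from $Q_t$, the map $\varphi_t : X_t \setminus \ell_t^{-1}(\{y_1(t),\dots,y_k(t)\}) \to D_t \setminus \{y_1(t),\dots,y_k(t)\}$ is a proper stratified submersion, so Lemma \ref{lemma_Ve} applies directly and produces an integrable stratified lift of $v_t|_{D_t \setminus \mathcal{R}_t'}$ tangent to every stratum; this handles properties (1), (2), (3) on that part. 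Over the collar region, one uses that $\ell_t$ is finite and generically a covering to lift $v_1$ fiberwise; near the branch points $y_j(t) \in Q_t$ one invokes the detailed construction of Lemma \ref{prop_Et} (promised in section \ref{section_prop_Et}), which supplies the vector field with the correct vanishing behaviour over $P_t = \ell_t^{-1}(Q_t)$. Taking $E_t := \tilde{\rho}_1 E_1 + \tilde{\rho}_2 E_2$ with $\tilde{\rho}_i = \rho_i \circ \ell_t$ gives a continuous vector field on $X_t$ which is zero exactly on $P_t$, proving (4), and which inherits transversality to $\partial X_t$ pointing inwards from the corresponding property of $v_t$ together with transversality of $\phi^{-1}(\partial D_t)$ and $\BS_\e(x)$ to the strata, proving (5).

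For property (6), I would argue as follows. Since $E_t$ is integrable on $X_t \setminus P_t$ and projects to $v_t$, each integral curve of $E_t$ projects to an integral curve of $v_t$, which by Lemma \ref{lemma_v_t}(4) has a limit point in $Q_t$. Because $\ell_t$ is finite and proper, and because the lift near $P_t$ was built (via Lemma \ref{prop_Et}) so that the orbit's $\ell_t$-image determines the limiting sheet, the orbit $\tilde q_t(\tau,z)$ converges as $\tau \to \infty$ to a well-defined point of $P_t$; this defines $\tilde\xi_t$. Continuity of $\tilde\xi_t$ reduces to continuity of $\xi_t$ together with continuous dependence of the lift, which is exactly the content built into Lemma \ref{prop_Et}; surjectivity follows since $\ell_t \circ \tilde\xi_t = \xi_t \circ \ell_t|_{\partial X_t}$ up to the covering and $\xi_t$ is surjective onto $Q_t$; and simpliciality follows from the fact that each $1$-simplex of $P_t$ lies in a single stratum and is the image, under $\tilde\xi_t$, of a union of simplices in $\partial X_t$ (this uses that $P_t$ is adapted to $\mathcal{S}$ and that $\tilde\xi_t$ is stratified).

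Finally, for property (7), the standard fact is that if a compact manifold-with-corners $X_t$ carries an integrable vector field, inward-pointing and transversal on $\partial X_t$, vanishing precisely on a subpolyhedron $P_t$, whose flow sends every non-$P_t$ point to a limit in $P_t$ in finite-modulo-reparametrization time, then the flow gives a homeomorphism between $X_t$ and the mapping cylinder of $\tilde\xi_t : \partial X_t \to P_t$. Concretely, I would rescale the flow parameter to a finite interval and define the homeomorphism $X_t \cong \big(\partial X_t \times [0,1]\big) \sqcup P_t / \!\sim$ by $z \mapsto$ (its backward endpoint on $\partial X_t$, its normalized forward travel time), sending $P_t$ to itself by the identity; injectivity and bicontinuity come from the no-opposite-directions gluing and the uniform contraction onto $P_t$ near $P_t$ guaranteed by Lemma \ref{prop_Et}. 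The main obstacle in this whole argument is the behaviour of the lift over $P_t$: ensuring that the lifted flow lines genuinely converge (and converge continuously in $z$) to single points of the polyhedron $P_t$, rather than spiralling or splitting among the finitely many sheets of $\ell_t$ meeting at a branch point of $Q_t$. This is precisely why the explicit local model of section \ref{section_prop_Et} is needed, and I would isolate that difficulty entirely into the statement of Lemma \ref{prop_Et}, quoting it as a black box here.
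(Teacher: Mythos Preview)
Your proposal has a structural problem: Proposition \ref{prop_vector_field} belongs to the two-dimensional \emph{base case} of the paper's induction, while Lemma \ref{prop_Et} is part of the inductive step in dimension $n \geq 3$, and its proof in Section \ref{section_prop_Et} explicitly uses the induction hypothesis in dimension $n-1$. Quoting Lemma \ref{prop_Et} as a black box here is therefore circular---or at best logically out of order.

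More to the point, you overestimate the difficulty. In dimension two, $X_t$ is a complex curve and $\ell_t : X_t \to D_t$ is a \emph{finite} map; its fibers are discrete sets. The paper's proof exploits this directly and needs neither a three-region decomposition nor any special local model. One simply lifts $v_t$ via Lemma \ref{lemma_Ve} over $D_t \setminus \{y_1(t),\dots,y_k(t)\}$ and extends by zero on $P_t$; this already gives (1)--(5). For (6), fix $z \in \partial X_t$; the projected orbit converges to some $p \in Q_t$ by Lemma \ref{lemma_v_t}. Since $\ell_t^{-1}(p) = \{\tilde p_1,\dots,\tilde p_r\}$ is finite, for $U$ small enough around $p$ the preimage $\ell_t^{-1}(U)$ splits into disjoint connected components $\tilde U_1,\dots,\tilde U_r$ with $\tilde p_j \in \tilde U_j$. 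Because $E_t$ lifts $v_t$, the lifted orbit is eventually confined to a single $\tilde U_j$ and hence converges to $\tilde p_j$. This elementary covering argument is what rules out the ``spiralling or splitting among sheets'' you worry about---no appeal to Section \ref{section_prop_Et} is needed. Your treatment of (7) via the flow map $[0,\infty] \times \partial X_t \to X_t$ is essentially the paper's.
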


\begin{proof}
Recall from Proposition \ref{prop_discriminant} that the restriction $\ell_t$ of the linear form $\ell$ to the Milnor fiber $X_t$ induces a stratified submersion: 
$$\varphi_t: X_t \setminus \ell_t^{-1} \big( \{y_1(t), \dots, y_k(t) \} \big) \to D_t \setminus \{y_1(t), \dots, y_k(t) \} \, .$$ 
So we can lift the vector field $v_t$ to a continuous vector field $E_t$ in $X_t$ that satisfies properties (1) to (5). See Lemma \ref{lemma_Ve}.

\medskip
Let us show that we can choose $E_t$ satisfying also (6). Fix $z \in \partial X_t$. We want to show that $\lim_{\tau \to \infty} \tilde{q}_t(\tau,z)$ exists, that is, that there exists a point $\tilde{p} \in P_t$ such that for any open neighborhood $\tilde{U}$ of $\tilde{p}$ in $X_t$ there exists $\theta>0$ such that $\tau>\theta$ implies that $\tilde{q}_t(\tau,z) \in \tilde{U}$. 

\medskip
From Lemma \ref{lemma_v_t} we know that there exists $p \in Q_t$ such that $\lim_{\tau \to \infty} q_t(\tau,\ell_t(z)) = p$, where $q_t: [0, \infty) \ \times \ (D_t \backslash Q_t) \to D_t$ is the flow associated to the vector field $v_t$. So for any small open neighborhood $U$ of $p$ in $D_t$ there exists $\theta>0$ such that $\tau>\theta$ implies that $q_t(\tau,\ell_t(z)) \in U$. Setting $\{ \tilde{p}_1, \dots, \tilde{p}_r \} := \ell_t^{-1}(p)$, we can consider $U$ sufficiently small such that there are disjoint connected components $\tilde{U}_1, \dots, \tilde{U}_r$ of $\ell_t^{-1}(U)$ such that each $\tilde{U}_j$ contains $\tilde{p}_j$. 

\medskip
Since $E_t$ is a lifting of $v_t$, we have that $q_t(\tau,\ell_t(z)) = \ell_t(\tilde{q}_t(\tau,z))$ for any $\tau \geq 0$. So $\tau>\theta$ implies that $\ell_t^{-1} \big( \ell_t(\tilde{q}_t(\tau,z)) \big) \subset \ell_t^{-1}(U)$. Hence for some $j \in \{1, \dots, r\}$ we have that $\tilde{q}_t(\tau,z) \in \tilde{U}_j$. Therefore $\lim_{\tau \to \infty} \tilde{q}_t(\tau,z) = \tilde{p}_j$. This proves $(6)$.

\medskip
Now we show that $X_t$ is homeomorphic to the mapping cylinder of $\tilde{\xi}_t$. In fact, the integration of the vector field $E_t$ on $X_t \backslash P_t$ gives a surjective continuous map:
$$\a: [0, \infty] \ \times \ \partial X_t \to X_t$$
that restricts to a homeomorphism:
$$\a_|: [0, \infty) \ \times \ \partial X_t \to X_t \backslash P_t \, .$$
Since the restriction $\a_{\infty}: \{\infty\} \times \partial X_t \to P_t$ is equal to $\tilde{\xi}_t$, which is surjective, it follows that the induced map: 
$$[\a_\infty]:  \big( (\{\infty\} \times \partial X_t ) \big/ \sim \big)  \to P_t$$
is a homeomorphism, where $\sim$ is the equivalent relation given by identifying $(\infty, z) \sim (\infty,z')$ if $\a_\infty(z) = \a_\infty(z')$.
Hence the map:
$$[\a]:  \big( ([0,\infty] \times \partial X_t) \big/ \sim \big)  \to X_t$$
induced by $\a$ defines a homeomorphism between $X_t$ and the mapping cylinder of $\tilde{\xi}_t$. This proves $(7)$.

\end{proof}

%%%%%%%%%%
\medskip
\subsection{Second step: constructing the collapsing map}
\label{subsection_ss}
\ \\

First, let us recall that $X_t := \B_\e \cap \phi^{-1}(\BD_{\eta_1} \times \{t\})$ and that the map: 
$$\phi = (\ell,f): X \to \bC^2$$ 
induces a stratified submersion $\phi_|: \B_\e \cap X \cap \phi^{-1}(\BD_{\eta_1} \times \BD_{\eta_2} \setminus \Delta) \to \BD_{\eta_1} \times \BD_{\eta_2} \setminus \Delta$ (see Proposition \ref{prop_discriminant}). 

\medskip
Let $\gamma$ be a simple path in $\BD_{\eta_2}$ joining $0$ and some $t_0 \in \partial \BD_{\eta_2}$, such that $\gamma$ is transverse to $\partial \BD_{\eta_2}$. We want to describe the collapsing of $f$ along $\gamma$, that is, how $X_t$ degenerates to $X_0$ as $t \in \gamma$ goes to $0$.

\medskip
The first step is to construct the vanishing polyhedron $P_t$ simultaneously, for all $t$ in $\gamma$.

\medskip
Recall from Lemma \ref{lemma_polar_curve} that the polar curve $\Gamma$ is not contained in $f^{-1}(0)$. Hence $\Delta$ is not contained in $ \BD_{\eta_1} \times \{0\}$, and so the natural projection $\pi: \BD_{\eta_1} \times \BD_{\eta_2} \to  \BD_{\eta_2}$ restricted to $\Delta$ induces a ramified covering:
$${\pi}_|: \Delta \to \BD_{\eta_2} $$
of degree $k$, whose ramification locus is $\{0\} \subset \Delta$. Hence the inverse image of $\gamma \backslash \{0\}$ by this covering defines $k$ disjoint simple paths in $\Delta$, and each one of them is diffeomorphic to $\gamma \backslash \{0\}$. 

\medskip
Let $\Lambda$ be a simple path in $\BD_{\eta_1} \times \gamma$ such that $\pi(\Lambda) = \gamma$ and such that $\Lambda \cap \Delta = \{0\}$. Recall that $\Delta \cap (\BD_{\eta_1} \times \{t\}) = \{y_1(t), \dots, y_k(t)\}$. Hence for each $t \in \gamma$ the point $\lambda_t = \Lambda \cap (\BD_{\eta_1} \times \{t\})$ is a point in $(\BD_{\eta_1} \times \{t\}) \backslash \{y_1(t), \dots, y_k(t)\}$.

\medskip
We can choose the simple paths $\delta(y_j(t))$ joining $\lambda_t$ and $y_j(t)$, for each $j = 1, \dots, k$, in such a way that the set:
$$T_j := \bigcup_{t \in \gamma} \delta(y_j(t)) \,,$$
forms a triangle and $T_j \backslash \{0\}$ is differentially immersed in $\BD_{\eta_1} \times \gamma$, for each $j \in \{1, \dots, k\}$.
The intersection of any two triangles $T_j$ and $T_{j'}$ is the path $\Lambda$, for $j,j' \in \{1, \dots, k \}$ with $j \neq j'$. 

\medskip
Set $Q := \displaystyle\bigcup_{j=1}^k T_j$ and $P_\gamma := \phi^{-1}(Q)$, which we call a {\it collapsing cone for $f$ along $\gamma$}. See Figure \ref{Fig3}.

\begin{figure}[!h]
\centering
\includegraphics[scale=0.7]{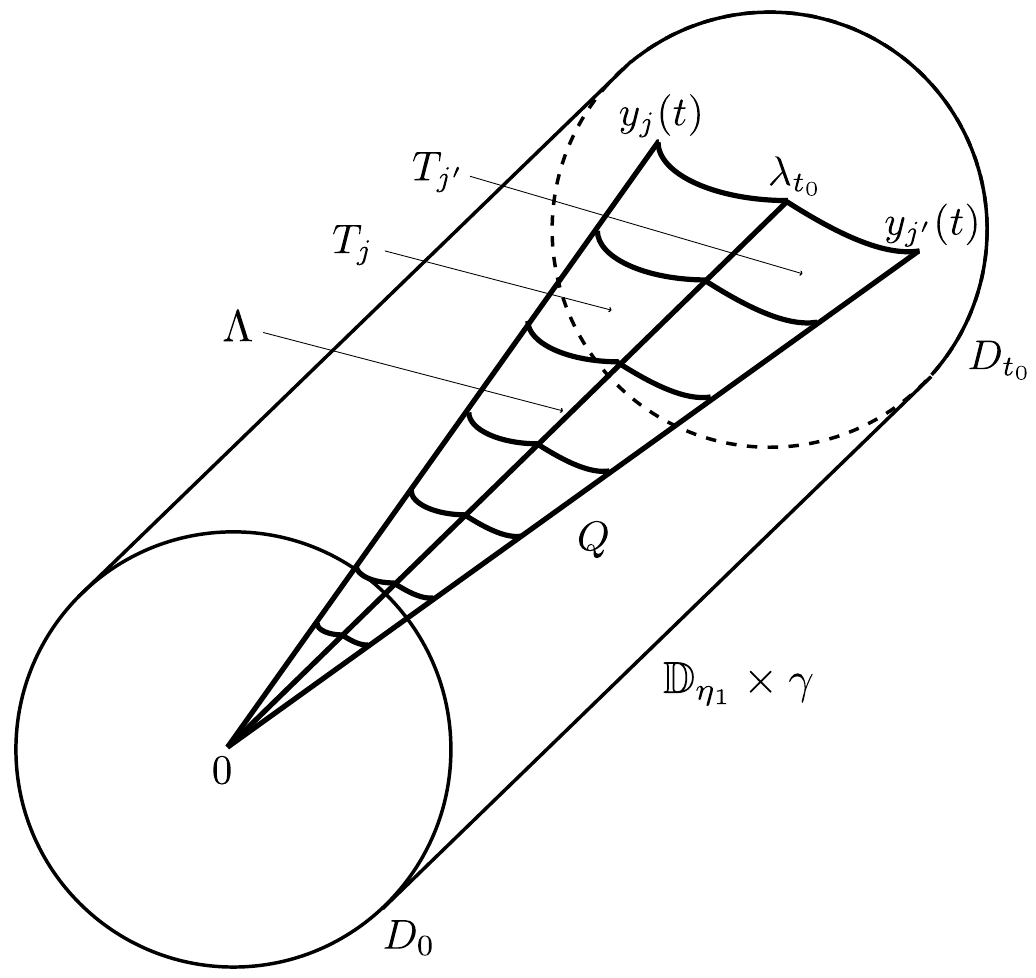}
\caption{}
\label{Fig3}
\end{figure}

\medskip
So $P_\gamma$ is a polyhedron adapted to the stratification induced by $\mathcal{S}$ such that the intersection $P_\gamma \cap X_t$ is a vanishing polyhedron $P_t$, for any $t \in \gamma \backslash \{0\}$, and $P_\gamma \cap X_0 = \{0\}$.

\medskip
\medskip
The second step is to construct a system of closed neighborhoods $V_A(Q)$ of $P_\gamma$ in $\BD_{\eta_1} \times \gamma$, for any $A \geq 0$, such that:
\begin{itemize}
\item[$(i)$] The boundary $\partial V_A(Q)$ of $V_A(Q)$ is smooth, for any $A \geq 0$;
\item[$(ii)$] $V_0(Q) = \BD_{\eta_1} \times \gamma$;
\item[$(iii)$] For any $A_1 > A_2$ one has $V_{A_1}(Q) \subset V_{A_2}(Q)$;
\item[$(iv)$] For any open neighborhood $U$ of $P_\gamma$ in $\BD_{\eta_1} \times \gamma$, there exists $A_U \geq 0$ sufficiently big such that $V_{A_U}(Q)$ is contained in $U$.
\end{itemize}
This system of neighborhoods will be used in the third step.

\medskip
To construct such a system of neighborhoods, we first consider a vector field $\mathcal{V}$ in $\BD_{\eta_1} \times \gamma$ that deformation retracts $\BD_{\eta_1} \times \gamma$ onto $Q$, just like in Lemma \ref{lemma_v_t}. That is, we consider a continuous vector field $\mathcal{V}$ in $\BD_{\eta_1} \times \gamma$ such that:
\begin{itemize}
\item[$\bullet$] It is smooth (and hence integrable) on $(\BD_{\eta_1} \times \gamma) \setminus Q$;
\item[$\bullet$] It is zero on $Q$;
\item[$\bullet$] It is transversal to $\partial \BD_{\eta_1} \times \gamma$; 
\item[$\bullet$] The projection of $\mathcal{V}$ onto $\gamma$ is zero.
\end{itemize}

\medskip
Just as we did in Proposition \ref{prop_vector_field}, we can use Proposition \ref{prop_discriminant} to obtain:

\begin{prop}
The vector field $\mathcal{V}$ can be lifted to a continuous vector field $E_\gamma$ in $X_\gamma$ such that:
\begin{itemize}
\item[$(i)$] For any $t \in \gamma \backslash \{0\}$, the restriction of $E_\gamma$ to $X_t$ gives a vector field $E_t$ as in the proposition above, relatively to the polyhedron $P_t = P_\gamma \cap X_t$ ;
\item[$(ii)$] The vector field $E_\gamma$ is integrable, stratified and non-zero on $X_\gamma \backslash P_\gamma$, tangent to each stratum of the interior of $X_\gamma \backslash P_\gamma$, zero on $P_\gamma$ and transversal to $\partial X_\gamma$, points inwards;
\item[$(iii)$] The flow $w: [0, \infty) \ \times \ \big( (\BD_{\eta_1} \times \gamma) \backslash Q \big) \to \BD_{\eta_1} \times \gamma$ associated to $\mathcal{V}$ defines a map:
$$ 
\begin{array}{cccc}
\xi \ : & \! \partial \BD_{\eta_1} \times \gamma & \! \longrightarrow & \! Q \\
& \! z & \! \longmapsto & \! \displaystyle \lim_{\tau \to \infty} w(\tau,z)
\end{array} 
$$
that is continuous, simplicial and surjective.
\end{itemize}
\end{prop}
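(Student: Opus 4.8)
The plan is to imitate the proof of Proposition \ref{prop_vector_field}, now carrying the parameter $t \in \gamma$ along. Write $X_\gamma = \B_\e(x) \cap X \cap \phi^{-1}(\BD_{\eta_1} \times \gamma)$ and note first that, since each triangle $T_j$ contains the arc $\{y_j(t) : t \in \gamma\}$ swept out by the discriminant point, one has $\Delta \cap (\BD_{\eta_1} \times \gamma) \subset Q$; thus over $(\BD_{\eta_1} \times \gamma) \setminus Q \subset (\BD_{\eta_1} \times \gamma) \setminus \Delta$ Proposition \ref{prop_discriminant} provides a proper stratified submersion $\phi_|$. By Lemma \ref{lemma_Ve} (applied, exactly as in Proposition \ref{prop_vector_field}, to this submersion, if one wishes over a small open thickening of $\BD_{\eta_1} \times \gamma$ in $\bC^2$) I can lift $\mathcal{V}|_{(\BD_{\eta_1} \times \gamma) \setminus Q}$ to an integrable stratified vector field on $X_\gamma \setminus P_\gamma$, tangent to the interior strata (Lemma \ref{lemma_strata}) and tangent to $\BS_\e(x)$ on the spherical part of $\partial X_\gamma$, and then extend it by zero across $P_\gamma$ to obtain $E_\gamma$. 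That this extension is continuous, and uniform as $t \to 0$, is checked as in Proposition \ref{prop_vector_field}, the essential point being that $\ell$, hence $\phi$, is finite on $\Gamma$, so that $\phi^{-1}$ of a shrinking neighbourhood of $Q$ is a shrinking neighbourhood of $P_\gamma$.

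I would then verify properties (i) and (ii). Since the projection of $\mathcal{V}$ onto $\gamma$ is zero, $\mathcal{V}$ is tangent to every slice $\BD_{\eta_1} \times \{t\}$; as $f$ is the second coordinate of $\phi$, any lift is tangent to $X_t = \phi^{-1}(\BD_{\eta_1} \times \{t\}) \cap \B_\e(x)$, so $E_t := E_\gamma|_{X_t}$ is a lift of $v_t := \mathcal{V}|_{\BD_{\eta_1} \times \{t\}}$ through the stratified submersion $\varphi_t$, and hence satisfies conditions (1) through (6) of Proposition \ref{prop_vector_field} relative to $P_t = P_\gamma \cap X_t$; this is (i). For (ii) one merely collects the properties already built into $E_\gamma$: integrability and the stratified and interior-tangency conditions from Lemma \ref{lemma_Ve} and Lemma \ref{lemma_strata}, vanishing on $P_\gamma$ from the extension by zero, and the transversal-inward behaviour along $\phi^{-1}(\partial \BD_{\eta_1} \times \gamma)$ from the transversality of $\mathcal{V}$ to $\partial \BD_{\eta_1} \times \gamma$, together with tangency of the lift to $\BS_\e(x)$ and to the end slices $X_0, X_{t_0}$.

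For (iii) I would, as in Lemma \ref{lemma_v_t}, further refine the choice of $\mathcal{V}$ so that in addition every orbit of $\mathcal{V}$ in $(\BD_{\eta_1} \times \gamma) \setminus Q$ has a limit point on $Q$. Because the $\gamma$-component of $\mathcal{V}$ vanishes, the flow $w$ preserves each slice $\BD_{\eta_1} \times \{t\}$; for $t \neq 0$ the slice is exactly the situation of Lemma \ref{lemma_v_t}, with $Q \cap (\BD_{\eta_1} \times \{t\})$ a union of segments through $\lambda_t$, so $\xi_t(u) := \lim_{\tau \to \infty} w(\tau,u)$ is defined and lands in that union, while for $t = 0$ one has $Q \cap (\BD_{\eta_1} \times \{0\}) = \{0\}$ and $\mathcal{V}$ is arranged to retract the slice to $0$. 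The map $\xi$ is then the union of the slicewise maps $\xi_t$; its continuity follows from the local triviality of $\phi_|$ over $(\BD_{\eta_1} \times \gamma) \setminus \Delta$ (Corollary \ref{cor_discriminant}) together with the control at $t = 0$ above, it is simplicial because the $\delta(y_j(t))$ were chosen to sweep out the triangles $T_j$ and each $\xi_t$ is simplicial onto its target, and it is surjective because each $\xi_t$ is.

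The step I expect to be the main obstacle is the \emph{uniformity at the special slice} $t = 0$: showing that $E_\gamma$, its flow, and the limit map $\xi$ extend continuously across $t = 0$, where the vanishing polyhedron $P_t$ collapses to the single point $\{0\}$ and the homeomorphism type of the fibre changes. Everything else is a one-parameter rerun of Proposition \ref{prop_vector_field} and Lemma \ref{lemma_v_t}; it is this continuity across the collapse that requires genuine care, and it is the reason the explicit construction of Section \ref{section_prop_Et} is needed in the higher-dimensional analogue.
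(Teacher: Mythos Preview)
Your proposal is correct and follows the same approach as the paper, which in fact gives no independent proof beyond the sentence ``Just as we did in Proposition \ref{prop_vector_field}, we can use Proposition \ref{prop_discriminant} to obtain \ldots''; your argument is a faithful and more detailed unpacking of that reference. One small remark: part (iii) is a statement purely about the vector field $\mathcal{V}$ on the base $\BD_{\eta_1}\times\gamma$ and does not involve $\phi$ or $X_\gamma$ at all, so your appeal to Corollary \ref{cor_discriminant} there is unnecessary --- the continuity and simpliciality of $\xi$ come directly from the slicewise construction of $\mathcal{V}$ \`a la Lemma \ref{lemma_v_t}, carried continuously in $t$ by the choice of the triangles $T_j$.
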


\medskip
Now, for any real number $A>0$ set:
$$V_A(Q):=  (\BD_{\eta_1} \times \gamma) \backslash w \big( [0,A) \times \partial \BD_{\eta_1} \times \gamma \big) \, ,$$
which is a closed neighborhood of $Q$ in $\BD_{\eta_1} \times \gamma$. 

\medskip
Notice that $\partial V_A(Q)$ is a smooth manifold that fibers over $\gamma$ with fiber a circle, by the restriction of the projection $\pi: \BD_{\eta_1} \times \BD_{\eta_2} \to \BD_{\eta_2}$. Moreover, $\BD_{\eta_1} \times \gamma$ is clearly the mapping cylinder of $\xi$.

\medskip
\medskip
The third step is to construct a stratified and integrable vector field $\calE$ in  $X_\gamma  \backslash P_\gamma$, whose flow gives the degeneration of $X_{t_0}$ to $X_0$.

\medskip
Since we are assuming that $X$ has dimension two and since the restriction: 
$$\phi_|: \phi^{-1}\big( (\BD_{\eta_1} \times \BD_{\eta_2}) \backslash Q \big) \to (\BD_{\eta_1} \times \BD_{\eta_2}) \backslash Q$$
is a smooth covering, it follows that $\phi^{-1}\big( \partial V_A(Q) \big) \cap \B_\e$ is a smooth submanifold of $\phi^{-1}(\BD_{\eta_1} \times \BD_{\eta_2}) \cap \B_\e$ that is a proper locally trivial fibration over $\gamma$.

\medskip
Let $\theta$ be a vector field on $\gamma$ that goes from $t_0$ to $0$ in time $a>0$ and fix $A>0$. We are going to construct a smooth and integrable vector field $\calE$ in $X_\gamma \backslash P_\gamma$ that lifts $\theta$ and such that $\calE$ is tangent to $\phi^{-1} \big( \partial V_A'(Q) \big)$, for any$A' \geq A$. We will construct it locally, that is, for each point $p \in X_\gamma \backslash P_\gamma$ we will construct a vector field $\calE_p$ in some neighborhood $U_p$ of $p$, and then we will glue all of them using a partition of unity associated to the covering given by the neighborhoods $U_p$ (for the proof of the existence of a partition of unity associated to an infinite covering, see Lemma 41.6 of \cite{Mu}, for instance).

\medskip
Each $\calE_p$ is constructed in the following way:

\begin{itemize}
\item[$(a)$] If $p \notin \phi^{-1}\big( V_A(Q) \big) \cap \B_\e$, there is an open neighborhood $U_p$ of $p$ in $X_\gamma$ that does not intersect the closed set $\phi^{-1}\big( V_A(Q) \big) \cap \B_\e$. Then we define a smooth vector field $\calE_p$ on $U_p$ that lifts $\theta$.
\item[$(b)$] If $p \in \left[ \phi^{-1}\big( V_A(Q) \big) \cap \B_\e \right] \setminus P_\gamma$, there is an open neighborhood $U_p$ of $p$ in $X_\gamma$ that does not intersect $P_\gamma$. 
We define a smooth vector field $\calE_p$ on $U_p$ that lifts $\theta$ and that is tangent to $\phi^{-1}\big( \partial V_{A'}(Q) \big)$, for any $A' \geq A$.
\end{itemize}

\medskip
Then, as we said before, the vector field $\calE$ is obtained by gluing the vector fields $\calE_p$ using a partition of unity. Notice that $\calE$ lifts $\theta$.

\medskip
Hence the flow $h: [0,a] \times X_\gamma \backslash P_\gamma \to X_\gamma \backslash P_\gamma$ associated to $\calE$ defines a stratified homeomorphism $\Psi$ from $X_{t_0} \backslash P_{t_0}$ to $X_0 \backslash \{0\}$ that extends to a continuous map from $X_{t_0}$ to $X_0$ and that sends $P_{t_0}$ to $\{0\}$. 

\medskip
So we have proved that if $(X,0)$ is a $2$-dimensional reduced equidimensional germ of complex analytic space and if $f: (X,0) \to (\bC,0)$ has an isolated singularity at $0$, then there exist:

\begin{itemize}
\item[$(i)$] a vanishing polyhedron $P_t$ of real dimension one in the Milnor fiber $X_t$ of $f$ such that $X_t$ is a regular neighborhood of $P_t$ ;
\item[$(ii)$] a continuous map $\Psi_t: X_t \to X_0$ that sends $P_t$ to $\{0\}$ and that restricts to a homeomorphism $X_t \backslash P_t \to X_0 \backslash \{0\}$.
\end{itemize}

%%%%%%%%%%%%%%%%%%%%%%%%%%%%%%%%%%%%%
\medskip
\section{Proof of the main theorem}
\label{section_main_theorem}

Now we go back to the general case of a germ of complex analytic function 
$$f: (X,x) \to (\bC,0)$$ 
at a point $x$ of a reduced equidimensional complex analytic space $X \subset \bC^N$ of any dimension. Let ${\mathcal S} = (S_\a)_{\a \in A}$ be a Whitney stratification of $X$ and suppose that $f$ has an isolated singularity at $x$ in the stratified sense. 

\medskip
In order to simplify the notations, suppose further that $x$ is the origin in $\bC^N$.

\medskip
We observe that in the following sections $\Gamma$ denotes the polar curve of $f$ relatively to a generic linear form $\ell$ and that $\Delta$ denotes the polar discriminant of $f$ relatively to $\ell$, that is, $\Delta := \phi(\Gamma)$ where $\phi$ is the stratified map:
$$\phi := (\ell,f): (X,0) \to (\bC^2,0) \, .$$ 

As before, we assume that the polar curve $\Gamma$ is non-empty.

\medskip
In the next section, we will prove the following propositions:

\begin{prop} \label{prop_a}
For any $t \in \BD_{\eta_2}^*$ there exist:
\begin{itemize}
\vspace{0.17cm}
\item[$(i)$] A polyhedron $P_t$ in the Milnor fiber $X_t := \B_\e(x) \cap f^{-1}(t)$, with dimension $\dim_\bC X -1$, adapted to the stratification of $X_t$ induced by $\mathcal{S}$, i.e., the interior of each simplex of $P_t$ is contained in a stratum of $\mathcal{S}$.
\item[$(ii)$] A continuous vector field $E_t$ in $X_t$, tangent to each stratum of the interior of $X_t$, so that:

\begin{enumerate}
\item It is smooth on each stratum of the induced stratification of $X_t \setminus P_t$;
\item It is integrable on $X_t \setminus P_t$;
\item It is zero on $P_t$;
\item It is transversal to $\partial X_t$ (in the stratified sense) and points inwards;
\item The flow $\tilde{q}_t: [0, \infty) \ \times \ (X_t \backslash P_t) \to X_t$ associated to $E_t$ defines a map:
$$
\begin{array}{cccc}
\tilde{\xi}_t \ : & \! \partial X_t & \! \longrightarrow & \! P_t \\
& \! z & \! \longmapsto & \! \displaystyle \lim_{\tau \to \infty} \tilde{q}_t(\tau,z)
\end{array}
$$
such that $\tilde{\xi}_t$ is continuous, stratified, simplicial and surjective; 
\item The Milnor fiber $X_t$ is homeomorphic to the mapping cylinder of $\tilde{\xi}_t$.
\end{enumerate}
\end{itemize}
\end{prop}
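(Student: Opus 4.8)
The plan is to carry out an induction on $n = \dim_\bC X$, with the two-dimensional case already established in Section \ref{section_two_dimensional_case} serving as the base step, and to reduce the general inductive step to the companion statement (Proposition \ref{prop_b}, to be proved in tandem in Section \ref{section_prop_a}). First I would fix $t \in \BD_{\eta_2}^*$ and recall from Section \ref{section_polar_curves} that the linear form $\ell$ induces a stratified map $\ell_t : X_t \to D_t$ with $\Delta \cap D_t = \{y_1(t),\dots,y_k(t)\}$, and that the restriction $\varphi_t : X_t \setminus \ell_t^{-1}(\{y_1(t),\dots,y_k(t)\}) \to D_t \setminus \{y_1(t),\dots,y_k(t)\}$ is a stratified submersion, hence a locally trivial fibration by Thom--Mather (Corollary \ref{cor_discriminant} and the discussion after it). The key structural fact is that each special fiber $\ell_t^{-1}(y_j(t))$ is, up to a shift, itself a Milnor fiber of a function with an isolated stratified singularity on an analytic space of dimension $n-1$ (this is where Proposition \ref{prop_b} enters), so the inductive hypothesis furnishes a vanishing polyhedron $P_{j}$ of real dimension $(n-1)-1 = n-2$ inside $\ell_t^{-1}(y_j(t))$ together with a vector field $E_j$ retracting that fiber onto $P_j$ and exhibiting it as a mapping cylinder over $P_j$.

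Next I would build the polyhedron $P_t$. Choose a base point $\lambda_t \in D_t$ and, for each $j$, a line segment $\delta(y_j(t))$ from $\lambda_t$ to $y_j(t)$, pairwise meeting only at $\lambda_t$, exactly as in the two-dimensional construction; set $Q_t = \bigcup_j \delta(y_j(t))$. Over $Q_t \setminus \{\lambda_t, y_1(t),\dots,y_k(t)\}$ the map $\ell_t$ is a locally trivial fibration, so $\ell_t^{-1}(\delta(y_j(t)) \setminus \{y_j(t)\})$ is a product, and one can spread the polyhedron $P_j \subset \ell_t^{-1}(y_j(t))$ coherently over the segment; gluing in the fiber polyhedra $P_j$ at the endpoints $y_j(t)$ and taking the union over $j$ (identified along the common fiber over $\lambda_t$) produces a polyhedron $P_t \subset X_t$. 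Because each simplex sits inside a single stratum of the fibration and the $P_j$ are adapted to $\mathcal{S}$ by induction, $P_t$ is adapted to the induced stratification of $X_t$, and a dimension count gives $\dim_\bR P_t = 1 + (n-2) = n-1 = \dim_\bC X - 1$. This is the content I would record first, since it is the object referenced throughout.

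Then I would construct the vector field $E_t$. Apply Lemma \ref{lemma_v_t} to obtain a continuous vector field $v_t$ on $D_t$, smooth and nonzero off $Q_t$, zero on $Q_t$, inward-pointing on $\partial D_t$, whose flow $q_t$ retracts $D_t \setminus Q_t$ onto $Q_t$ and induces the simplicial surjection $\xi_t : \partial D_t \to Q_t$. Over the locus where $\varphi_t$ is a stratified submersion, Lemma \ref{lemma_Ve} lifts $v_t$ to a stratified integrable vector field; the delicate point is to arrange the lift near $P_t$ and near the special fibers $\ell_t^{-1}(y_j(t))$ so that it is tangent to those fibers, restricts there to the inductively given $E_j$, vanishes on all of $P_t$, and still has all its orbits converging. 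I would do this by working in the product charts of $\varphi_t$ over $\delta(y_j(t)) \setminus \{y_j(t)\}$ — combining a horizontal lift of $v_t$ with the vertical field $E_j$ via a partition of unity, in the spirit of Proposition \ref{prop_vector_field} and the vector-field construction deferred to Lemma \ref{prop_Et} in Section \ref{section_prop_Et}. Granting the limit $\tilde{q}_t(\tau,z) \to \tilde{\xi}_t(z)$ exists: convergence in $D_t$ via $q_t$ pins down which special fiber (or which point of $Q_t$) the orbit approaches, then convergence inside that fiber is governed by $E_j$ and the inductive hypothesis, so $\tilde{\xi}_t$ is continuous, stratified, simplicial, and surjective onto $P_t$. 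Finally, integrating $E_t$ on $X_t \setminus P_t$ yields a continuous surjection $\a : [0,\infty] \times \partial X_t \to X_t$ restricting to a homeomorphism on $[0,\infty) \times \partial X_t$ and to $\tilde{\xi}_t$ on $\{\infty\} \times \partial X_t$; the quotient map $[\a]$ identifies $X_t$ with the mapping cylinder of $\tilde{\xi}_t$, exactly as in the proof of Proposition \ref{prop_vector_field}, giving (6).

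I expect the main obstacle to be the construction of $E_t$ near the union of the special fibers $\ell_t^{-1}(y_j(t))$ — that is, making the horizontal lift of $v_t$ and the fiberwise fields $E_j$ agree on overlaps so that the glued field is genuinely continuous, vanishes precisely on $P_t$, stays integrable, and has every orbit converging. Making that gluing precise is exactly why Lemma \ref{prop_Et} is isolated and proved separately in Section \ref{section_prop_Et}, and why Proposition \ref{prop_b} is needed to guarantee that the special fibers carry the inductive structure in the first place; I would invoke both at this point rather than reprove them here.
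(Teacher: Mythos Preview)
Your overall inductive strategy and your treatment of the vector field via Lemma~\ref{prop_Et} are in the right spirit, but there is a genuine structural gap in the construction of $P_t$ itself.

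The paper applies the induction hypothesis in \emph{two distinct places}, and you are missing one of them. First, induction is applied to the restriction $f' := f|_{X \cap \{\ell=0\}}$, a function on an $(n-1)$-dimensional space; this produces a vanishing polyhedron $P_t'$ inside the \emph{generic} fibre $\ell_t^{-1}(\lambda_t) = X_t \cap \{\ell=0\}$ together with its retracting vector field $E_t'$. This central polyhedron $P_t'$ is the backbone onto which everything else is glued, and it does not appear in your proposal. Second, near each $x_j(t)$ the induction hypothesis (in the form of Proposition~\ref{prop_b2}) is applied to the germ $\ell_t : (X_t, x_j(t)) \to (D_t, y_j(t))$ to produce a \emph{collapsing cone} $P_j$ living in $\ell_t^{-1}(D_s^+) \cap B_j$, i.e.\ over a small semi-disk near $y_j(t)$, not inside the special fibre $\ell_t^{-1}(y_j(t))$ as you wrote. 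That special fibre is singular at $x_j(t)$ and is not a Milnor fibre of anything; the inductive polyhedron does not sit inside it.

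Without the central $P_t'$, your gluing at $\lambda_t$ breaks down: transporting each $P_j$ along $\delta(y_j(t))$ by a trivialization lands it on some subpolyhedron of $\ell_t^{-1}(\lambda_t)$, but there is no reason these $k$ subpolyhedra coincide or fit together, so ``identified along the common fibre over $\lambda_t$'' is not well-defined. In the paper the wing $C_j = P_j \cup R_j \cup P_j'$ is built by first constructing $P_j$ locally, then flowing its base $P_j(a_j)$ along the vector field $\calV_1 = \calV + \tilde\theta\,\Xi$ (where $\calV$ transports $E_t'$ and $\Xi$ lifts a radial field on $\delta(y_j(t))$) until it lands on a \emph{sub}-polyhedron $P_j'$ of the already-existing $P_t'$; the final polyhedron is $P_t = P_t' \cup \bigcup_j C_j$. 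The presence of $P_t'$ is what makes the union a polyhedron and what allows the vector field $E_t$ to have well-defined limit points. You should insert the construction of $P_t'$ as the first step and replace ``$P_j \subset \ell_t^{-1}(y_j(t))$'' by the collapsing cone for $\ell_t$ near $x_j(t)$.
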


\medskip
We say that the polyhedron $P_t$ above is a {\it vanishing polyhedron for $f$}.

\medskip
The idea of the construction of $P_t$ is quite simple and we will briefly describe it here. First recall the stratified map $\ell_t: X_t \to D_t$ given by the restriction of $\phi$ to $X_t$.

\medskip
By induction hypothesis, we have a vanishing polyhedron $P_t'$ for the restriction of $f$ to the hyperplane section $X \cap \{ \ell=0 \}$. 

\medskip
For each point $y_j(t)$ in the intersection of the polar discriminant $\Delta$ with the disk $D_t := \BD_{\eta_1} \times \{t\}$ as above, let $x_j(t)$ be a point in the intersection of the polar curve $\Gamma$ with $\ell_t^{-1}\big(y_j(t)\big)$. To simplify, we can assume that $x_j(t)$ is the only point in such intersection. 

\medskip
Also by the induction hypothesis, we have a collapsing cone $P_j$ for the restriction of the map $\ell_t$ to a small neighborhood of $x_j(t)$. The ``basis" of such cone is the polyhedron $P_j(a_j) := P_j \cap \ell_t^{-1}(a_j)$, where $a_j$ is a point in $\delta(y_j(t)) \backslash y_j(t)$ close to $y_j(t)$. 

\medskip
Since $\ell_t$ is a locally trivial fiber bundle over $\delta(y_j(t)) \backslash y_j(t)$, we can ``extend" the cone $P_j$ until it reaches the ``central" polyhedron $P_t'$. This gives a ``wing" that we denote by $C_j$. The union of all the wings $C_j$ together with $P_t'$ gives our vanishing polyhedron $P_t$.

\begin{figure}[!h] 
\centering 
\includegraphics[scale=0.3]{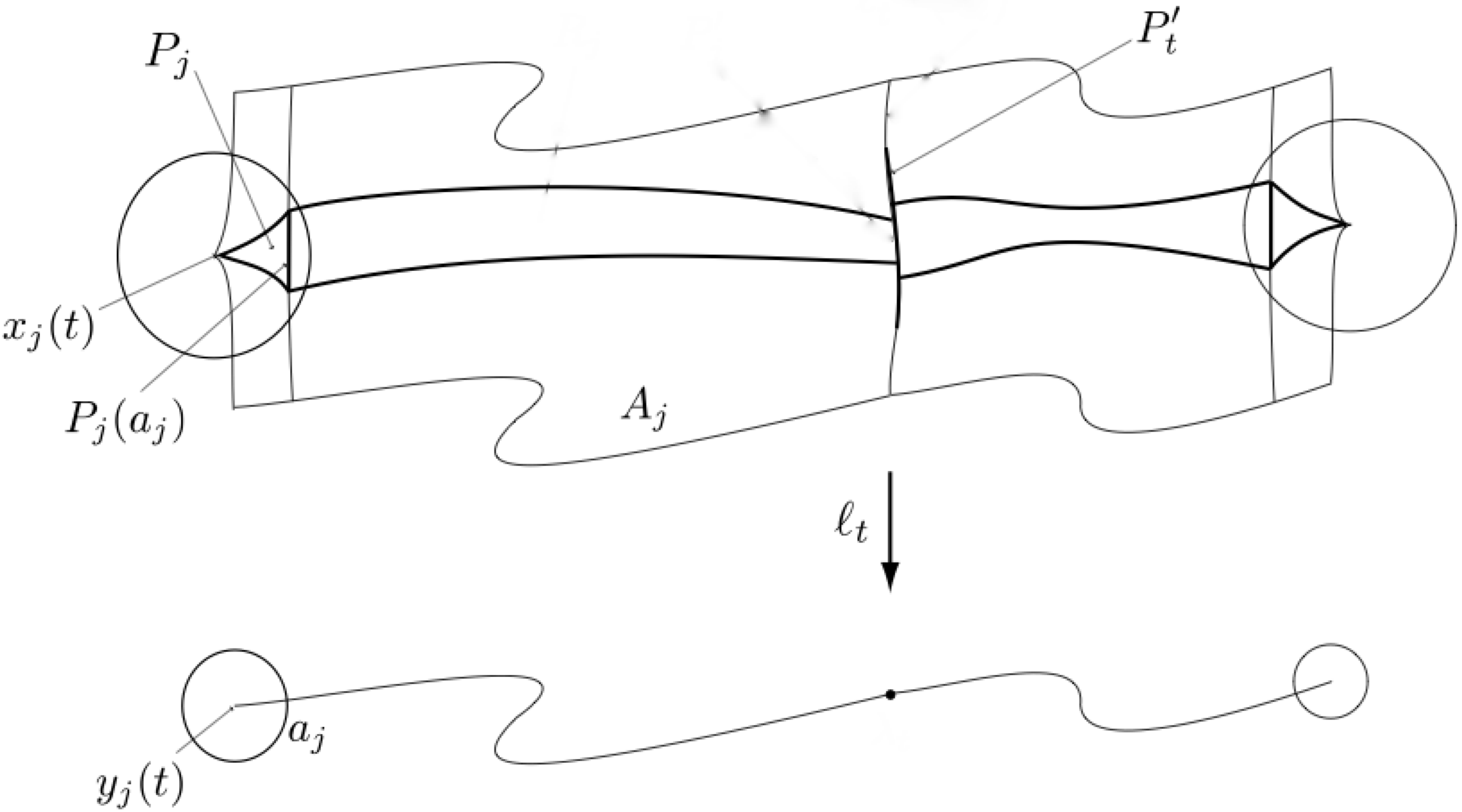}
\caption{}
\label{Fig5}
\end{figure}

\medskip
The detailed construction of $P_t$ is given in the next section, where we also show that the construction of the polyhedron $P_t$ and the vector field $E_t$ can be done simultaneously, for any $t$ along any simple path $\gamma$ in $\BD_{\eta_2}$ joining $0$ and some $t_0 \in \BD_{\eta_2}$.

\medskip
Precisely, we will prove the following:

\begin{prop} \label{prop_b}
Let $\gamma$ be a simple path in $\BD_{\eta_2}$ joining $0$ and some $t_0 \in \BD_{\eta_2}$. There exist a polyhedron $P_\gamma$ in $X_\gamma := X \cap f^{-1}(\gamma) \cap \B_\e$, adapted to the stratification induced by $\mathcal{S}$, and a continuous vector field $E$ in $X_\gamma$, tangent to each stratum of the interior of $X_\gamma$, such that:
\begin{itemize}
\vspace{0.17cm}
\item[$(i)$] the intersection $P_\gamma \cap X_t$ is a vanishing polyhedron $P_t$ as in the proposition above, for any $t \in \gamma \backslash \{0\}$, and $P_\gamma \cap X_0 = \{0\}$;
\item[$(ii)$] for any $t \in \gamma \backslash \{0\}$, the restriction of $E$ to $X_t$ gives a vector field $E_t$ as in the proposition above, relatively to the polyhedron $P_t = P_\gamma \cap X_t$;
\item[$(iii)$] the vector field $E$ is integrable on $X_\gamma \setminus P_\gamma$, smooth on each stratum of the induced stratification of $X_\gamma \setminus P_\gamma$, non-zero outside $P_\gamma$, zero on $P_\gamma$, transversal to $\partial X_\gamma := X_\gamma \cap \BS_\e$ in the stratified sense, and points inwards.
\end{itemize}
\end{prop}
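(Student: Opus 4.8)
The plan is to carry out the construction sketched after Proposition \ref{prop_a}, but uniformly in the parameter $t\in\gamma$, exploiting that everything downstairs lives over the fixed $2$-disk $\BD_{\eta_1}\times\gamma$ and that $\phi=(\ell,f)$ is a stratified submersion away from $\Delta$ (Proposition \ref{prop_discriminant}). First I would fix the combinatorial data downstairs: as in the two-dimensional section, the projection $\pi\colon\BD_{\eta_1}\times\BD_{\eta_2}\to\BD_{\eta_2}$ restricts to a ramified covering $\Delta\to\BD_{\eta_2}$ of degree $k$ branched only over $0$, so over $\gamma\setminus\{0\}$ it splits into $k$ disjoint arcs $y_j(t)$; I choose the arc $\Lambda$ of basepoints $\lambda_t$ with $\Lambda\cap\Delta=\{0\}$, and then the simplices $\delta(y_j(t))$ assembling into triangles $T_j$ with $T_j\cap T_{j'}=\Lambda$ and $T_j\setminus\{0\}$ smoothly immersed, exactly as in Figure \ref{Fig3}. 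Set $Q=\bigcup_j T_j$. Upstairs, over each arc $y_j(t)$ sits a point $x_j(t)\in\Gamma$ (assume one, for simplicity); a neighborhood of the arc $\bigcup_t x_j(t)$ in $X_\gamma$ maps to a neighborhood of $\bigcup_t y_j(t)$ in $\BD_{\eta_1}\times\gamma$, and I restrict $\ell$ there.

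The heart of the construction is the induction on $n=\dim_\bC X$. I would first apply Proposition \ref{prop_b} (the inductive hypothesis in dimension $n-1$) to the restriction of $f$ to the hyperplane section $X\cap\{\ell=0\}$ along the path $\gamma$; this yields a ``central'' polyhedron $P'_\gamma$ with $P'_\gamma\cap X'_t=P'_t$ and $P'_\gamma\cap X'_0=\{0\}$, together with a compatible vector field $E'$. Next, along each triangle $T_j$, near the singular arc $\{x_j(t)\}$, the map $\ell_t$ is (by the geometry of the polar curve) a model to which the inductive hypothesis applies: it gives a collapsing cone $P_j$ over the base segment of $T_j$, whose fiber $P_j(a_j)=P_j\cap\ell^{-1}(a_j)$ over a point $a_j\in\delta(y_j(t))$ close to $y_j(t)$ is a vanishing polyhedron for the corresponding local fibration. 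Since $\ell$ is a locally trivial stratified fiber bundle over $\delta(y_j(t))\setminus y_j(t)$ (Corollary \ref{cor_discriminant}), I transport $P_j(a_j)$ along the $\ell$-direction, from $y_j(t)$ out to $\lambda_t$ where it is glued to $P'_t$; doing this for all $t\in\gamma$ simultaneously produces the ``wing'' $C_j\subset\phi^{-1}(T_j)$. Then $P_\gamma:=P'_\gamma\cup\bigcup_j C_j$, and $P_t:=P_\gamma\cap X_t$; one checks $P_\gamma\cap X_0=\{0\}$ because all triangles $T_j$ and the arc $\Lambda$ meet in $0$ and the cones degenerate to their apex there, and adaptedness to $\mathcal S$ holds because each simplex was built inside a single stratum using the stratified-submersion property of $\varphi_t$.

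Having $P_\gamma$, I construct $E$ by lifting a vector field $\mathcal V$ on $\BD_{\eta_1}\times\gamma$ that deformation-retracts $\BD_{\eta_1}\times\gamma$ onto $Q$ with zero $\gamma$-component (as in Lemma \ref{lemma_v_t} and the displayed bullet conditions in the two-dimensional section). Applying Lemma \ref{lemma_Ve} to the proper stratified submersion $\varphi_t$ (resp. its $\gamma$-family) lifts $\mathcal V$ to a continuous, stratified, integrable vector field $E$ on $X_\gamma$ that is tangent to interior strata, zero on $P_\gamma$, transversal to $\partial X_\gamma$ pointing inwards, and whose restriction to each $X_t$ is a valid $E_t$; the existence of the limit map $\tilde\xi_t$ and the mapping-cylinder conclusion are then obtained exactly as in the proof of Proposition \ref{prop_vector_field}, by pushing the downstairs limit $\xi_t\colon\partial D_t\to Q_t$ through the finite, locally trivial map $\ell_t$. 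The one subtlety requiring care is the matching of the wing vector fields with $E'$ along the gluing locus $\ell^{-1}(\lambda_t)$ and along the apex $0$: near $0$ the cones $C_j$, the path $\Lambda$, and $P'_\gamma$ all collide, and one must check that the partition-of-unity gluing of the local fields $\mathcal E_p$ (the analogue of cases $(a)$–$(b)$ in the two-dimensional argument) never produces opposite directions, so that integrability and transversality survive. I expect this compatibility near the apex — making the inductively supplied collapsing cones and the central polyhedron fit together into a single polyhedron with a single coherent vector field, uniformly for $t$ down to $0$ — to be the main obstacle, and it is precisely what is deferred to the detailed treatment in section \ref{section_prop_a} together with the vector-field Lemma \ref{prop_Et}.
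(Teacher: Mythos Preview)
Your inductive scaffolding for $P_\gamma$ is reasonable, but there is a genuine gap in your construction of the vector field $E$. You assert that lifting, via Lemma~\ref{lemma_Ve}, the vector field $\mathcal V$ on $\BD_{\eta_1}\times\gamma$ (which retracts onto $Q$) produces a field on $X_\gamma$ that is ``zero on $P_\gamma$'' and ``whose restriction to each $X_t$ is a valid $E_t$''. This is false once $\dim_\bC X\ge 3$: any lift of $\mathcal V$ vanishes on the entire preimage $\phi^{-1}(Q)$, which has real dimension $2(n-2)+2=2n-2$, strictly larger than the $n$-dimensional polyhedron $P_\gamma$; so condition~(iii) (non-zero outside $P_\gamma$) fails. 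Equivalently, the restriction to $X_t$ is the lift of $v_t$ by $\ell_t$, which has no component in the fiber direction of $\ell_t$ and hence cannot retract $X_t$ onto $P_t$; its zero set is $\ell_t^{-1}(Q_t)$, not $P_t$. The step you borrow from Proposition~\ref{prop_vector_field} relies on the fact that in the two-dimensional case $\ell_t$ is \emph{finite}, so that $P_t=\ell_t^{-1}(Q_t)$; this identification breaks down here. The substance of Lemma~\ref{prop_Et} is precisely to manufacture, in each fiber $\ell_t^{-1}(u)$, a vertical retraction onto the trace of $P_t$ (using the transported $E_t'$ and the local $E_j$'s) and to splice it with a horizontal lift; that is the main point, not merely a ``compatibility near the apex''.

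It is also worth noting that the paper builds $P_\gamma$ by a different route than you propose. Rather than assembling all wings uniformly in $t$, the paper first constructs $P_{t_0}$ in the single fiber $X_{t_0}$, then produces a trivializing vector field $G$ on $X_\gamma$ whose projection to $\gamma$ is radial toward $0$: the inductively given field $G'$ on $\phi^{-1}(\Lambda)$ is extended over a conic neighborhood $\tilde{\mathcal U}$, local trivializing fields $G_j$ are built on conic neighborhoods $\mathcal B_j$ of the polar arcs $\{x_j(t)\}$, and these are glued by a partition of unity. The polyhedron $P_\gamma$ is then \emph{defined} as the image of $P_{t_0}$ under the flow of $G$. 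This transport approach makes condition~(i) (in particular $P_\gamma\cap X_0=\{0\}$) automatic, whereas in your uniform-in-$t$ approach you would still owe an argument that the separately built wings degenerate coherently as $t\to 0$.
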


\medskip
We say that the polyhedron $P_\gamma$ above is a {\it collapsing cone for $f$ along the path $\gamma$}.

\medskip
One can check that the flow $\tilde{q}: [0,\infty) \times (X_\gamma \backslash P_\gamma) \to X_\gamma$ given by the integration of the vector field $E$ on $X_\gamma \setminus P_\gamma$ defines a continuous, simplicial and surjective map: 
$$ 
\begin{array}{cccc}
\tilde{\xi} \ : & \! \partial X_\gamma & \! \longrightarrow & \! P_\gamma \\
& \! z & \! \longmapsto & \! \displaystyle \lim_{\tau \to \infty} \tilde{q}(\tau,z)
\end{array} 
$$
such that $X_\gamma$ is homeomorphic to the mapping cylinder of $\tilde{\xi}$ (see Proposition \ref{prop_vector_field}). 

\medskip
Assuming that Propositions \ref{prop_a} and \ref{prop_b} are true, we can easily prove Theorem \ref{theo_main} of the Introduction as follows:

\medskip
Fix $t \in \BD_{\eta_2}^*$ and let $\gamma$ be a simple path in $\BD_{\eta_2}$ connecting $t$ and $0$. Consider the polyhedron $P_\gamma$ and the vector field $E$ in $X_\gamma$ given by Propositions \ref{prop_a} and \ref{prop_b}, as well as the flow $\tilde{q}$ given by the integration of $E$. 

\medskip
For any positive real $A>0$ set:
$$\tilde{V}_A(P_\gamma):= X_\gamma \setminus \tilde{q} \big( [0,A) \times \partial X_\gamma \big) \, ,$$
which is a closed neighborhood of $P_\gamma$ in $X_\gamma$. Notice that using the first isotopy theorem of Thom-Mather, the boundary $\partial \tilde{V}_A(P_\gamma)$ of $\tilde{V}_A(P_\gamma)$ is a locally trivial topological fibration over $\gamma$.

\medskip
Following the steps $(a)$ and $(b)$ of the end of section \ref{subsection_ss} and using the first isotopy theorem of Thom-Mather, we can construct a similar continuous vector field $\calE$ on $X_\gamma \backslash P_\gamma$ such that:
\begin{itemize}
\item[$\circ$] it lifts a smooth vector field $\theta$ on $\gamma$ that goes from $t_0$ to $0$ in a time $a>0$;
\item[$\circ$] it is smooth on each stratum of $X_\gamma \backslash P_\gamma$;
\item[$\circ$] it is rugose and hence integrable;
\item[$\circ$] it is tangent to $ \partial \tilde{V}_A(P_\gamma) $, for any $A > 0$.
\end{itemize}

\medskip
In fact, the vector fields constructed here are rugose because under the hypothesis of the first isotopy theorem of Thom-Mather, a smooth vector field lifts to a rugose vector field. See Lemma \ref{lemma_Ve}.

\medskip
So the flow $g: [0,a] \times X_\gamma \backslash P_\gamma \to X_\gamma \backslash P_\gamma$ associated to $\calE$ defines a homeomorphism $\Psi_t$ from $X_t \backslash P_t$ to $X_0 \backslash \{0\}$ that extends to a continuous map from $X_t$ to $X_0$ and that sends $P_t$ to $\{0\}$, for any $t \in \gamma \backslash \{0\}$. This proves Theorem \ref{theo_main}.

%%%%%%%%%%%%%%%%%%%%%%%%%%%%%%%%%%%%%
\medskip
\section{Proof of Propositions \ref{prop_a} and \ref{prop_b}}
\label{section_prop_a}

In this section, we shall prove Propositions \ref{prop_a} and \ref{prop_b}. 

\medskip
Actually, the same arguments used in the proof of Proposition \ref{prop_b} give a proof for the following stronger proposition, where instead of constructing the polyhedron $P_t$ and the vector field $E_t$ simultaneously for $t$ in a path $\gamma$, we construct them simultaneously for $t$ in a closed semi-disk $\BD^+$ of $\BD_{\eta_2}$.

\begin{prop} \label{prop_b2}
Let $\BD^+$ be a closed semi-disk in $\BD_{\eta_2}$. There exist a polyhedron $P^+$, adapted to the stratification induced by $\mathcal{S}$, and a continuous vector field $E$ in $X^+ := X \cap f^{-1}(\BD^+) \cap \B_\e$, tangent to each stratum of the interior of $X^+$, such that:
\begin{itemize}
\vspace{0.17cm}
\item[$(i)$] the intersection $P^+ \cap X_t$ is a vanishing polyhedron $P_t$ as in Proposition \ref{prop_a}, for any $t \in \BD^+ \backslash \{0\}$, and $P^+ \cap X_0 = \{0\}$;
\item[$(ii)$] for any $t \in \BD^+ \backslash \{0\}$, the restriction of $E$ to $X_t$ gives a vector field $E_t$ as in Proposition \ref{prop_a}, relatively to the polyhedron $P_t = P^+ \cap X_t$;
\item[$(iii)$] the vector field $E$ is integrable on $X^+ \backslash P^+$, smooth on each stratum of the induced stratification of $X^+ \backslash P^+$, non-zero outside $P^+$, zero on $P^+$, transversal to $\partial X^+ := X^+ \cap \BS_\e$ in the stratified sense, and points inwards.
\end{itemize}
\end{prop}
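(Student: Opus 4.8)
The strategy is to run essentially the same inductive machinery as in the two-dimensional case of Section~\ref{section_two_dimensional_case}, but now organized so that all constructions are carried out over the closed semi-disk $\BD^+$ at once rather than over a single path $\gamma$. The key observation that makes this possible is that $\BD^+$ is still contractible, so $\phi^{-1}\big((\BD_{\eta_1} \times \BD^+) \setminus \Delta\big) \to (\BD_{\eta_1} \times \BD^+) \setminus \Delta$ is a proper stratified submersion (Proposition~\ref{prop_discriminant}) whose total space trivializes over the base; hence every vector field we build downstairs on $\BD_{\eta_1} \times \BD^+$ can be lifted by Lemma~\ref{lemma_Ve}. First I would set up the base geometry: the projection $\pi: \BD_{\eta_1} \times \BD_{\eta_2} \to \BD_{\eta_2}$ restricts to a ramified covering $\Delta \to \BD_{\eta_2}$ of degree $k$ branched only over $0$, so over $\BD^+ \setminus \{0\}$ it splits into $k$ disjoint sheets; I would pick a section $\Lambda$ of $\pi$ over $\BD^+$ avoiding $\Delta$ away from $0$, giving the basepoints $\lambda_t$, and then for each sheet $y_j(t)$ a family of simple paths $\delta(y_j(t))$ from $\lambda_t$ to $y_j(t)$ sweeping out a ``triangle'' $T_j$ over $\BD^+$. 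Setting $Q := \bigcup_j T_j$ and $P^+ := \phi^{-1}(Q)$ produces the candidate polyhedron, with $P^+ \cap X_t = P_t$ the vanishing polyhedron of Proposition~\ref{prop_a} and $P^+ \cap X_0 = \{0\}$.

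The core of the argument is the induction on $\dim_\bC X$. By the inductive hypothesis applied to the restriction of $f$ to the hyperplane section $X \cap \{\ell = 0\}$ (which is again reduced equidimensional of one lower dimension, with an isolated stratified singularity, since $\ell$ is generic in $\Omega$), we obtain a ``central'' collapsing cone $P'^+$ for that restriction, fibered over $\BD^+$. Near each point $x_j(t)$ of $\Gamma \cap \ell_t^{-1}(y_j(t))$ the inductive hypothesis also yields a local collapsing cone $P_j$ for the restriction of $\ell_t$; since $\ell_t$ is a locally trivial stratified fibration over $\delta(y_j(t)) \setminus \{y_j(t)\}$ (by Proposition~\ref{prop_discriminant} and Thom–Mather), I would parallel-transport the ``basis'' $P_j(a_j)$ along this segment back toward $P'^+$, producing the ``wing'' $C_j$, and take $P_t := P'^+_t \cup \bigcup_j C_j$. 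That this patches into a genuine polyhedron adapted to $\mathcal{S}$ follows because $\ell_t$ is finite and the transport happens inside single strata. Then I would build the vector field: first a continuous vector field $\mathcal{V}$ on $\BD_{\eta_1} \times \BD^+$ that is smooth off $Q$, zero on $Q$, transversal to the boundary, with zero $\BD^+$-component, deformation-retracting onto $Q$ — this is the direct analogue of Lemma~\ref{lemma_v_t}, and the Riemann-mapping/Carathéodory argument there applies fiberwise. Lifting $\mathcal{V}$ via Lemma~\ref{lemma_Ve} over the submersion $\varphi_t$ (done in families over $\BD^+$) gives $E$ satisfying (ii) and (iii); the limit argument of Proposition~\ref{prop_vector_field}, using that $E$ lifts $\mathcal{V}$ and that $\ell_t$ is finite, shows the flow defines the simplicial surjection $\tilde\xi_t$ onto $P_t$ and exhibits $X_t$ as the mapping cylinder.

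The main obstacle, and where the real work lies, is making the wing construction compatible simultaneously along \emph{all} of $\BD^+$ and gluing the local cones $P_j$ to the central cone $P'^+$ coherently — i.e.\ ensuring that as $t$ varies the transported bases $P_j(a_j)$ fit together with $P'^+_t$ into a polyhedron varying continuously (and collapsing to $\{0\}$ at $t = 0$), and that the corresponding vector fields glue without introducing opposite directions when combined by a partition of unity. This is exactly the point where one needs the detailed vector field of Lemma~\ref{prop_Et} from Section~\ref{section_prop_Et}, and where the semi-disk (rather than full disk) is used: cutting along a diameter removes the monodromy of the covering $\Delta \to \BD_{\eta_2}$ around $0$, so the $k$ sheets stay disjoint and individually trivial over $\BD^+$, which is what allows the simultaneous construction to go through. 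Once Proposition~\ref{prop_b2} is established over $\BD^+$, Propositions~\ref{prop_a} and~\ref{prop_b} follow immediately by restricting to a single fiber $X_t$ and to a path $\gamma \subset \BD^+$, respectively.
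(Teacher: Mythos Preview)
Your proposal conflates two different constructions. In the first paragraph you set $P^+ := \phi^{-1}(Q)$; this is the correct polyhedron only in the two-dimensional case, where $\phi$ is finite and the preimage of a $1$-complex is again a $1$-complex. For $n>2$ the fibers of $\phi$ have complex dimension $n-2$, so $\phi^{-1}(Q)$ has real dimension $2n-1$ over $\BD^+$, far larger than the $(n+1)$-dimensional cone $P^+$ one actually needs. Your second paragraph does describe the right object --- the central polyhedron $P'^{+}$ from the hyperplane section together with wings $C_j$ obtained by transporting the local cones $P_j$ --- but this is a proper subset of $\phi^{-1}(Q)$, and you never reconcile the two descriptions. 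The same confusion recurs for the vector field: a lift of $\mathcal{V}$ through $\varphi_t$ only retracts $X_t$ onto $\ell_t^{-1}(Q_t)=\phi^{-1}(Q)\cap X_t$, not onto the smaller $P_t$; the further retraction inside each fiber of $\ell_t$ is precisely the content of Lemma~\ref{prop_Et}, and it cannot be produced by lifting anything from the base.

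The paper's organization is also different from yours. Rather than running the full wing construction parametrically over $\BD^+$ (or $\gamma$), the paper first builds $P_{t_0}$ in a single fiber via Proposition~\ref{prop_a}, then constructs a separate trivializing vector field $G$ on $X_\gamma$ --- obtained by gluing an extension $G_{\calU}$ of the inductive vector field $G'$ over a conic neighborhood $\tilde{\calU}$ of $\phi^{-1}(\Lambda)$ with trivializing fields $G_j$ on conic neighborhoods $\calB_j$ of the polar curve --- and uses the flow of $G$ to sweep $P_{t_0}$ down to $0$, producing $P_\gamma$. The vector field $E$ is then obtained by repeating the Section~\ref{section_prop_Et} construction along the family. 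This ``construct once, then transport'' scheme decouples the fiberwise problem from the family problem; your fully parametric scheme should also work in principle, but it requires verifying that every piece of the Section~\ref{section_prop_Et} construction (the balls $B_r$, the semi-disks $D_s^+$, the fields $\tilde{E}_{t,j,i}$) varies continuously in $t$ and degenerates correctly at $t=0$, which is exactly the difficulty you flag but do not resolve. Your remark that the semi-disk is needed to kill the monodromy of the branched cover $\Delta\to\BD_{\eta_2}$, so that the sheets $y_j(t)$ stay globally separated, is correct and is indeed the reason the statement is formulated over $\BD^+$.
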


\medskip
We say that the polyhedron $P^+$ is a {\it collapsing cone for $f$ along the semi-disk $\BD^+$}.

\medskip
We will prove Propositions \ref{prop_a} and \ref{prop_b} by finite induction on the dimension $n$ of $X$. Proposition \ref{prop_b2} will be used (as induction hypothesis) in the construction of the vector field $E_t$ of Proposition \ref{prop_a}.

\medskip
We have already proved Propositions \ref{prop_a} and \ref{prop_b} when $n=2$, and one can check that it is easy to extend it to a proof of Proposition \ref{prop_b2} in the two-dimensional case. Now we will prove that if these results are true whenever $\dim_\bC X \leq n-1$, then they are true when $\dim_\bC X =n$.

%%%%%%%%%%%%%%%%%
\medskip
\subsection{Proof of Proposition \ref{prop_a}: constructing the vanishing polyhedron}
\label{subsection_CLP2}
\ \\

As we said above, the polyhedron $P_t$ will consist of a ``central'' polyhedron $P_t'$ on which we will attach ``wings'' $C_j$. The first step will be to construct the central polyhedron $P_t'$. Then we will construct the extremity of each wing $C_j$. Finally, we will extend the extremity of each wing until it hits $P_t'$.

\medskip
Recall that we have fixed a linear form $\ell: \bC^N \to \bC$ that takes $0 \in \bC^N$ to $0 \in \bC$ and that satisfies the conditions of Lemma \ref{lemma_polar_curve}. Then $\Gamma$ is the polar curve of $f$ relatively to $\ell$ at $0$ and $\Delta$ is the polar discriminant of $f$ relatively to $\ell$ at $0$. 

\medskip
Also recall from Proposition \ref{prop_discriminant} that the map $\phi=(\ell,f)$ induces a stratified submersion: 
$$\phi_|: \B_\e \cap X \cap \phi^{-1}(\BD_{\eta_1} \times \BD_{\eta_2} \setminus \Delta) \to \BD_{\eta_1} \times \BD_{\eta_2} \setminus \Delta$$
and that for each $t \in \BD_{\eta_2}^*$ fixed, the restriction $\ell_t$ of $\ell$ to the Milnor fiber $X_t$ induces a topological locally trivial fibration:
$$\varphi_t: X_t \backslash \ell_t^{-1}\big(\{y_1(t), \dots, y_k(t)\}\big) \to D_t \backslash \{y_1(t), \dots, y_k(t)\} \, ,$$
where $D_t = \BD_{\eta_1} \times \{t\}$ and $\{y_1(t), \dots, y_k(t)\} = \Delta \cap D_t$.

\medskip
For any $t \in \BD_{\eta_2}$ set $\lambda_t := (0,t)$. Since the complex line $\{0\} \times \bC$ is not a component of $\Delta$, we can suppose that $\lambda_t \notin \{y_1(t), \dots, y_k(t)\}$. 

\medskip
For each $j = 1, \dots, k$, let $\delta(y_j(t))$ be a simple path in $D_t$ starting at $\lambda_t$ and ending at $y_j(t)$, such that two of them intersect only at $\lambda_t$. 

\medskip
\underline{First step: constructing the central polyhedron $P_t'$:}
\medskip

Consider $f'$ the restriction of $f$ to the intersection $X \cap \{\ell=0\}$, which has complex dimension $n-1$. Then we can apply the induction hypothesis to $f'$ to obtain a vanishing polyhedron $P_t'$ in the fiber $X_t \cap \{ \ell=0 \}$ and a integrable vector field $E_t'$ in $X_t \cap \{ \ell=0 \}$, stratified on $(X_t \cap \{ \ell=0 \}) \setminus P_t'$, that deformation retracts $X_t \cap \{ \ell=0 \}$ onto $P_t'$.

\medskip
\underline{Second step: constructing the extremity of the wings $C_j$:}
\medskip

First of all, in order to make it easier for the reader to understand the constructions, we will suppose that $\Gamma$ intersects $\ell_t^{-1}\big(y_j(t)\big)$ in only one point, which we call $x_j(t)$. The proof of the general case follows the same steps. In fact, we do the following conjecture:

\begin{conj} 
For $\ell$ general enough, the map-germ $\phi_\ell = (\ell,f): (X,x) \to (\bC^2,0)$ induces a bijective morphism between $\Gamma$ and $\Delta$. 
\end{conj}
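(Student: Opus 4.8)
Since the final statement is a conjecture, what follows is a plan rather than a finished proof.

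\emph{Step 1: reduction to degree one.} The plan is first to reduce the bijectivity of $\phi_\ell|_\Gamma\colon\Gamma\to\Delta$ to the weaker assertion that, for $\ell$ general, $\phi_\ell|_\Gamma$ has degree one onto $\Delta$, i.e. distinct branches of $\Gamma$ have distinct images. By Lemma~\ref{lemma_polar_curve} the morphism $\phi_\ell|_\Gamma$ is finite and surjective, and in the notation of that lemma (see its proof) one has $\phi_\ell^{-1}(0)\cap\Gamma=\{x\}$. Granting the degree-one statement, $\phi_\ell$ then induces a bijection between the branches $\Gamma^{(i)}$ of $\Gamma$ at $x$ and the branches $\Delta^{(i)}$ of $\Delta$ at $0$; since each $\Delta^{(i)}$ is an irreducible curve germ, $\Delta^{(i)}\setminus\{0\}$ is a smooth connected curve, and $\phi_\ell\colon\Gamma^{(i)}\setminus\{x\}\to\Delta^{(i)}\setminus\{0\}$ is a finite, birational, surjective morphism onto this normal connected curve, hence an isomorphism by Zariski's main theorem. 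Combined with $\phi_\ell^{-1}(0)\cap\Gamma=\{x\}$ and the fact that distinct branches of a curve germ meet only at its centre, this shows that $\phi_\ell|_\Gamma$ is injective, hence bijective. So everything reduces to the degree-one statement.

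\emph{Step 2: reformulation stratum by stratum.} It is enough to treat one stratum $S_\a$ at a time (there are finitely many relevant ones, as in the proof of Lemma~\ref{lemma_polar_curve}). Away from $f^{-1}(0)$, a point of $\Gamma_\a$ in the fibre $f^{-1}(t)$ is exactly a critical point of $\ell$ restricted to the smooth manifold $S_\a\cap f^{-1}(t)\cap\B_\e(x)$, and its image in $\Delta_\a$ records the corresponding critical value of $\ell$ together with the value $t$. Thus ``$\phi_\ell|_\Gamma$ has degree one onto $\Delta$'' is equivalent to: for $\ell$ general and for $t$ general, the critical values of the restriction of $\ell$ to $S_\a\cap f^{-1}(t)\cap\B_\e(x)$ are pairwise distinct, for every $\a$ (the stratum containing $x$ needs a minor separate discussion, since there $f$ is not a submersion, but the Milnor fibre of $f$ restricted to that stratum is still smooth). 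For a fixed smooth variety a general linear function is Morse with pairwise distinct critical values, so for each fixed $t$ the required condition holds for $\ell$ in a dense Zariski-open set $\Omega_{\a,t}$; the real content of the conjecture is that a single general $\ell$ can be chosen so that this holds simultaneously for all small $t$.

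\emph{Step 3: the parametrized argument, and the obstacle.} To make ``simultaneously'' precise, the plan is an incidence variety: inside $\Omega\times X\times X$ consider the set $\mathcal Z$ of triples $(\ell,z,z')$ with $z\neq z'$, both $z$ and $z'$ lying on the polar curve $\Gamma_\ell$ of the appropriate strata, and $\phi_\ell(z)=\phi_\ell(z')$; then the degree-one statement for general $\ell$ is equivalent to the first projection $\mathcal Z\to\Omega$ being non-dominant. Here is where the real difficulty lies. Over a fixed $\ell$ the fibre of $\mathcal Z\to\Omega$ is generically finite --- a pair of points on a curve cut out by the two equations $\ell(z)=\ell(z')$ and $f(z)=f(z')$ --- so $\dim\mathcal Z\le\dim\Omega$; this is exactly the borderline case, and the crude dimension count does not decide non-dominance. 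Worse, one cannot argue by fixing the curves and varying $\ell$, since the polar curve itself moves with $\ell$. The way I would try to push through is to reduce to the surface case by iterated general hyperplane sections, invoking the results of L\^e and Teissier on the compatibility of relative polar curves, their discriminants and Whitney stratifications with general sections; in dimension two one then studies the branches of $\Gamma$ through their Puiseux expansions and shows that for general $\ell$ their polar quotients (orders of contact with the fibres of $f$) are pairwise distinct, which forces the branches of $\Delta$ to be distinct. For the top stratum when $X$ is smooth this is Teissier's classical computation; the extra polar branches coming from lower strata should be handled the same way, using that ``general $\ell$'' only imposes finitely many Zariski-open conditions, one per stratum, as in the proof of Lemma~\ref{lemma_polar_curve}. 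The step I expect to be hardest is precisely controlling how the polar curve and its discriminant deform as $\ell$ varies, and checking that all these genericity conditions --- in particular the distinctness of the polar quotients --- survive the sectioning process uniformly in $t$; this is presumably why the statement is left as a conjecture.
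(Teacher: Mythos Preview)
The paper does not prove this statement: it is recorded there explicitly as a \emph{conjecture}, with no argument offered beyond the remark that the construction of $P_t$ can be carried out without it (one simply allows several points $x_j(t)$ over each $y_j(t)$). So there is no ``paper's own proof'' to compare your proposal against; you correctly identified this and presented a strategy rather than a proof.

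On the strategy itself, a couple of comments. In Step~1 your phrase ``degree one, i.e.\ distinct branches of $\Gamma$ have distinct images'' conflates two separate conditions: one needs both that distinct branches of $\Gamma$ go to distinct branches of $\Delta$ \emph{and} that each individual branch maps with degree one onto its image (otherwise a single branch could double-cover its image and Zariski's main theorem would not apply). Your Step~2 reformulation in terms of critical values of $\ell$ on the smooth fibres $S_\a\cap f^{-1}(t)$ actually handles both issues at once, so the slip is harmless, but the wording in Step~1 should be tightened. In Step~3 you have put your finger on exactly the obstruction: the incidence-variety dimension count is borderline, and the polar curve moves with $\ell$, so one cannot freeze $\Gamma$ and vary $\ell$ independently. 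Your proposed route via iterated general hyperplane sections down to the surface case and Puiseux/polar-quotient analysis is the natural one and is in the spirit of the L\^e--Teissier machinery the paper already invokes; the honest acknowledgement that controlling the deformation of $(\Gamma_\ell,\Delta_\ell)$ uniformly in $\ell$ is the hard step is appropriate, and is presumably exactly why the authors left this as a conjecture.
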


Now recall that $\ell_t$ induces a locally trivial fibration over $\delta(y_j(t)) \backslash \{y_j(t)\}$. If we look at the local situation at $x_j(t)$, we can apply the induction hypothesis to the germ ${\ell_t}_|: (X_t, x_j(t)) \to (D_t, y_j(t))$, which has an isolated singularity at $x_j(t)$ in the stratified sense, in lower dimension. That is, considering $B_j$ a small ball in $\bC^{N}$ centered at $x_j(t)$; $D_s$ a small disk in $D_t$ centered at $y_j(t)$ and $D_s^+$ a semi-disk of $D_s$ which contains $\delta(y_j(t)) \cap \mathring{D}_s$ in its interior, we obtain:
\begin{itemize}
\item a collapsing cone $P_j^+$ for $\ell_t$ along the semi-disk $D_s^+$;
\item a collapsing cone $P_j$ for $\ell_t$ along the path $D_s \cap \delta(y_j(t))$; and
\item a vector field $E_j$ in $\ell_t^{-1}(D_s) \cap B_j$; 
\end{itemize}
which give the collapsing of the map ${\ell_t}_|: B_j \cap \ell_t^{-1}(D_s) \to D_s $. See Figure \ref{Fig4}.

\begin{figure}[!h] 
\centering 
\includegraphics[scale=0.55]{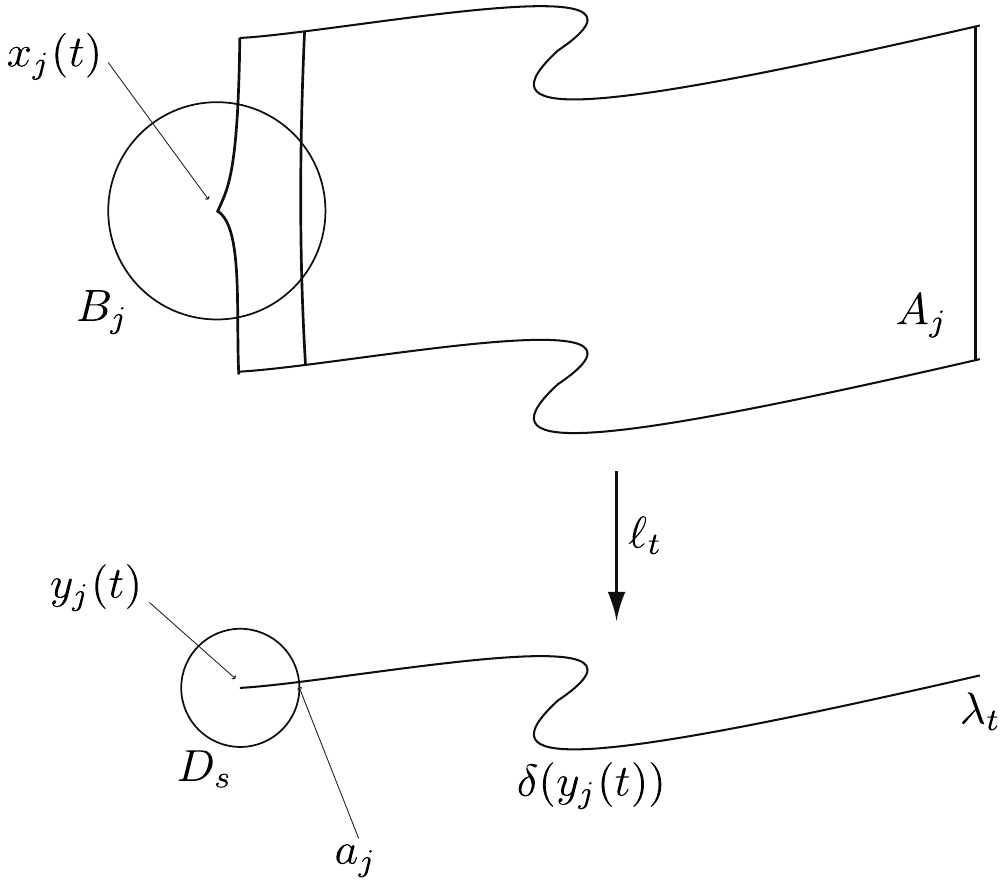}
\caption{}
\label{Fig4}
\end{figure}

\medskip
So the extremity of the wing $C_j$ is given by the collapsing cone $P_j$.

\medskip
\underline{Third step: extending the extremity of each wing until it hits $P_t'$:}
\medskip

First we need to construct the following vector fields on $A_j := \ell_t^{-1}\big(\delta(y_j(t)) \backslash \{y_j(t)\}\big)$ that will be used to extend the cone $P_j$ and glue it on the central polyhedron $P_t'$:

\medskip
\begin{itemize}
\item[$\bullet$] {\bf Vector Field $\Xi$:} Let $\xi$ be a smooth non-singular vector field on $\delta(y_j(t)) \backslash \{y_j(t)\}$ that goes from $y_j(t)$ to $\lambda_t = (0,t)$. Since the restriction of $\ell_t$ to each Whitney stratum $S_\a$ has maximum rank over $\delta(y_j(t)) \backslash \{y_j(t)\}$, we can lift $\xi$ to an integrable stratified vector field $\Xi$ on $A_j$ (see Lemma \ref{lemma_Ve}). In particular, for any $u \in \delta(y_j(t)) \backslash \{y_j(t)\}$ we can use the vector field $\Xi$ to obtain a stratified homeo\-morphism $\a_u: \ell_t^{-1}(\lambda_t) \to \ell_t^{-1}(u)$, which takes $P_t'$ to a polyhedron $\a_u(P_t')$ in $\ell_t^{-1}(u)$. 

\medskip
\item[$\bullet$] {\bf Vector Field $\calV$:} We can transport the vector field $E_t'$ of $\ell_t^{-1}(\lambda_t) = X_t \cap \{ \ell=0 \}$ given by the induction hypothesis to all the fibers $\ell_t^{-1}(u)$, for any $u \in \delta(y_j(t)) \backslash \{y_j(t)\}$. The transportation of $E_t'$ to $\ell_t^{-1}(u)$ is the vector field on $\ell_t^{-1}(u)$ given by the flow obtained as image by $\a_u$ of the flow given by $E_t'$. So we obtain a vector field $\calV$ on $A_j$ whose restriction to $\ell_t^{-1}(\lambda_t)$ is $E_t'$. The flow associated to $\calV$ takes a point $z \in \ell_t^{-1}(u)$ to the polyhedron $\a_u(P_t')$.

\medskip
\item[$\bullet$] {\bf Vector Field $\calV_1$:} Let $\theta$ be a smooth function on $\delta(y_j(t))$ such that $\theta(\lambda_t)=0$ and such that $\theta$ is non-singular and positive on $\delta(y_j(t)) \backslash \{\lambda_t \}$. It induces a function $\tilde{\theta} := \theta \circ \ell_t$ defined on $A_j$. Set:
$$\calV_1 := \calV + \tilde{\theta} \cdot \Xi \, ,$$
which is an integrable vector field, tangent to the strata of the interior of $A_j$ induced by $\mathcal{S}$. Furthermore, this vector field $\calV_1$ is pointing inwards on the boundary $\partial A_j$, i.e. transversal in $A_j$ to the strata of $\partial A_j$ induced by $\mathcal{S}$.
\end{itemize}

\medskip
Since the vectors $\vec \calV(z)$ and $\vec \Xi(z)$ are not parallel for any $z \in A_j \backslash P_t'$, the vector field $\calV_1$ is zero only on the vanishing polyhedron $P_t'$ of $\ell_t^{-1}(\lambda_t)$. Then if $z$ is a point in $A_j \backslash \ell_t^{-1}(\lambda_t)$, the orbit of $\calV_1$ that passes through $z$ has its limit point $z_1'$ in $P_t'$. 

\medskip
Moreover, since the orbit of $\calV$ that passes through $z$ has its limit point $z'$ in the transportation $\a_{\ell_t(z)}(P_t')$ of $P_t'$ to $\ell_t^{-1}\big(\ell_t(z)\big)$ by $\Xi$, it follows that $z_1'$ is the point corresponding to $z'$ by $\Xi$, that is, $z_1' = \a_{\ell_t(z)}(z')$. In fact, if $u:= \ell_t(z)$ and if $w := (\a_u)^{-1}(z)$ is the corresponding point in  $\ell_t^{-1}(\lambda_t)$, then by construction the integral curve $\mathcal{C}_\calV(z)$ associated to $\calV$ that contains $z$ is given by $\a_u(\mathcal{C}(w))$, where $\mathcal{C}(w)$ is the integral curve associated to $E_t'$ that contains $w$. 

\medskip
Set $a_j:= \partial D_s \cap \delta(y_j(t))$ and $P_j(a_j) := P_j \cap \ell_t^{-1}(a_j)$, where $P_j$ is the collapsing cone for $\ell_t$ at $x_j(t)$ along the path $D_s \cap \delta(y_j(t))$, as defined above. By the previous paragraph, $\calV_1$ takes $P_j(a_j)$ to $P_t'$.

\medskip
Since the action of the flow given by $\calV$ is simplicial, we can assume that the action of the flow given by $\calV_1$ is simplicial. Then the image of $P_j(a_j)$ by the flow of $\calV_1$ is a sub-polyhedron $P_j'$ of $P_t'$. Moreover, the orbits of the points in $P_j(a_j)$ give a polyhedron $R_j$. See Figure \ref{Fig5}. 

\begin{figure}[!h] 
\centering 
\includegraphics[scale=0.7]{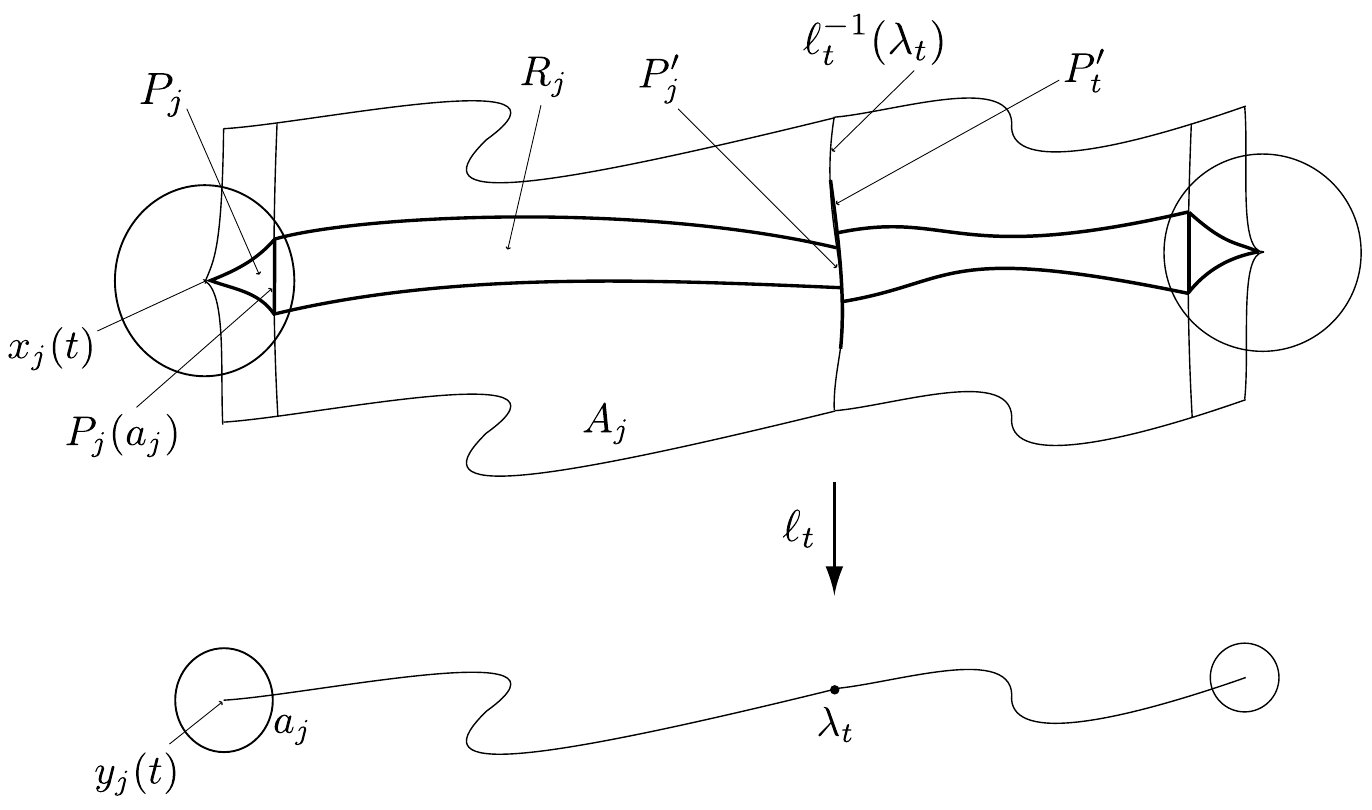}
\caption{}
\label{Fig5}
\end{figure}

\medskip
Set:
$$C_j := P_j \cup R_j \cup P_j' \, .$$
We call $C_j$ a {\it wing} of the polyhedron $P_t$. In the case when $X$ is smooth, it corresponds to a Lefschetz thimble.

\medskip
Then the polyhedron we are going to consider is: 
$$P_t := P_t' \bigcup_{j=1}^k C_j \, .$$
It is adapted to the stratification $\mathcal{S}$, since $P_j$ is adapted to $\mathcal{S}$ and the vector field $\mathcal{V}_1$ is tangent to the strata of $\mathcal{S}$.

\medskip
Now we have:

\begin{lemma} \label{prop_Et}
There is a continuous vector field $E_t$ on $X_t$ such that:
\begin{itemize}
\item[$(i)$] It is tangent to each stratum of $\mathring{X_t} \backslash P_t$ induced by $\mathcal{S}$, where $\mathring{X_t} := X_t \backslash \partial X_t$;
\item[$(ii)$] It is transversal to the strata of $\partial X_t$ and points inwards;
\item[$(iii)$] It is smooth on each stratum of $X_t \backslash P_t$;
\item[$(iv)$] It is integrable on $X_t \backslash P_t$;
\item[$(v)$] It is non-zero on $X_t \backslash P_t$ and it is zero on $P_t$;
\item[$(vi)$] The orbits of $E_t$ have a limit point at $P_t$ when the parameter goes to the infinity.
\end{itemize}
\end{lemma}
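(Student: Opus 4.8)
The plan is to assemble $E_t$ from the vector fields already built above, organised by the stratified map $\ell_t\colon X_t\to D_t$. The picture is that every orbit of $E_t$ should proceed in two phases. In the \emph{horizontal} phase the $\ell_t$-image of the point is pushed toward $Q_t:=\bigcup_{j=1}^k\delta(y_j(t))$; this motion is obtained by lifting the vector field $v_t$ of Lemma \ref{lemma_v_t} (in the form valid for the simple paths $\delta(y_j(t))$, as in the Remark following that lemma) through the locally trivial fibration $\varphi_t$ of Proposition \ref{prop_discriminant}, using Lemma \ref{lemma_Ve}. In the \emph{vertical} phase --- once the $\ell_t$-image lies over $Q_t$ --- the point is moved inside the fibres of $\ell_t$ toward $P_t$; this is governed, over $\delta(y_j(t))\setminus\{y_j(t)\}$, by the vector field $\calV_1$ on $A_j$, near the polar point $x_j(t)$ by the vector field $E_j$, and over $\lambda_t$ by $E_t'$, whose orbits have limit points, respectively, on $P_t'$, on $P_j$, and on $P_t'$, all contained in $P_t$.

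To carry this out I would fix a small closed neighbourhood $K$ of $Q_t$ in $D_t$ and cover $X_t$ by: the set $W_0:=\ell_t^{-1}(D_t\setminus\mathring{K})$, carrying the lift $\tilde v_t$ of $v_t|_{D_t\setminus Q_t}$ given by Lemma \ref{lemma_Ve}; for each $j$, a set $\ell_t^{-1}(D_s)\cap B_j$ around $x_j(t)$, carrying $E_j$; for each $j$, a set $\ell_t^{-1}(N_j)$ with $N_j$ a thin neighbourhood of $\delta(y_j(t))\setminus\{y_j(t)\}$ disjoint from $\Delta\cap D_t$, carrying the extension of $\calV_1$ to $\ell_t^{-1}(N_j)$ obtained by extending $\xi$ and the transport of $E_t'$ from $\delta(y_j(t))$ to $N_j$ (possible because $\ell_t$ is a trivial fibration over $N_j$); and a set around $X_t\cap\{\ell=0\}$ carrying $E_t'$. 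On $W_0$ the field $\tilde v_t$ never vanishes, but $W_0$ misses $P_t$ (which lies over $Q_t\subset\mathring{K}$), so no adjustment is needed there; near $x_j(t)$ the field $E_j$ already vanishes on $P_j$; but $\calV_1$ vanishes only on $P_t'$, so on $\ell_t^{-1}(N_j)$ I would multiply it by a fixed continuous function $\chi$, positive on $X_t\setminus P_t$, zero on $P_t$, smooth on each stratum (a stratum-wise smoothing of the distance to $P_t$), which does not alter its orbits off $P_t$ but makes it vanish on all of $P_t$. Then, choosing $K$, the $N_j$, $D_s$ and $B_j$ so that in every nonempty overlap the two competing models never point into opposite half-spaces --- which holds because such overlaps miss $P_t$, and there the $\ell_t$-images of the two models are respectively transverse to $Q_t$ and along $Q_t$, exactly as in the gluings of Proposition \ref{prop_vector_field} --- I would define $E_t$ as the convex combination of the (adjusted) models by a partition of unity subordinate to the cover. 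By construction $E_t$ is continuous, stratified, smooth and (via Lemma \ref{lemma_Ve}) integrable on each stratum of $X_t\setminus P_t$, tangent to the interior strata, transversal to and inward along $\partial X_t$, nonzero off $P_t$ and zero on $P_t$; this gives $(i)$--$(v)$.

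It remains to prove $(vi)$, and this is where the work lies. Given $z\in X_t\setminus P_t$, I would follow its $E_t$-orbit: since the $\ell_t$-image is governed by $v_t$ (up to the positive rescaling by $\chi$, which does not change orbits), Lemma \ref{lemma_v_t} forces it to converge to a point of $Q_t$, hence the orbit is eventually trapped in $\ell_t^{-1}(N_j)\cup(\ell_t^{-1}(D_s)\cap B_j)$ for one value of $j$; there the orbit, which off $P_t$ coincides with an orbit of $\calV_1$ (resp. of $E_j$ near $y_j(t)$), has a limit point in $P_t'$ (resp. $P_j$) by the discussion preceding this lemma (resp. by the induction hypothesis). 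Since $P_t$ is closed and the flow of $E_t$ is complete on $X_t\setminus P_t$, concatenating the two phases shows that the orbit accumulates on $P_t$; a short argument modelled on the proof of $(6)$ in Proposition \ref{prop_vector_field}, using that $\ell_t$ is finite, upgrades this to an honest limit. I expect the main obstacle to be precisely this coherent matching of the two phases --- choosing $K$, the tubes $N_j$, the function $\chi$ and the partition of unity so that an orbit which enters a small enough neighbourhood of $P_t$ can never leave it and does not oscillate indefinitely between the horizontal and vertical regimes, so that it has a \emph{single} accumulation point; the remaining verifications are routine applications of Lemma \ref{lemma_Ve} to keep the intermediate lifts stratified and integrable.
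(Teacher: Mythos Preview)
Your overall architecture matches the paper's: lift $v_t$ far from $Q_t$, use $E_j$ near each polar point $x_j(t)$, and use a $\calV_1$-type field over neighbourhoods of the paths $\delta(y_j(t))$, then glue by a partition of unity. But there is a genuine gap in your cover. The open sets you list are
\[
W_0=\ell_t^{-1}(D_t\setminus\mathring K),\qquad \ell_t^{-1}(D_s)\cap B_j,\qquad \ell_t^{-1}(N_j),
\]
with $y_j(t)\notin N_j$. These do \emph{not} cover the special fibre away from the ball: the set $\ell_t^{-1}\bigl(y_j(t)\bigr)\setminus B_j$ is nonempty (it is a compact fibre of $\ell_t$ with a small ball removed), lies over $y_j(t)\in Q_t\subset\mathring K$, is disjoint from $B_j$, and is not in $\ell_t^{-1}(N_j)$ since $y_j(t)\notin N_j$. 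Consequently your $E_t$ is simply undefined there, and your argument for $(vi)$ breaks down for the same reason: an orbit whose $\ell_t$-image tends to $y_j(t)$ (and such orbits exist, since $\xi_t$ in Lemma~\ref{lemma_v_t} is surjective onto $Q_t$) can drift into $\ell_t^{-1}(D_s)\setminus B_j$, where you have no model available.

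The paper's proof closes exactly this hole by introducing a fourth piece $W_{j,2}:=\ell_t^{-1}(\mathring D_s\cap\mathring V_j)\setminus B_{r'}$ carrying the lift of an auxiliary vector field $\hat\omega_j$ on $\mathring D_s\cap\mathring V_j$ that pushes points \emph{away} from $y_j(t)$ along $\delta(y_j(t))$ (toward $a_j$), chosen so that no positive combination with the other local model vanishes. This extra horizontal push is what guarantees that any orbit over a neighbourhood of $y_j(t)$ but outside $B_j$ escapes the bad fibre in finite time and enters the region where your $\calV_1$-type field takes over. Correspondingly, the paper does not describe the $\ell_t$-image of $E_t$ as being governed by $v_t$ alone; it is a mixture of $v_t$, $\hat\omega_j$, $\tilde\omega$ and the projection of $\calV_1$, and the verification of $(vi)$ is a short case analysis tracking which $W_{j,i}$ the orbit currently occupies rather than an appeal to Lemma~\ref{lemma_v_t}. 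Your idea of killing $\calV_1$ on the wing $C_j$ by multiplying by a cut-off $\chi$ is reasonable and essentially equivalent to the paper's transport construction for $W_{j,4}$, but it cannot substitute for the missing piece over $\ell_t^{-1}(y_j(t))\setminus B_j$.
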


\medskip
The vector field $E_t$ is obtained by gluing several vector fields on $X_t$ given by the lifting by $\varphi_t$ of suitable vector fields on $D_t$. The detailed proof of Lemma \ref{prop_Et} is quite annoying since it contains too many technical steps and constructions, so we present it separately in the next section.

\medskip
The flow defined by the vector field $E_t$ of Lemma \ref{prop_Et} gives a continuous, surjective and simplicial map $\tilde{\xi}_t$ from $\partial X_t$ to $P_t$ such that $X_t$ is homeomorphic to the simplicial map cylinder of $\tilde{\xi}_t$ (see the proof of Proposition \ref{prop_vector_field}). This proves Proposition \ref{prop_a}.

%%%%%%%%%%%%%%%%%
\medskip
\subsection{Proof of Proposition \ref{prop_b}: constructing the collapsing cone}
\label{subsection_collapsing}
\ \\

Given a simple path $\gamma$ in $\BD_{\eta_2}$ joining $0 \in \BD_{\eta_2}$ and some $t_0 \in \BD_{\eta_2}$, we have to construct a polyhedron $P_\gamma$, adapted to the stratification induced by $\mathcal{S}$, and a continuous vector field $E$ in $X_\gamma := X \cap f^{-1}(\gamma) \cap \B_\e$, tangent to each stratum of $X_\gamma$, satisfying the conditions $(i)$, $(ii)$ and $(iii)$ of Proposition \ref{prop_b}. 

\medskip
We are going to construct $P_\gamma$. The construction of the vector field $E$ follows the same steps of the construction of the vector field $E_t$ of the subsection above, which is described with details in the next section.

\medskip
First, consider the polyhedron $P_{t_0}$, contained in the Milnor fiber: 
$$X_{t_0} = X \cap f^{-1}(t_0) \cap \B_\e$$
and constructed as in Proposition \ref{prop_a}. It is given by the union:
$$P_{t_0} = P_{t_0}' \bigcup_{j=1}^k C_j \, ,$$
where each $C_j$ is a wing glued to $P_{t_0}'$ along a sub-polyhedron $(P_j)_{t_0}'$ of $P_{t_0}'$ and $P_{t_0}'$ is the vanishing polyhedron for the restriction $f'$ of $f$ to $X \cap \{\ell=0\}$. 

\medskip
By the induction hypothesis, we also have a collapsing cone $P_\gamma'$ in $X_\gamma \cap \{\ell=0\}$ and a continuous vector field $G'$ in $X_\gamma \cap \{\ell=0\}$ that gives the degeneration of $f'$. That is, the restriction of $G'$ to $(X_\gamma \cap \{\ell=0\}) \backslash P_\gamma'$ is an integrable vector field whose associated flow defines a homeomorphism from $(X_{t_0} \cap \{\ell=0\}) \backslash P_{t_0}'$ to $(X_{t_0} \cap \{\ell=0\}) \backslash \{0\}$ that extends to a continuous map from $X_{t_0} \cap \{\ell=0\}$ to $X_0 \cap \{\ell=0\}$ and that sends $P_t'$ to $\{0\}$.

\medskip
Recall from Proposition \ref{prop_discriminant} that the map $\phi = (\ell,f)$ induces a stratified submersion: 
$$\phi_|: \B_\e \cap X \cap \phi^{-1}(\BD_{\eta_1} \times \BD_{\eta_2} \setminus \Delta) \to \BD_{\eta_1} \times \BD_{\eta_2} \setminus \Delta \, ,$$
where $\Delta \subset  \BD_{\eta_1} \times \BD_{\eta_2}$ is the polar discriminant of $f$ relatively to the linear form $\ell$.

\medskip
Set $\Lambda := \{0\} \times \gamma$, which we suppose that intersects $\Delta$ only at $0 \in \bC^2$. Then we define the $2$-dimensional polyhedra $T_j$ in $\BD_{\eta_1} \times \gamma$, for each $j=1, \dots, k$, as in subsection \ref{subsection_ss}. That is:
$$T_j := \bigcup_{t \in \gamma} \delta(y_j(t)) \, ,$$
where each $\delta(y_j(t))$ is a simple path connecting $y_j(t)$ and $\lambda_t= (0,t)$, and $\delta(y_j(0))$ is the origin.

\medskip
For each $x_j(t)$ over $y_j(t)$, with $t \in \gamma$, choose a small radius $r(t)$ such that the set: 
$$\calB_j := \bigcup_{t \in \gamma} \B_{r(t)}(x_j(t))$$
is a neighborhood of $\cup_{t \in \gamma^*}\{ x_j(t) \}$ conic from $0$, where $r(t)$ is a real analytic function of $t$ with $r(0)=0$ and $\gamma^* := \gamma \backslash \{0\}$.

\medskip
To each $\calB_j$ one can associate a neighborhood: 
$$\calA_j := \bigcup_{t \in \gamma} \BD_{s(t)}(y_j(t))$$
in $\BD_{\eta_1} \times \gamma$, where $s(t)$ is an analytic function of $t \in \gamma$ with $0<s(t) \ll r(t)$, if $t \neq 0$, and $s(0)=0$.

\medskip
Also let $\calU$ be a neighborhood of $\Lambda \backslash \{0\}$, conic from $0$, that meets all the $\calA_j$'s, but not containing any $y_j(t)$. See Figure \ref{Fig6}.

\begin{figure}[!h]
\centering
\includegraphics[scale=0.8]{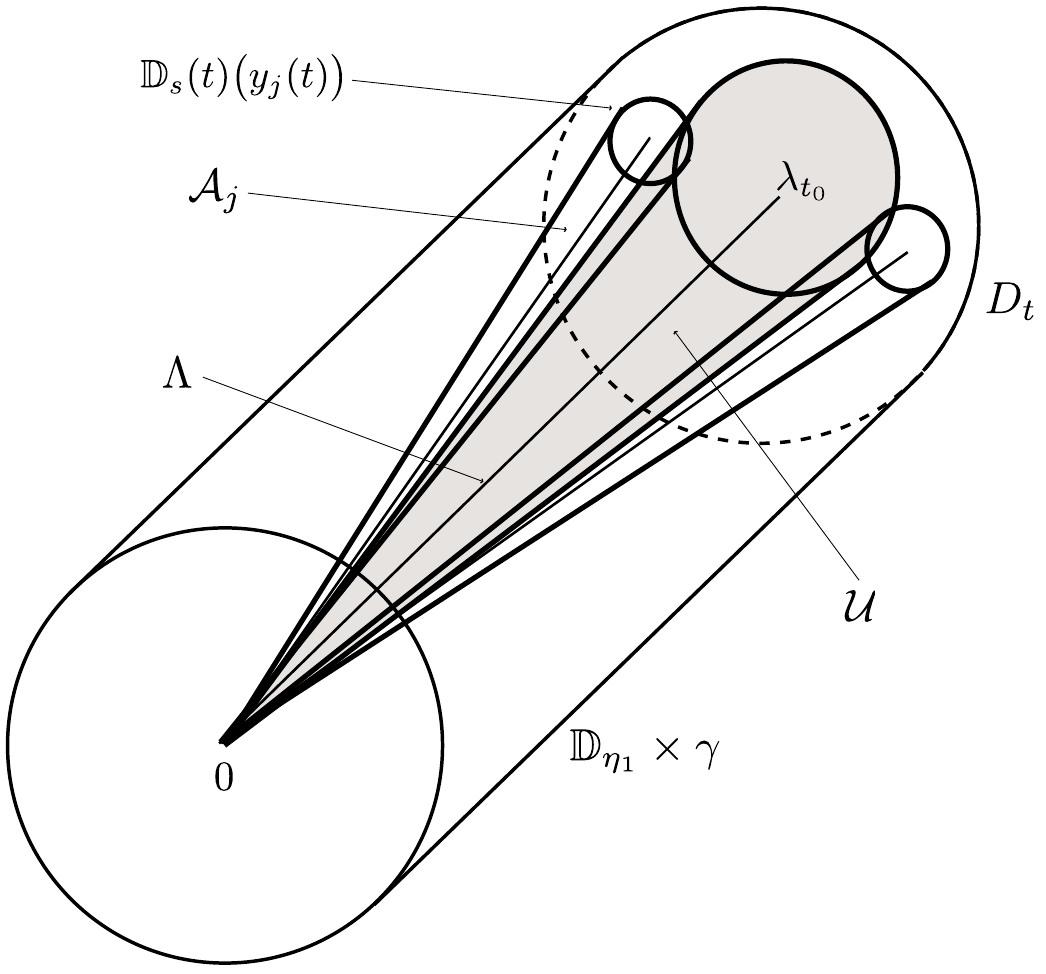}
\caption{}
\label{Fig6}
\end{figure}

\medskip
Notice that the stratified vector field $G'$ that gives the degeneration of the restriction of $f$ to $X \cap \{\ell=0\}$ is defined on $\phi^{-1}(\Lambda)$. Set:
$$\tilde{\calU} := \phi^{-1}(\calU) \cap X_\gamma \, . $$

\medskip
Since $\calU$ is contractible, we can extend the vector field $G'$ to a integrable stratified vector field $G_\calU$, defined on $\tilde{U}$. Notice that the flow given by the vector field $G_\calU$ sends the intersection $P_{t_0} \cap \tilde{\calU}$ to $0$, where $P_{t_0}$ is the polyhedron in $X_{t_0}$ previously constructed.

\medskip
One can also construct an integrable vector field $G_j$ on each $\calB_j \backslash \{0\}$ that trivializes it over $\gamma \backslash \{0\}$.

\medskip
Then, using a partition of unity $(\rho_\calU, \rho_1, \dots, \rho_k)$ adapted to $\tilde{\calU}, \calB_1, \dots, \calB_k$, we glue all the vector fields $G_j$'s and $G_\calU$ together. We obtain a continuous trivializing vector field: 
$$G := \rho_\calU G_\calU + \sum_{j=1}^k \rho_j G_j$$
in $\tilde{\calU} \cup_{j=1}^k \calB_j$ such that:
\begin{itemize}
\item[$\circ$] it is smooth on each stratum of $X_\gamma \cap (\tilde{\calU} \cup_{j=1}^k \calB_j)$ ;
\item[$\circ$] it is rugose and hence integrable;
\item[$\circ$] it projects to a radial vector field in $\gamma$ that converges to $0$. 
\end{itemize}

\medskip
Then we construct the vanishing cone $P_\gamma$ from the vanishing polyhedron $P_{t_0}$ using the flow of $G$.

%%%%%%%%%%%%%%%%%%%%%%%%%%%%%%%%%%%%%
\medskip
\section{Proof of Lemma \ref{prop_Et}: constructing the vector field $E_t$}
\label{section_prop_Et}

In this section we give the detailed construction of the vector field $E_t$ of Lemma \ref{prop_Et}, whose flow gives a continuous, surjective and simplicial map $\tilde{\xi}_t$ from the boundary of the Milnor fiber $\partial X_t$ to the polyhedron $P_t$ constructed in the previous section, such that the Milnor fiber $X_t := \phi^{-1}(\BD_{\eta_1} \times \{t\}) \cap \B_\e(x)$ is homeomorphic to the simplicial map cylinder of $\tilde{\xi}_t$.

\medskip
Recall that we have fixed a linear form $\ell: \bC^N \to \bC$ that takes $0 \in \bC^N$ to $0 \in \bC$ and that satisfies the conditions of Lemma \ref{lemma_polar_curve}. Then $\Gamma$ is the polar curve of $f$ relatively to $\ell$ at $0$ and $\Delta$ is the polar discriminant of $f$ relatively to $\ell$ at $0$. 

\medskip
Also recall from Proposition \ref{prop_discriminant} that the map $\phi=(\ell,f)$ induces a stratified submersion: 
$$\phi_|: \B_\e \cap X \cap \phi^{-1}(\BD_{\eta_1} \times \BD_{\eta_2} \setminus \Delta) \to \BD_{\eta_1} \times \BD_{\eta_2} \setminus \Delta$$
and that for each $t \in \BD_{\eta_2}^*$ fixed, the restriction $\ell_t$ of $\ell$ to the Milnor fiber $X_t$ induces a topological locally trivial fibration:
$$\varphi_t: X_t \backslash \ell_t^{-1}\big(\{y_1(t), \dots, y_k(t)\}\big) \to D_t \backslash \{y_1(t), \dots, y_k(t)\} \, ,$$
where $D_t := \BD_{\eta_1} \times \{t\}$ and $\{y_1(t), \dots, y_k(t)\} = \Delta \cap D_t$.

\medskip
As before, take a point $\lambda_t$ in $D_t$ such that $\lambda_t \notin \{y_1(t), \dots, y_k(t)\}$. Also, for each $j = 1, \dots, k$, let $\delta(y_j(t))$ be a simple path in $D_t$ starting at $\lambda_t$ and ending at $y_j(t)$, such that two of them intersect only at $\lambda_t$. We defined the set $Q_t := \cup_{j=1}^k \delta(y_j(t))$.

\medskip
Also recall that we can apply the induction hypothesis to the restriction $f'$ of $f$ to the intersection $X \cap \{\ell=0\}$, which has complex dimension $n-1$. We obtain a vanishing polyhedron $P_t'$ in the intersection $X_t \cap \{ \ell=0 \}$ and a vector field $E_t'$ that deformation retracts it onto $P_t'$. 

\medskip
The vector field $E_t$ is obtained by gluing several vector fields on $X_t$ given by the lift by $\varphi_t: X_t \setminus \ell_t^{-1}(\{y_1(t), \dots, y_k(t) \}) \to D_t \setminus \{y_1(t), \dots, y_k(t) \}$ of suitable vector fields on the disk $D_t$. This is possible thanks to the first isotopy lemma of Thom-Mather, since the restriction of $\varphi_t$ to each stratum of the Whitney stratification of $X_t$ induced by the Whitney stratification of $X$ is of maximum rank. The resulting vector fields are rugose in the sense of \cite{Ve}, and hence they are integrable.

\medskip
Recall that the polyhedron $P_t$ is the union of the wings $C_j$ and the polyhedron $P_t'$ given by the induction hypothesis (see subsection \ref{subsection_CLP2}). Moreover, each wing $C_j$ consists of a collapsing cone $P_j$, a product $R_j$ and the gluing polyhedron $P_j'$ on $P_t'$, that is:
$$ C_j = P_j \cup R_j \cup P_j' \, . $$ 
See Figure \ref{Fig5}.

\medskip
Then it is natural that the construction of the vector field $E_t$ concerns at least three subsets of the Milnor fiber $X_t$: the points that are taken to $P_t' \backslash P_j'$ by the flow associated to $E_t$; the points that are taken to $P_j'$ and the points that are taken to $C_j \backslash P_j'$. This justifies the complexity of the construction given below.

%%%
\subsection{First step: decomposing $D_t$}
\ \\

Let $v_t$ be the continuous vector field on $D_t$, smooth and non-zero outside $Q_t$, transversal to $\partial D_t$ and zero on $Q_t$, defined in Lemma \ref{lemma_v_t}. If $q_t: [0,\infty[ \times \partial D_t \to D_t$ is the flow associated to $v_t$, then set:
$$V := D_t - q_t([0,A[ \times \partial D_t) \, ,$$
for some $A \gg 0$, which is a closed neighborhood of $Q_t$ whose boundary:
$$\partial V = q_t(\{A\} \times \partial D_t)$$ 
is smooth and transversal to each $\partial D_s(y_j(t))$ as in Figure \ref{fig8}.

\begin{figure}[!h] 
\centering 
\includegraphics[scale=0.5]{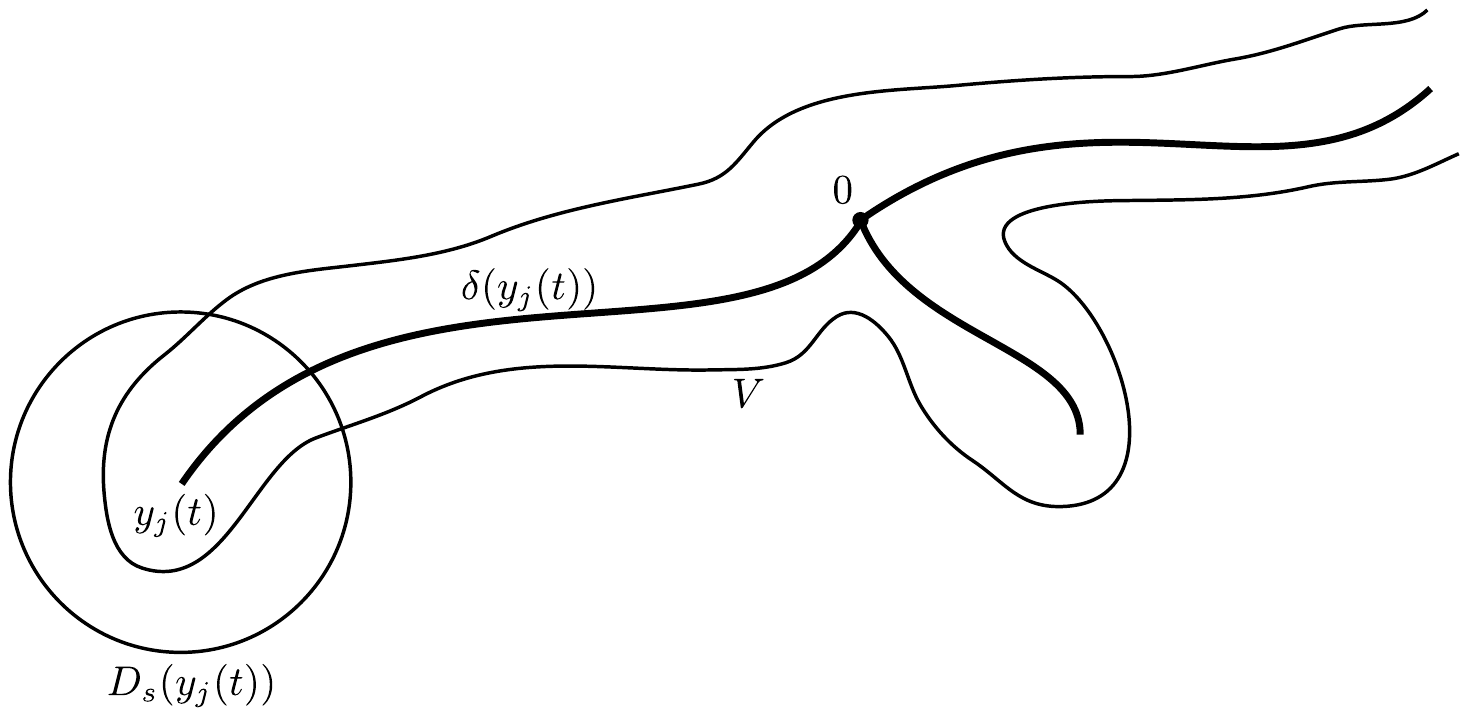}
\caption{}
\label{fig8}
\end{figure}

\medskip
Then we construct the vector field $E_t$ on $X_t$ by gluing a vector field $\tilde{\tilde{E_t}}$ in $\ell_t^{-1}(D_t \backslash V')$, 
where $V' := D_t - q_t([0,A'[ \times \partial D_t)$, with $A' > A$, $A'-A \ll 1$; and a vector field $\tilde{E_t}$ in $\ell_t^{-1}(V)$, using a partition of unity.

\medskip
The vector field $\tilde{\tilde{E_t}}$ in $\ell_t^{-1}(D_t \backslash V')$ is the lifting of the vector field $v_t$. It is integrable (see Lemma \ref{lemma_Ve}), and it is tangent to the strata of $\phi^{-1}(D_t \backslash V') \cap \mathring{\B}_\e$ and transversal (and pointing inwards) to the strata of $\partial X_t$. 

\medskip
The construction of the vector field $\tilde{E_t}$ in $\ell_t^{-1}(V)$ is much more complicated. We are going to do it in the rest of this subsection.

%%%
\medskip
\subsection{Second step: decomposing $V$}
\ \\

We first decompose $V$ into ``branches" $V_j$ as follows: each ``branch" $V_j$ is a closed neighborhood of $\delta(y_j(t)) \backslash \{0\}$ whose boundary is composed by $\partial V \cap V_j$ and two simple paths that one can suppose to be orbits of the vector field $v_t$ constructed above. See Figure \ref{fig9}. 

\begin{figure}[!h] 
\centering 
\includegraphics[scale=0.7]{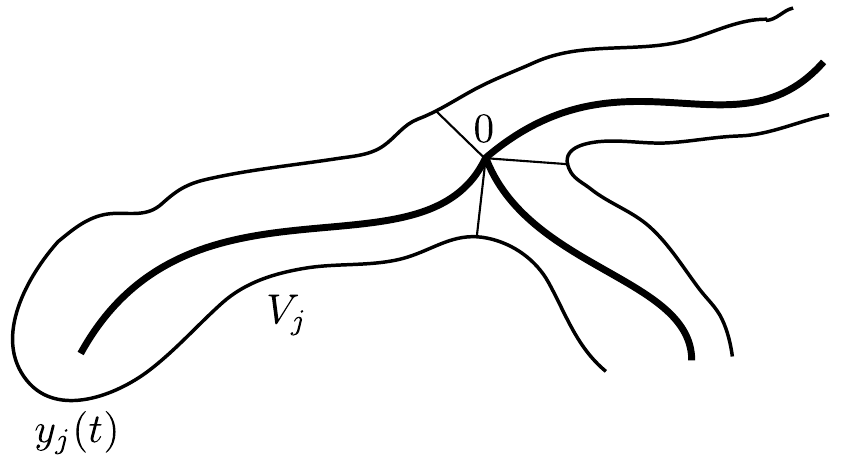}
\caption{}
\label{fig9}
\end{figure}

\medskip
We will construct the vector field $\tilde{E_t}$ by gluing the vector fields $\tilde{E}_{t,j}$ that we are going to construct on each $\ell_t^{-1}(V_j)$.  In other words, we will construct a vector field $\tilde{E}_{t,j}$ on $\ell_t^{-1}(V_j)$, for each $j$ fixed, which is continuous, integrable, tangent to the strata of $\mathcal{S}$, non-zero and smooth on $\ell_t^{-1}(V_j) \backslash C_j$, and zero on $C_j$, where $C_j$ is the polyhedron defined in subsection \ref{subsection_CLP2}.

%%%
\medskip
\subsection{Third step: covering $\ell_t^{-1}(V_j)$ by open sets $W_{j,i}$}
\ \\

Fix $j \in \{1, \dots, k\}$. The approach of the construction of each vector field $\tilde{E}_{t,j}$ will be the following: we will cover $\ell_t^{-1}(V_j)$ by open sets $W_{j,1}$, $W_{j,2}$, $W_{j,3}$ and $W_{j,4}$. Then we will construct the vector fields $\tilde{E}_{t,j,i}$ on $W_{j,i}$, for $i=1, \dots, 4$, in such a way that each orbit of the vector field $\tilde{E}_{t,j}$ obtained by gluing them with a partition of unity have a limit point at $P_t$.

\medskip
As before, given positive real numbers $r$ and $s$, let $B_r$ denote the ball around $x_j(t)$ in $\bC^N$ of radius $r$ and let $D_s$ denote the disk around $y_j(t)$ in $D_t$ of radius $s$.

\medskip
Let $r$ and $r'$ be small enough positive real numbers such that $r'<r$ and $r-r' \ll 1$. Let us cover $\ell_t^{-1}(V_j)$ by the open sets $W_{j,1}$, $W_{j,2}$, $W_{j,3}$ and $W_{j,4}$ defined as follows:
$$\bullet \ \ W_{j,1} := \ell_t^{-1}(\mathring{D}_s \cap \mathring{V_j}) \cap \mathring{B_r} \, $$
and
$$\bullet \ \ W_{j,2} := \ell_t^{-1}(\mathring{D}_s \cap \mathring{V_j}) \setminus B_{r'} \, .$$

\medskip
To define $W_{j,3}$ and $W_{j,4}$ we have to do a construction first. Set:
$$W_{j,3}':= \ell_t^{-1}(\mathring{V_j} \backslash D_{s'}) \, ,$$
where $s'<s$ and $s-s' \ll 1$. 

\medskip
We can construct a vector field $\omega_j$ in $V_j \backslash \mathring{D}_{s'}$ which is smooth, non zero outside $\{0\}$, zero on $\{0\}$, with trajectories transversal to $\partial V \cap (V_j \backslash D_{s'})$ and to $\partial D_{s'} \cap V_j$, as in Figure \ref{fig10}.

\begin{figure}[!h] 
\centering 
\includegraphics[scale=0.6]{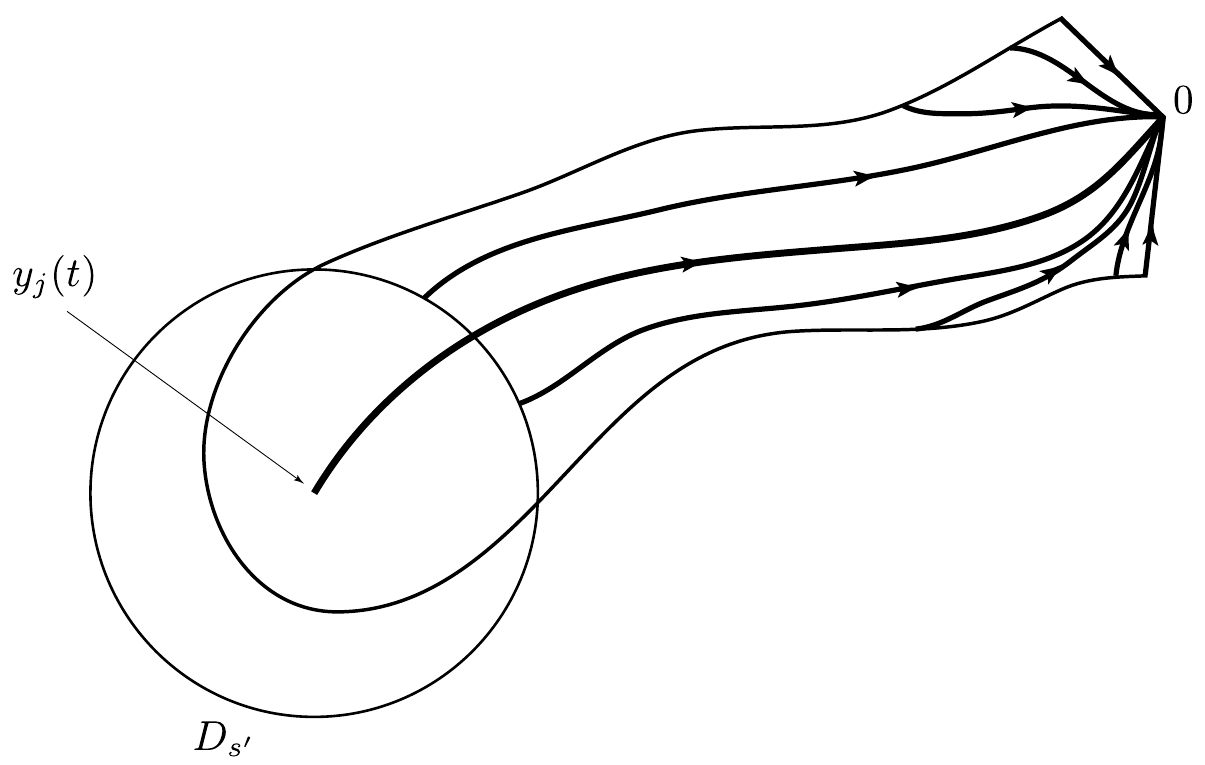}
\caption{}
\label{fig10}
\end{figure}

\medskip
Since the Whitney stratification $(S_\a)_{\a \in A}$ of $X$ induces a Whitney stratification on $\ell_t^{-1}(V_j \backslash \mathring{D}_{s'})$ and since the restriction of $\ell_t$ to $\ell_t^{-1}(V_j \backslash \mathring{D}_{s'}) \cap S_\a$ is a submersion for each $\a \in A$, we can lift $\omega_j$ to a continuous vector field $H_j$ in $\ell_t^{-1}(V_j \backslash \mathring{D}_{s'})$ that is rugose, smooth and tangent to each stratum, and that trivializes $\ell_t^{-1}(V_j \backslash \mathring{D}_{s'})$ over $V_j \backslash \mathring{D}_{s'}$. 

\medskip
Recall that the induction hypothesis applied to the restriction of $f$ to $X \cap \{\ell=0\}$ gives a vanishing polyhedron $P_t'$ in $X_t \cap \{ \ell=0 \}$ and a vector field $E_t'$ that deformation retracts $X_t \cap \{ \ell=0 \}$ onto $P_t'$.

\medskip
Then one can transport the vector field $E_t'$ on $\ell_t^{-1}(0)$ to all the fibers $\ell_t^{-1}(u)$ for $u \in V_j \backslash \mathring{D}_{s'}$. This way we obtain a vector field $\calV$ in $\ell_t^{-1}(V_j \backslash \mathring{D}_{s'})$ which is integrable, tangent to each stratum of $\ell_t^{-1}(u) \cap \mathring{\B}_\e$ and transversal to each stratum of $\ell_t^{-1}(u) \cap \BS_\e$, for any $u \in V_j \backslash \mathring{D}_{s'}$.

\medskip
Now consider the vector field $\vartheta_1$ in $\ell_t^{-1}(V_j \backslash \mathring{D}_{s'})$ given by:
$$\vartheta_1 := \calV + H_j \, ,$$
which is integrable, tangent to the strata of the stratification induced by $\mathcal{S}$, transversal to the strata of $\BS_\e \cap \ell_t^{-1}(V_j \backslash \mathring{D}_{s'})$, non-zero outside $P_t'$ and zero on $P_t'$.

\medskip
One can see (as in the case of the vector field $\vartheta_1$ of Subsection \ref{subsection_CLP2}) that the orbits of $\vartheta_1$ have a limit point in $P_t'$. The orbits $A(V_j,r)$ that intersect the set $\ell_t^{-1}(\mathring{V}_j \cap \partial D_s) \cap \mathring{B}_r$ by the action of $\vartheta_1$ is a set that we call $W_{j,4}'$. Then we define the set:
$$\bullet \ \ W_{j,4} := W_{j,4}' \cup W_{j,1} \, .$$

\medskip
Finally, the set $W_{j,3}$ is given by:
$$\bullet \ \ W_{j,3} := W_{j,3}' \backslash A(V_j',r') \, ,$$
where $r'<r$, with $r-r' \ll 1$, and $V_j' := D_t \backslash q_t([0, A'[ \times \partial D_t)$, with  $A'<A$ and $A-A' \ll 1$. One can check that both $W_{j,3}$ and $W_{j,4}$ are open sets.

%%%
\medskip
\subsection{Fourth step: constructing the vector fields $\tilde{E}_{t,j,i}$}
\ \\

\begin{itemize}
\item[$(1)$] \underline{Construction of $\tilde{E}_{t,j,1}$}:

\medskip
We can consider a smooth vector field $\tilde{\omega}$ on $V_j$ which is zero on $\delta(y_j(t))$, transversal to $\partial V_j$ and tangent to $\partial D_s \cap V_j$, like in Figure \ref{fig11}.

\begin{figure}[!h] 
\centering 
\includegraphics[scale=0.7]{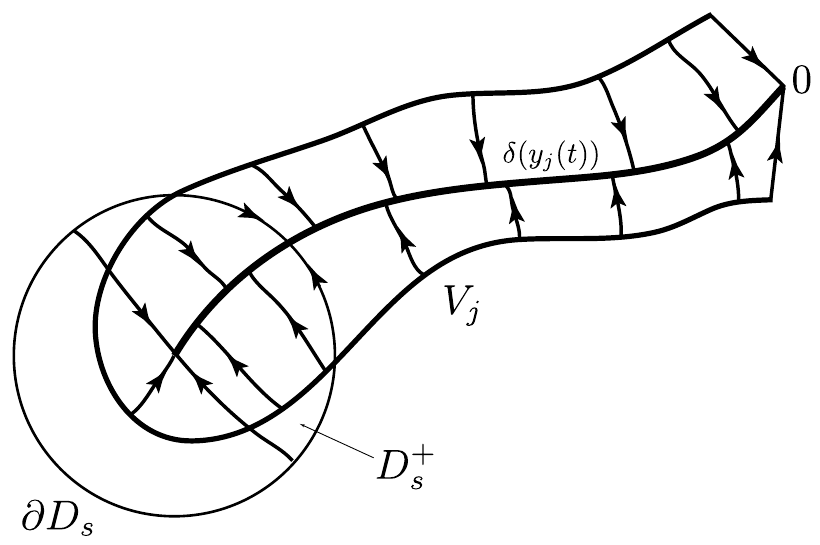}
\caption{}
\label{fig11}
\end{figure}

\medskip
Let $D_s^+$ be a semi-disk of $D_s$ which contains $\delta(y_j(t)) \cap \mathring{D}_s$ in its interior. 

\medskip
We will lift $\tilde{\omega}$ to a rugose vector field $\tilde{H}_j$ in $\ell_t^{-1}(\mathring{D}_s \cap \mathring{V}_j) \cap B_r$, which is zero on $\ell_t^{-1} \big( \mathring{D}_s \cap \delta(y_j(t)) \big)$, tangent to the strata of $\mathcal{S}$ and of $\ell_t^{-1}(\mathring{D}_s \cap \mathring{V}_j) \cap S_r$, where $S_r := \partial B_r$, in the following way: 

\medskip
Recall that we can apply the induction hypothesis to the restriction:
$$(\ell_t)_|: \ell_t^{-1}(D_s^+ \cap \mathring{D}_s \cap \mathring{V}_j) \cap \mathring{B}_r \to D_s^+ \cap \mathring{D}_s \cap \mathring{V}_j  \, ,$$
which has an isolated singularity at $x_j(t)$ in the stratified sense, since \\ $\ell_t^{-1}(D_s^+ \cap \mathring{D}_s \cap \mathring{V}_j)$ has complex dimension $n-1$, where $n$ is the dimension of $X$. 

\medskip
Then we obtain a vector field $E_j$ and a collapsing cone $P_j^+$. Let:
$$ \psi: [0,+\infty[ \times \big( \ell_t^{-1}(\mathring{D_s^+} \cap \mathring{V}_j) \cap S_r \big) \to \ell_t^{-1}(\mathring{D_s^+} \cap \mathring{V}_j) \cap B_r$$
be the flow associated to $E_j$ and set:
$$P_j(u) := \psi \big( \{u\} \times \ell_t^{-1}(\mathring{D_s^+} \cap \mathring{V}_j) \cap S_r \big) \, ,$$
where $u \geq 0$. 

\medskip
The Whitney stratification $\mathcal{S}$ induces a Whitney stratification of $P_j(u)$ (see Lemma \ref{lemma_strata} and notice that $P_j(0) = \ell_t^{-1}(\mathring{D_s^+} \cap \mathring{V}_j) \cap S_r$ is the intersection of $\ell^{-1}(\mathring{D_s^+} \cap \mathring{V}_j)$ with the stratified space $X_t \cap S_r$). Moreover, the restriction of $\ell_t$ to each stratum has maximum rank. So by Lemma \ref{lemma_Ve} we can lift the vector field $\tilde{\omega}$ over $\mathring{D_s^+} \cap \mathring{V}_j$ to a vector field that is tangent to the strata of $P_j(u)$. 

\medskip
On the other hand, for any point in $\ell_t^{-1}\big( (\mathring{D}_s \backslash \mathring{D_s^+}) \cap \mathring{V}_j \big)$ we just ask the vector field $\tilde{H}_j$ to be tangent to the strata of $\mathcal{S}$ and to lift $\tilde{\omega}$. This can be done locally and then $\tilde{H}_j$ is obtained by a partition of unity.

\medskip
Notice that at any point of $\mathring{B}_r \cap \ell_t^{-1} \big( \mathring{D}_s^+  \cap \mathring{V}_j \big) - \{x_j(t)\}$ and at any point of $(\mathring{B}_r \backslash B_{r'}) \cap \ell_t^{-1} \big( (\mathring{D}_s \backslash D_s^+) \cap \mathring{V}_j \big)$, for $r'<r$ with $r-r'\ll 1$, one can extend the vector field $E_j$ on a small open neighborhood.

\medskip
Now we construct $\tilde{E}_{t,j,1}$ as follows:

\medskip
\begin{itemize}
\item[$\circ$] Over a small open neighborhood $U_{x_j(t)}$ of $x_j(t)$, consider the zero vector field.

\item[$\circ$] For any $z \in \mathring{B}_r \cap \ell_t^{-1}(D_s^+ \cap \mathring{D}_s \cap \mathring{V}_j) \backslash \{x_j(t)\}$, take an open neighborhood $U_z$ of $z$ small enough such that it does not contain $x_j(t)$, it is contained in $\mathring{B}_r \cap \ell_t^{-1}(\mathring{D}_s \cap \mathring{V}_j)$ and $E_j$ is well defined on it. Then in $U_z$ we define the vector field:
$$E_{1,z} := E_{j |U_z} + \tilde{H}_{j_{|U_z}} \, .$$
This vector field is rugose, tangent to the strata of $\mathcal{S}$, non-zero outside the intersection of $U_z$ and $P_j$ and zero on $P_j \cap U_z$, where: 
$$P_j := P_j^+ \cap \ell_t^{-1}\big( \delta(y_j(t)) \big) \, .$$

\item[$\circ$] For any $z \in \mathring{B}_{r'} \cap \ell_t^{-1} \big( (\mathring{D}_s \backslash D_s^+) \cap V_j \big)$, take a small open neighborhood $U_z$ of $z$ and set
$$E_{1,z} := \tilde{H}_{j_{|U_z}} .$$

\item[$\circ$] For any $z \in (\mathring{B}_r \backslash B_{r'}) \cap \ell_t^{-1} \big( (\mathring{D}_s \backslash D_s^+) \cap V_j \big)$, take a small open neighborhood $U_z$ of $z$ contained in $(\mathring{B}_r \backslash B_{r'}) \cap \ell_t^{-1} \big( (\mathring{D}_s \backslash D_s^+) \cap V_j \big)$ and set:
$$E_{1,z} := E_{j |U_z} + \tilde{H}_{j_{|U_z}} \, .$$

\item[$\circ$] Then considering a partition of unity $(\theta_z)$ associated to the covering $(U_z)$, we set the vector field:
$$\tilde{E}_{t,j,1} := \sum \theta_z E_{1,z}$$
in $\ell_t^{-1}(\mathring{D}_s \cap \mathring{V}_j) \cap \mathring{B}_r$, which is continuous, rugose outside the point $x_j(t)$ (and therefore in $W_{j,1} \backslash P_j$), tangent to the strata of $\mathcal{S}$, non-zero outside $P_j$ and zero on $P_j \cap \big( \ell_t^{-1}(\mathring{D}_s \cap \mathring{V}_j) \big) \cap \mathring{B}_r$.

\item[$\circ$] Notice that if $z \in \ell_t^{-1}(\mathring{D}_s \backslash D_s^+) \cap \mathring{B}_r$, its orbit by $\tilde{E}_{t,j,1}$ has $\{x_j(t)\}$ as limit point, and the orbit by $\tilde{E}_{t,j,1}$ of a point $z \in \ell_t^{-1}(D_s^+) \cap \mathring{B}_r$ has its limit point in $P_j$.
\end{itemize}

\medskip
\item[$(2)$] \underline{Construction of $\tilde{E}_{t,j,2}$}:

\medskip
Consider a smooth non-zero vector field $\hat{\omega}_j$ in $\mathring{V}_j \cap \mathring{D}_s$ as Figure \ref{fig12} and such that, for any $u \in \mathring{V}_j \cap \mathring{D}_s$ one has the following implication:
$$\lambda \tilde{\omega}(u) + \mu \hat{\omega}_j(u) =0, \ \ \lambda \geq0, \ \ \mu \geq0 \implies \lambda=\mu=0 \, ,$$
where $\tilde{\omega}$ is the vector field defined above.

\begin{figure}[!h] 
\centering 
\includegraphics[scale=0.7]{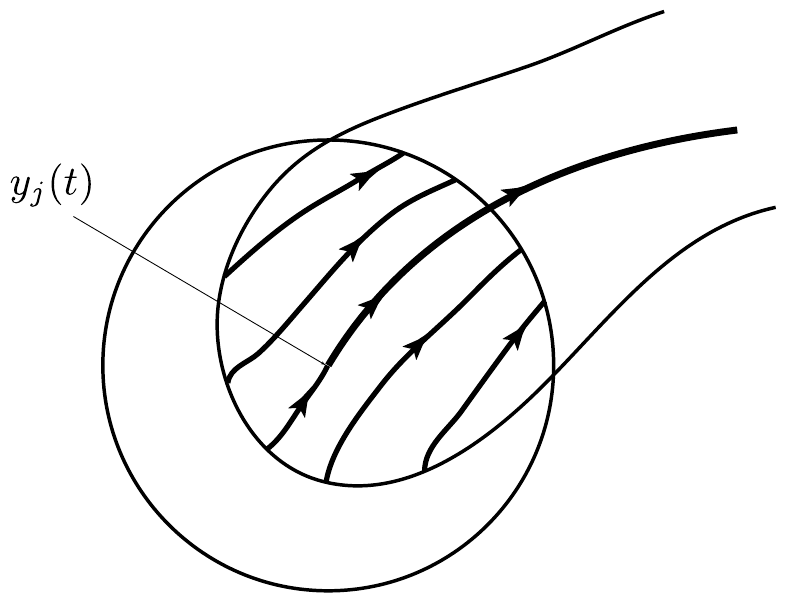}
\caption{}
\label{fig12}
\end{figure}

\medskip
Then $\tilde{E}_{t,j,2}$ is the lift of $\hat{\omega}_j$ in $W_{j,2}$, which is rugose and tangent to the strata of $\mathcal{S}$ and of $\ell_t^{-1}(\mathring{D}_s) \cap S_r$.

\medskip
\item[$(3)$] \underline{Construction of $\tilde{E}_{t,j,3}$}:

\medskip
We set $\tilde{E}_{t,j,3}$ to be the restriction of the vector field $\vartheta_1$ constructed above to $W_{j,3}$.

\medskip
\item[$(4)$] \underline{Construction of $\tilde{E}_{t,j,4}$}:

\medskip
Over $W_{j,4}'$, consider the continuous vector field $E_4'$ obtained by the restriction of $E$ to $\ell_t^{-1}(D_s^+) \cap \mathring{B}_r$ transported by the action of $\vartheta_1$. It is rugose and tangent to the strata of $\mathcal{S}$.

\medskip
Over $W_{j,4} = W_{j,4}' \cup W_{j,1}$, the vector field $E_4'$ glues with $\tilde{I}_{j,1}$, resulting a vector field $\tilde{E}_{t,j,4}$, which is continuous, rugose and non-zero on $W_{j,4} \backslash P_t$. The orbits of the points of $W_{j,4}$ by $\tilde{E}_{t,j,4}$ has limit points in $P_t$.

\end{itemize}

%%%
\medskip
\subsection{Fifth step: gluing all the vector fields to obtain $E_t$}
\ \\

Now, considering $\psi_2$, $\psi_3$ and $\psi_4$ a partition of unity associated to $W_{j,2}$, $W_{j,3}$ and $W_{j,4}$, we obtain the vector field:
$$\tilde{E}_{t,j} := \psi_2 \tilde{E}_{t,j,2} + \psi_3 \tilde{E}_{t,j,3} + \psi_4 \tilde{E}_{t,j,4}$$
in $\ell_t^{-1}(\mathring{V}_j)$, which is continuous, rugose, non-zero on $\ell_t^{-1}(V_j) \setminus P_t$ and zero on $P_t$.

\medskip
Gluing these vector fields $\tilde{E}_{t,j}$, for $j=1, \dots, k$, we get the vector field $\tilde{E}_{t}$. 

\medskip
Finally, gluing the vector field $\tilde{E}_{t}$ in $\ell_t^{-1}(V)$ and the vector field $\tilde{\tilde{E}}_t$ in $\ell_t^{-1}(D_t \backslash V')$ constructed in Subsection 5.1, we obtain a continuous vector field $E_t$ in $X_t$ with the properties $(i)$ to $(v)$ of Proposition \ref{prop_Et}. We just have to check that the orbits of this vector field have a limit point when the parameter goes to the infinity:

\medskip
\begin{itemize}
\item[$(a)$] If $z \in \ell_t^{-1}(D_t \backslash V')$, the orbit of $z$ arrives to $W_{j,2} \cup W_{j,3} \cup W_{j,4}$ after a finite time.
\item[$(b)$] If $z \in W_{j,2}$, the orbit of $z$ arrives to $W_{j,3} \cup W_{j,4}$ after a finite time.
\item[$(c)$] If $z \in W_{j,3} \backslash W_{j,4}$, it has a limit point on $\cup_{j=1}^k C_j$.
\item[$(d)$] If $z \in W_{j,4} \backslash W_{j,3}$, it has a limit point on $P_t'$.
\item[$(e)$] If $z \in W_{j,3} \cap W_{j,4}$, we have that the orbit passing through $z$ has a limit point that is the limit point by $\vartheta_1$ of the limit point of the orbit of $z$ by $E_4'$. Hence this limit point is on $P_j' = P_t' \cap \overline{C}_j$.
\end{itemize}

%%%%%%%%%%%%%%%%%%%%%%%%%%%%%%%%%%%%
\vspace{0.5cm}

\end{document}